\newcommand{\bigboxtimes}{\mathop{\scalerel*{\boxtimes}{\sum}}}
\theoremstyle{definition}
\newtheorem{theorem}{Theorem}[section]
\newtheorem{remark}[theorem]{Remark}
\newtheorem{example}[theorem]{Example}
\newtheorem{proposition}[theorem]{Proposition}
\newtheorem{lemma}[theorem]{Lemma}
\newtheorem{corollary}[theorem]{Corollary}
\numberwithin{equation}{section}
\title{Comparing dg category models for path spaces via $A_\infty$-functors}
  \author[M.~Rivera]{Manuel~Rivera}
  \address{Manuel Rivera,
  Department of Mathematics, Purdue University, 150 N University St, West Lafayette, Indiana, 47907 }
  \email{manuelr@purdue.edu}
  \author[Y.~Wang]{Yi~Wang}
  \address{Yi Wang,
  Department of Mathematics, Purdue University, 150 N University St, West Lafayette, Indiana, 47907 }
  \email{wang6206@purdue.edu}
\begin{document}
\begin{abstract}
We construct a many-object dual version of Chen’s iterated integral map. For any topological space $X$, the construction takes the form of an $A_\infty$-functor between two dg categories whose objects are the points of $X$: the domain has as morphisms the singular (cubical) chains on the space of (Moore) paths in $X$ and the codomain has morphisms arising by totalizing a cosimplicial chain complex determined by the dg coalgebra of singular (simplicial) chains in $X$. When $X$ is simply connected, we show this construction defines a homotopy inverse to a classical map of Adams, which sends ordered sequences of singular simplices in $X$ linked by shared vertices to cubes of paths in $X$. When $X$ is not necessarily simply connected, following an idea of Irie, we incorporate the fundamental groupoid of $X$ into the construction and deduce analogous results. Along the way, we provide an elementary and new proof of the fact that the (direct-sum) cobar construction of the chains in $X$, suitably interpreted, models the dg category of paths in $X$, an extension of Adams’s cobar theorem established by Rivera-Zeinalian using different methods.
\end{abstract}

\maketitle
\section{Introduction}\label{section: introduction}
Denote by 
\[ \mathsf{P} \colon \mathsf{Top} \to \mathsf{Cat}_{\mathsf{Top}}\]
the functor that associates to any topological space\footnote{Throughout this paper, topological spaces are always assumed to be locally path-connected and  semilocally simply connected.} $X$ the topologically enriched category $\mathsf{P}X$ defined as follows. The objects of $\mathsf{P}X$ are the points of $X$ and, for any two $a,b \in X$, the morphism space $\mathsf{P}X(a,b)$ is the set of all pairs $(\gamma, T)$, where $T \in \mathbb{R}_{\geq 0}$ and $\gamma \colon [0,T] \to X$ is a continuous path with $\gamma(0)=a$ and $\gamma(T)=b$, equipped with the compact-open topology. The composition rule on $\mathsf{P}X$ is defined by concatenating paths and adding the corresponding parameters. The identity morphism at an object $b \in X$ is given by the constant path $\gamma \colon \{0\} =[0,0] \to X$, $\gamma(0)=b$. The topological monoid $\mathsf{P}X(b,b)$ is the space $\Omega_bX$ of Moore loops in $X$ based at $b$. 

For a fixed commutative ring with unit $R$, denote by 
\[\mathsf{C}^{\square} \colon \mathsf{Cat}_{\mathsf{Top}} \to \mathsf{dgCat}_R\] 
the functor that associates to any topologically enriched category the differential graded (dg) $R$-category obtained by applying the normalized cubical singular chains with coefficients in $R$ at the level of spaces of morphisms. Similarly, denote by $\mathsf{C}^{\mathbb{\Delta}}$ the functor that applies normalized simplicial singular chains instead. 
The main purpose of this article is to compare the functor 
\begin{equation}\label{equation: cube compose path}
    \mathsf{C}^{\square} \circ \mathsf{P} \colon \mathsf{Top} \to \mathsf{dgCat}_R 
\end{equation}
and the following different versions of the \textit{cobar construction}.

\begin{enumerate}[leftmargin=*, label=(\arabic*), ref=(\arabic*)]

\item \label{item: version of cobar - classical} \textit{The classical cobar construction.} This is a functor
\[
\mathsf{Cobar}\colon \mathsf{dgCoalg}_R\to\mathsf{dgAlg}_R
\]
which takes a dg  coaugmented (coassociative) $R$-coalgebra $C$ as input and produces a dg augmented (associative) $R$-algebra $\mathsf{Cobar}(C)$. The underlying graded algebra of $\mathsf{Cobar}(C)$ is the free algebra
$T(s^{-1}\overline{C})= \bigoplus_{n=0}^{\infty} (s^{-1}\overline{C})^{\otimes n}$
where $\overline{C}$ is the cokernel of the coaugmentation of $C$ and $s^{-1}$ denotes the shift functor, i.e., $(s^{-1}\overline{C})_i= \overline{C}_{i+1}$. The differential of $\mathsf{Cobar}(C)$ is induced by the sum $\partial + \Delta$, where $\partial \colon C \to C$ is the differential of $C$ and $\Delta \colon C \to C \otimes C$ is the coassociative coproduct of $C$. The functor $\mathsf{Cobar}$ is a left adjoint with right adjoint being the classical bar construction $\mathsf{Bar}$. The main example we apply this construction to is the dg coaugmented coalgebra of singular chains on a pointed space $(X,b)$. 

\item \label{item: version of cobar - many} (See \cite{R, HL}). \textit{A ``many object" version of the cobar construction.} This is a functor  
\[
\mathsf{Cobar}^{\boxtimes}\colon \mathsf{cCoalg}_R\to \mathsf{dgCat}_R
\]
that takes a \textit{categorical $R$-coalgebra} $\mathcal{C}$ to a dg category $\mathsf{Cobar}^{\boxtimes}(\mathcal{C})$. Three features of a categorical coalgebra $\mathcal{C}$ are (i) $\mathcal{C}$ is a non-negatively graded $R$-coalgebra such that $\mathcal{C}_0=R[\mathcal{S}]$ for some set $\mathcal{S}$, (ii) the ``differential'' $\partial$ on $\mathcal{C}$ does not square to zero but its failure is controlled by a ``curvature" term, and (iii) $\mathcal{C}$ has a compatible $\mathcal{C}_0$-bicomodule structure. The set of objects in $\mathsf{Cobar}^{\boxtimes}(\mathcal{C})$ is $\mathcal{S}$. For $a,b\in \mathcal{S}$, the definition of $\mathsf{Cobar}^{\boxtimes}(\mathcal{C})(a,b)$ is recalled in section \ref{section: different versions cobar}; here we just point that the generators are ordered sequences of elements of $\mathcal{C}$ ``connecting" $a$ and $b$ (expressed as monomials over the cotensor product $\boxtimes$ over $\mathcal{C}_0$) and the differential on $\mathsf{Cobar}^{\boxtimes}(\mathcal{C})(a,b)$ is induced by $\partial+\Delta+h$ where $h$ is the curvature of $\mathcal{C}$. A natural example of a categorical coalgebra may be obtained for any topological space $X$ with $\mathcal{S}$ being the underlying set of $X$ by slightly modifying the dg coalgebra of singular chains on $X$ giving a functor \[\mathcal{C}^{\mathbb{\Delta}}\colon \mathsf{Top}\to\mathsf{cCoalg}_R.\] For simplicity, we write $\mathcal{C}^{\mathbb{\Delta}}(X)$ as $\mathcal{C}(X)$.

\item \label{item: version of cobar - many prod} \textit{A cosimplicial totalized ``many-object" version of the cobar construction.} This is a functor
\[
\mathsf{Cobar}^{\prod}\colon \mathsf{dgCoalg}_R^{\text{obj}} \to \mathsf{dgCat}_R
\]
where the input is a dg $R$-coalgebra $C$ such that $C_0=R[S]$ for some set of cycles $S$, which we call ``objects". The set of objects in $\mathsf{Cobar}^{\prod}(C)$ is $S$, and for $a,b\in S$, $\mathsf{Cobar}^{\prod}(C)(a,b)$ is defined as the totalization of certain cosimplicial chain complex defined by the coalgebra structure of $C$ (and hence the direct product symbol in the notation); see Section \ref{section: different versions cobar} for more details. An example of such a $C$ is obtained from a topological space $X$ with $S$ being the underlying set of $X$ and $C$ being the dg coalgebra of singular chains on $X$; this gives a functor \[C^{\mathbb{\Delta}}\colon \mathsf{Top}\to\mathsf{dgCoalg}_R^{\text{obj}},\] and, for simplicity, we may write $C^{\mathbb{\Delta}}(X)$ as $C(X)$.
\end{enumerate}

The ``many object'' versions \ref{item: version of cobar - many}\ref{item: version of cobar - many prod} of the cobar construction lead to functors
\begin{align}\label{equation: functor cobar box top}
\mathsf{Cobar}^{\boxtimes}\circ\mathcal{C}^{\mathbb{\Delta}}\colon \mathsf{Top}\to\mathsf{dgCat}_R,\\
\label{equation: functor cobar otimes top}
\mathsf{Cobar}^{\prod}\circ C^{\mathbb{\Delta}}\colon \mathsf{Top}\to\mathsf{dgCat}_R.
\end{align}
In this article, we establish an explicit relationship between the functors \eqref{equation: cube compose path}, \eqref{equation: functor cobar box top}, and \eqref{equation: functor cobar otimes top}.

First, a classical construction of Adams' gives a natural transformation
\[
\mathcal{A}\colon \mathsf{Cobar}^{\boxtimes}\circ\mathcal{C}^{\mathbb{\Delta}}\Rightarrow \mathsf{C}^{\square}\circ\mathsf{P}.
\]
The idea behind the construction of $\mathcal{A}$ is to associate to any ordered sequence of simplices in $X$ linked by a shared vertex (a ``necklace" in $X$) a suitable cube of paths connecting the first and last vertices in the sequence.
\begin{remark}
    Adams \cite{Adams} originally worked with $\mathsf{Cobar}(C^1(X,b))$, where $b \in X$ is a base point and $C^1(X,b)$ is the dg coaugmented coalgebra of 1-reduced singular chains on $(X,b)$. This construction naturally extends to $\mathsf{Cobar}^{\boxtimes}(\mathcal{C}^\mathbb{\Delta}(X))$. 
\end{remark}

Next, for any topological space $X$, a variation (or ``pre-dual" version) of Chen's iterated integral map gives natural chain maps
\begin{equation}\label{equation: I_1}
It_X \colon \mathsf{C}^{\square}(\mathsf{P}X)(a,b) \to \mathsf{Cobar}^{\prod}(C(X))(a,b),
\end{equation}
for all $a,b \in X$, induced by evaluating each path in $\mathsf{P}X(a,b)$ at all ordered sequences of $n$ marked points (which are determined by points in the $n$-simplex), for all $n\ge 0$;  see Section \ref{Section: A formal dual of Chen's iterated integral map} for more details.

The maps \ref{equation: I_1} all together do not define a (strict) functor of dg categories $\mathsf{C}^{\square}(\mathsf{P}X) \to  \mathsf{Cobar}^{\prod}(C(X))$ since compositions are not preserved. However, we have the following statement, which is the first main result of this article.

\begin{theorem}\label{theorem: A infinity map I}
The natural maps in \eqref{equation: I_1}
can be extended to a natural $A_\infty$-transformation 
\[
\mathcal{I} = \{\mathcal{I}_n\}_{n \ge 1}\colon \mathsf{C}^{\square}\circ\mathsf{P} \Rightarrow \mathsf{Cobar}^{\prod}\circ C^{\mathbb{\Delta}},
\]
in the sense that for any topological space $X$,
\[
\mathcal{I}_X=\{\mathcal{I}_{X,n}\}_{n\ge1}\colon \mathsf{C}^{\square}(\mathsf{P}X)\to \mathsf{Cobar}^{\prod}(C(X))
\]
is an $A_\infty$-functor with $\mathcal{I}_{X,1} = It_X$, and the family $\{\mathcal{I}_X\}_{X \in \mathsf{Top}}$ is natural in $X$.
\end{theorem}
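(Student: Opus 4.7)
The strategy is to extend $It_X = \mathcal{I}_{X,1}$ to higher components $\mathcal{I}_{X,n}$ by a Chen-type evaluation on an augmented parameter space, and to derive the $A_\infty$-functor identities from a Stokes computation on the boundary of that space. Given $n$ composable cubical chains $\sigma_i \colon I^{k_i} \to \mathsf{P}X(a_{i-1}, a_i)$, I would define $\mathcal{I}_{X,n}(\sigma_1, \dots, \sigma_n)$ by forming the concatenated Moore-path cube $\sigma_1 * \cdots * \sigma_n \colon I^{k_1 + \cdots + k_n} \to \mathsf{P}X(a_0, a_n)$, which carries canonical ``splitting times'' $\tau_1 = T_1, \dots, \tau_{n-1} = T_1 + \cdots + T_{n-1}$ separating the sub-path families. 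For each $m \ge 0$, the level-$m$ component in the totalized cobar target is produced by evaluating the concatenated path at $m$ ordered marked times $t_1 \le \cdots \le t_m$, with an auxiliary $(n-1)$-dimensional simplex of parameters governing the positions of the splitting times relative to the $t_i$. The result is a chain in $X^{\times m}$ parameterized by $I^{k_1 + \cdots + k_n} \times \Delta^m \times \Delta^{n-1}$; for $n=1$ the auxiliary simplex is a point and one recovers $It_X$.

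Since source and target are both strict dg categories, the $A_\infty$-functor identity to verify reads, up to signs, $\partial \mathcal{I}_n(\sigma_1, \dots, \sigma_n) + \sum_{k=1}^{n-1} \mathcal{I}_k(\sigma_1, \dots, \sigma_k) \cdot \mathcal{I}_{n-k}(\sigma_{k+1}, \dots, \sigma_n) = \sum_i \mathcal{I}_n(\sigma_1, \dots, \partial\sigma_i, \dots, \sigma_n) + \sum_i \mathcal{I}_{n-1}(\sigma_1, \dots, \sigma_i \cdot \sigma_{i+1}, \dots, \sigma_n)$. I would obtain this by analyzing the codimension-one boundary of $I^{k_1 + \cdots + k_n} \times \Delta^m \times \Delta^{n-1}$: the cube boundaries contribute the input-differential terms; the simplex boundary of $\Delta^m$, together with the cosimplicial coface maps built from the coproduct of $C(X)$, reassembles as the output differential in the totalization; the collision of two adjacent splitting times on an interior face of $\Delta^{n-1}$ is geometrically the same as a single splitting time, which on the level of paths records the concatenation $\sigma_i \cdot \sigma_{i+1}$ and thus yields the $\mathcal{I}_{n-1}$ terms; finally, the extreme faces of $\Delta^{n-1}$, where a splitting time is pushed past all marked times on one side, split the cobar word into two factors and produce the $\mathcal{I}_k \cdot \mathcal{I}_{n-k}$ terms. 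Naturality in $X$ is automatic, since the whole construction uses only concatenation of paths, evaluation at marked times, and the induced map of singular chain complexes, all of which commute with continuous maps $f \colon X \to Y$.

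The main obstacles will be (i) formalizing the evaluation so that it outputs a well-defined element of the totalization $\mathsf{Cobar}^{\prod}(C(X))(a_0, a_n)$, which is an infinite-sum object; in particular, one must accommodate the Moore parameter $T$ varying with the input path and verify that the resulting sum over $m$ defines a coherent element in the totalization filtration; and (ii) pinning down the signs in the Stokes computation to match the standard $A_\infty$-functor conventions. Issue (ii) is bookkeeping and reduces to fixing orientations on the cubes, on $\Delta^m$, and on $\Delta^{n-1}$; issue (i) is the conceptual core, where the interplay between the cubical chains on path spaces and the totalized cosimplicial structure of $\mathsf{Cobar}^{\prod}$ must be handled carefully, and is where I expect the bulk of the technical work to lie.
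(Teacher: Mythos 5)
Your overall shape — extend $It_X$ by evaluating concatenated Moore-path cubes at marked times, with extra parameters controlling the breaks, and derive the $A_\infty$-identity from a Stokes analysis — is exactly the strategy the paper pursues. But there are two genuine gaps, and you have inverted which obstacle is the hard one.

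First, the ``auxiliary $\Delta^{n-1}$ governing the splitting times'' is not a well-posed parameter space. You state that the splitting times are canonical (determined by the sub-path lengths $T_i$), so they carry no free $(n-1)$-dimensional degree of freedom; and if instead you normalize everything to $[0,1]$ and let them vary, the resulting map $\mathsf{P}X(a,b)\times\Delta^{n_1}\times\mathsf{P}X(b,c)\times\Delta^{n_2}\to\mathsf{P}X(a,c)\times\Delta^{n_1+n_2}$ is undefined precisely at zero-length Moore paths (one must rescale marked times to the subinterval, and the scale factor $T_i$ can vanish). This is not a bookkeeping complication the ``sum over $m$'' absorbs; it is the reason no explicit closed-form construction works. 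The paper resolves it by replacing $\mathsf{P}X(a,b)\times\Delta^n$ with the cosimplicial space $\mathsf{P}^nX(a,b)$ of marked paths carrying absolute times in $[0,T]$, and replacing $\Delta^n$ by the scaled simplices $\Delta^n_T$, so that the $T\to 0$ limit collapses gracefully rather than blowing up. The required system of chains $\xi^k\langle\lambda_0\mid\dots\mid\lambda_k\rangle$ satisfying the coherence \eqref{equation: xi^k relation} is then produced by acyclic models (contracting $\Delta^n_T$ to the vertex $\Delta^n_0$), exactly as Adams constructs $\theta_n$, not by an explicit geometric formula of the sort you sketch.

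Second, the sign bookkeeping you relegate to obstacle (ii) hides a substantive obstruction. If one works with cubical chains on the target, the boundary computation does \emph{not} close: the cube-coordinate factors in $\mathcal{J}'_{X,1}(\sigma_1)\circ\mathcal{J}'_{X,1}(\sigma_2)$ appear in the order $I^{d_1}\times I^{n_1}\times I^{d_2}\times I^{n_2}$, while the corresponding boundary faces of $\mathcal{J}'_{X,2}$ appear in the order $I^{d_1}\times I^{d_2}\times I^{n_1}\times I^{n_2}$, and these differ by a nontrivial coordinate transposition. The paper handles this by composing with the cubical-to-simplicial natural transformation $\eta^{\square,\mathbb{\Delta}}$, under which the transposed terms cancel; the result $\mathcal{J}_{X,k}=\eta^{\square,\mathbb{\Delta}}\circ\mathcal{J}'_{X,k}$ is an $A_\infty$-functor, whereas $\mathcal{J}'_{X,k}$ itself is not. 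Your parameter space $\Delta^m\times\Delta^{n-1}$ does not automatically sidestep this: the composition $\mathcal{I}_k\cdot\mathcal{I}_{n-k}$ in the target requires an Eilenberg--Zilber-type shuffle to re-glue the simplices, and the same transposition discrepancy reappears there. You should flag this as a real analytic issue, not assume it is settled by choosing orientations.

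Your naturality claim, by contrast, is correct and the reasoning you give is exactly the paper's: the construction commutes with pushforward by continuous maps.
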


\begin{remark}
    Chen \cite{Chen} originally worked with cochains (differential forms) on what he called differentiable spaces, which generalize smooth manifolds. In contrast, we work with singular chains on topological spaces, leading to the map $\mathcal{I}_1$ which is formally dual to Chen's. 
    The higher components $\mathcal{I}_n$ ($n>1$) can be viewed geometrically as arising from subdivisions of simplices compatible with path concatenation. These higher homotopies disappear after pairing with differential forms via integration and are therefore not seen in Chen's framework (see Remark~\ref{remark: I formal dual to Chen} and the subsequent discussion for details).

\end{remark}

A natural transformation between two functors $\mathsf{Top}\to \mathsf{dgCat}_R$ is a natural $A_\infty$-transformation with vanishing higher components. Hence, given the natural $A_\infty$-transformations $\mathcal{A}$ and $\mathcal{I}$, their composition $\mathcal{I}\circ\mathcal{A}$ is a natural $A_\infty$-transformation
\[
\mathcal{F}=\{\mathcal{F}_n\}_{n\geq1}=\{\mathcal{I}_n\circ\mathcal{A}\}_{n\geq1}\colon \mathsf{Cobar}^{\boxtimes}\circ\mathcal{C}^{\mathbb{\Delta}}\Rightarrow \mathsf{Cobar}^{\prod}\circ C^{\mathbb{\Delta}}.
\]
On the other hand, there is a straightforwardly defined natural transformation \[ \mathcal{G}\colon \mathsf{Cobar}^{\boxtimes}\circ\mathcal{C}^{\mathbb{\Delta}}\Rightarrow \mathsf{Cobar}^{\prod}\circ C^{\mathbb{\Delta}} \] that, roughly speaking, is induced by the inclusion of a direct sum into a direct product (modulo the subtle difference between $\mathcal{C}^{\mathbb{\Delta}}$ and $C^{\mathbb{\Delta}}$). The following is the second main result of the present article.

\begin{theorem}\label{theorem: A infinity homotopy}

The natural $A_\infty$-transformations $\mathcal{F}$ and $\mathcal{G}$ are $A_\infty$-homotopic, i.e., for any topological space $X$, there exists a natural $A_\infty$-homotopy $\mathcal{H}_X$ between the $A_\infty$-functors $\mathcal{F}_X$ and $\mathcal{G}_X$.
\end{theorem}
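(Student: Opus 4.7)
The plan is to construct the $A_\infty$-homotopy $\mathcal{H}_X=\{\mathcal{H}_{X,n}\}_{n\ge 1}$ explicitly as a family of natural maps
\[
\mathcal{H}_{X,n}\colon \mathsf{Cobar}^{\boxtimes}(\mathcal{C}(X))(a_0,a_1)\otimes\cdots\otimes\mathsf{Cobar}^{\boxtimes}(\mathcal{C}(X))(a_{n-1},a_n)\to\mathsf{Cobar}^{\prod}(C(X))(a_0,a_n)
\]
of appropriate degree, one for each tuple of composable objects in $X$. Since $\mathcal{F}$ and $\mathcal{G}$ are both natural in $X$ and agree on objects, naturality reduces the construction to specifying $\mathcal{H}_n$ on universal necklaces of standard simplices; an affine/convex prescription there will then extend naturally to all of $\mathsf{Top}$.

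The model case is $n=1$ on a single simplex $\sigma\colon\Delta^m\to X$. Here $\mathcal{A}(\sigma)$ is Adams' cube $\theta_\sigma\colon[0,1]^{m-1}\to \mathsf{P}X(\sigma(0),\sigma(m))$ of concatenated edge-paths of $\sigma$, and $\mathcal{F}_1(\sigma)=It_X(\theta_\sigma)$ records the evaluations of $\theta_\sigma$ at ordered tuples of marked times, while $\mathcal{G}(\sigma)$ places $\sigma$ itself in cosimplicial degree $m$ of the totalization (up to vertex/curvature corrections). I would define $\mathcal{H}_1(\sigma)$ as the integration of a single ``prism'' $\Delta^m\times[0,1]^{m-1}\to X$ built from the convex structure of $\Delta^m$, whose two extreme $[0,1]^{m-1}$-faces recover the marked-point restriction of $\sigma$ and the Adams evaluation, respectively. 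For $n\ge 2$, $\mathcal{H}_n(\sigma_1|\cdots|\sigma_n)$ is defined by the analogous prismatic interpolation performed simultaneously across the entire necklace, respecting the fact that $\mathcal{A}$ concatenates cubes between beads while $\mathcal{G}$ merely juxtaposes in the cotensor product.

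Verifying the $A_\infty$-homotopy relations
\[
\partial\mathcal{H}_n + \sum \pm\,\mathcal{H}_i(\text{inner splitting}) + \sum \pm\,\mathcal{F}_i\cdot\mathcal{H}_{n-i} + \sum \pm\,\mathcal{H}_i\cdot\mathcal{G}_{n-i}=\mathcal{F}_n-\mathcal{G}_n
\]
then amounts to a Stokes-type identification: each codimension-one face of the prismatic model cell contributes exactly one term on one side of the equation. The explicit formulas for $\mathcal{I}_n$ from Theorem~\ref{theorem: A infinity map I} together with $\mathcal{A}$ being edge-path concatenation ensure that evaluation at cosimplicial times commutes with the face operators of the prism, so the verification is local to each bead and to each face of $\Delta^m\times[0,1]^{m-1}$.

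The main obstacle will be the sign combinatorics of the prismatic subdivisions for $n\ge 2$, together with accommodating the structural mismatch between $\mathcal{C}^{\mathbb{\Delta}}$ (with its curvature term $h$ and bicomodule structure) and the plain dg coalgebra $C^{\mathbb{\Delta}}$: the $h$-contribution to the differential on $\mathsf{Cobar}^{\boxtimes}$ must be absorbed by correction terms in $\mathcal{H}_n$ on generators involving constant or vertex simplices. If the explicit verification becomes unwieldy, a cleaner fallback is an acyclic-models induction: since $\mathsf{Cobar}^{\prod}(C(\Delta^m))(v_0,v_m)$ should compute the chains on the contractible space $\mathsf{P}\Delta^m(v_0,v_m)$, this universal target is acyclic in positive degrees and permits inductive production of higher homotopies $\mathcal{H}_n$ extending any chosen $\mathcal{H}_1$.
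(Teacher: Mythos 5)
Your primary proposal---the ``prismatic interpolation'' map $\Delta^m\times[0,1]^{m-1}\to X$---is not developed to the point where it could be checked. You never say what the prism actually is, how it handles the fact that $\mathcal{F}_1(\sigma)$ and $\mathcal{G}(\sigma)$ live in an infinite direct product $\mathsf{Cobar}^{\prod}$, or how the boundary faces decompose into the four families of terms in the $A_\infty$-homotopy relation. As stated it reads as a heuristic rather than a construction, and I would not count it as a proof.

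Your fallback, the acyclic-models induction, is essentially the method the paper uses, and the key fact you cite---that the target over a universal necklace is acyclic because the necklace is contractible---is correct and is the content of the paper's Lemma~\ref{lemma: pr_0 quasi-isom} (that $\mathrm{pr}_0\colon\mathsf{Cobar}^{\prod}(C(X))(a,b)\to R$ is a quasi-isomorphism for contractible $X$). But there are two gaps you have glossed over. First, the induction is not merely ``produce $\mathcal{H}_n$ extending any chosen $\mathcal{H}_1$'': $\mathcal{H}_1$ itself has to be built inductively, and the correct scheme is a double induction on pairs $(k,d)$ (number of tensor factors and total internal degree, ordered lexicographically), because the relation for $\mathcal{H}_{X,k}$ in degree $d$ uses $\mathcal{H}_{X,k}$ applied to $D^{\boxtimes}$-images, which have degree $d-1$. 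Second, and more seriously, ``acyclic in positive degrees'' is not enough: the obstruction cycle $\Psi^{\vec{\boldsymbol{d}}}$ lives in degree $k-1+\sum_{i,j}(d_{i,j}-1)$, which is zero precisely when $k=1$ and every bead is a $1$-simplex. In that case the homology of the target is $R\neq 0$, so one cannot conclude $[\Psi]=0$ for free; the paper handles this by computing $(\mathrm{pr}_0)_*[\Psi^{1,\dots,1}]$ explicitly and using unitality of $\mathcal{F}_X$ and $\mathcal{G}_X$ (they both send $1$-reduced generators to classes projecting to $1_R$). Your proposal does not address this degree-zero obstruction at all, and without it the induction does not start correctly.

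A smaller omission: you do not say how the ``$h$-contribution'' you flag is actually absorbed. The paper's mechanism is simply to declare $\mathcal{H}_{X,k}$ zero on inputs that are multiples of identity morphisms and to use the strict unitality of $\mathcal{F}_X$ (Remark~\ref{remark: unital A infinity functor}) and of $\mathcal{G}_X$; the curvature corrections then cancel automatically. That is a concrete piece of bookkeeping, not something that falls out of the prism picture, and you should make it explicit.
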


\begin{remark}\label{remark: Chen Adams inverse simply connected}
    Suppose $X$ is simply connected, and fix a basepoint $b\in X$. If one restricts to the dg coalgebra of 1-reduced singular chains $ C^1(X,b)$, then $\mathcal{G}_X$ identifies $\mathsf{Cobar}^{\boxtimes}$ with the conormalization of $\mathsf{Cobar}^{\prod}$.
    Thus, Theorem \ref{theorem: A infinity homotopy} implies:
    \begin{quote}
    \emph{Chen's iterated integral map, suitably extended and reinterpreted, is a left $A_\infty$-homotopy inverse of Adams' map for simply connected topological spaces.}
    \end{quote}
    See Section \ref{subsection: Chen Adams inverse simply connected} for a more detailed statement.
\end{remark}

Recall that a functor $\mathsf{C}_{1}\to \mathsf{C}_{2}$ between dg categories is called a 
\emph{quasi-equivalence} if it induces quasi-isomorphisms on all morphism complexes and an 
equivalence $H_{0}(\mathsf{C}_{1})\simeq H_{0}(\mathsf{C}_{2})$. 
The same notion applies to $A_{\infty}$-categories and $A_{\infty}$-functors, 
where it is called an \emph{$A_{\infty}$-quasi-equivalence}.  Building on Remark~\ref{remark: Chen Adams inverse simply connected}, we then obtain:

\begin{corollary}\label{corollary: Adams equiv iff Chen equiv}
    For any simply connected topological space $X$, the functor \[\mathcal{A}_X \colon \mathsf{Cobar}^{\boxtimes}(\mathcal{C}(X))\to \mathsf{C}^{\square}(\mathsf{P}X)\] induced by Adams' map is a quasi-equivalence if and only if the $A_\infty$-functor \[\mathcal{I}_X \colon \mathsf{C}^{\square}(\mathsf{P}X)\to\mathsf{Cobar}^{\prod}(C(X))\] motivated by Chen's iterated integral map is an $A_\infty$-quasi-equivalence.
\end{corollary}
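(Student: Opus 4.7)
The plan is to deduce the corollary from Theorem~\ref{theorem: A infinity homotopy} together with Remark~\ref{remark: Chen Adams inverse simply connected} via a two-out-of-three argument. First, by Theorem~\ref{theorem: A infinity homotopy}, the composite $\mathcal{F}_X = \mathcal{I}_X \circ \mathcal{A}_X$ is $A_\infty$-homotopic to the (strict) natural transformation $\mathcal{G}_X$. Since $A_\infty$-homotopic $A_\infty$-functors induce the same quasi-isomorphism class on each morphism complex and the same functor on $H_0$-categories, $\mathcal{F}_X$ is an $A_\infty$-quasi-equivalence if and only if $\mathcal{G}_X$ is a quasi-equivalence.

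The core step is therefore to prove that $\mathcal{G}_X$ is a quasi-equivalence whenever $X$ is simply connected. Fix a basepoint $b \in X$; by Remark~\ref{remark: Chen Adams inverse simply connected}, after restricting to the 1-reduced chain coalgebra $C^1(X,b)$, the map $\mathcal{G}_X$ is identified with the inclusion of the conormalization of a cosimplicial chain complex into its totalization. Because 1-reduction forces the $n$-th cosimplicial level to be concentrated in homological degrees $\geq n$, in each total degree the product totalization collapses to the direct sum, so the conormalization inclusion is a quasi-isomorphism (in fact an isomorphism). Simple connectivity of $X$ then allows this pointed statement to be promoted to the full ``many-object'' $\mathcal{G}_X$: the inclusion $C^1(X,b) \hookrightarrow C(X)$ is a quasi-isomorphism of coaugmented coalgebras inducing quasi-equivalences of the relevant cobar categories, and path-connectedness together with the vanishing of $\pi_1(X)$ permits a Morita-type argument to propagate the resulting quasi-equivalence from the endomorphism complex at $b$ to every pair of objects and to the induced functor on $H_0$-categories.

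The final step is the two-out-of-three conclusion. Since $\mathcal{A}_X$ is a strict dg functor, the linear component of the $A_\infty$-composite is $(\mathcal{I}_X \circ \mathcal{A}_X)_1 = \mathcal{I}_{X,1} \circ \mathcal{A}_X$, and the induced functor on $H_0$-categories factorizes similarly. Knowing that $\mathcal{I}_X \circ \mathcal{A}_X$ is a quasi-equivalence, the standard two-out-of-three property for quasi-isomorphisms of chain complexes and for equivalences of ordinary categories yields the stated ``iff''.

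The main obstacle is the passage from the pointed, 1-reduced formulation of Remark~\ref{remark: Chen Adams inverse simply connected} to the many-object setting of the corollary, along with reconciling the two cobar indexing conventions (direct sum versus direct product, and the subtle distinction between $\mathcal{C}^{\mathbb{\Delta}}$ and $C^{\mathbb{\Delta}}$); once this translation is in place the rest of the argument is formal, but the careful bookkeeping to ensure that the quasi-isomorphism of conormalization into totalization persists object-wise in the many-object dg categories is where the most care will be required.
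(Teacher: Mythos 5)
Your proposal follows the paper's own route: reduce via Theorem~\ref{theorem: A infinity homotopy} to showing that $\mathcal{G}_X$ is a quasi-equivalence, establish this through the 1-reduced conormalization identification of Remark~\ref{remark: Chen Adams inverse simply connected} (elaborated in Section~\ref{subsection: Chen Adams inverse simply connected}), and conclude by two-out-of-three. One slip: your parenthetical ``in fact an isomorphism'' is not right. The conormalization of $\mathsf{Cobar}^{\prod}(C^1(X,b))$ is isomorphic to $\mathsf{Cobar}(C^1(X,b))$ (because $s^{-1}\overline{C^1(X,b)}$ is in positive degrees, so the product over $n$ of the reduced tensor powers equals the sum), but its inclusion into the full totalization $\mathsf{Cobar}^{\prod}(C^1(X,b))$ is a proper inclusion that is only a quasi-isomorphism, per Irie's Lemma~2.5. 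The paper is also more explicit about the promotion to the many-object setting: the inclusion $\mathsf{Cobar}^{\boxtimes}(\mathcal{C}^1(X,b))\hookrightarrow\mathsf{Cobar}^{\boxtimes}(\mathcal{C}(X))$ is a quasi-equivalence by the Section~\ref{section: Adams general} results, and $\mathsf{Cobar}^{\prod}(C^1(X,b))\hookrightarrow\mathsf{Cobar}^{\prod}(C(X))$ by a spectral-sequence comparison; your appeal to a ``Morita-type'' argument is vaguer than this but is the right idea. The two-out-of-three conclusion is valid since all three functors act as the identity on objects, so quasi-equivalence reduces to object-wise quasi-isomorphism of hom complexes.
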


Recall that Adams \cite{Adams} constructed the map $\mathsf{Cobar}(C^1(X,b)) \to C^\square(\Omega_b X)$ and proved it is a quasi-isomorphism for simply connected $X$ by comparing the Serre spectral sequences associated with the path fibration. Chen \cite{Chen} proved that his original iterated integral map, which takes an ordered sequence of differential forms on a manifold $M$ as input and produces a cochain in $\Omega_bM$ as output, is a quasi-isomorphism for simply connected $M$ by pairing with Adams' cobar construction and using spectral sequence arguments. Thus, Corollary~\ref{corollary: Adams equiv iff Chen equiv} highlights the structural feature underlying Chen’s proof and provides a conceptual explanation for why that argument works.

An adaptation of Adams’ original argument yields that, for any simply connected space $X$, the dg functor $\mathcal{A}_X \colon \mathsf{Cobar}^{\boxtimes}(\mathcal{C}(X))\to \mathsf{C}^{\square}(\mathsf{P}X)$ is a quasi equivalence (in fact, we prove a stronger statement in this article, see Theorem~\ref{theorem: Adams} below). Together with Corollary~\ref{corollary: Adams equiv iff Chen equiv}, this immediately implies the following.

\begin{corollary}\label{corollary: Chen is a quasi-equiv}
 If $X$ is simply connected, the $A_\infty$-functor \[\mathcal{I}_X \colon \mathsf{C}^{\square}(\mathsf{P}X)\to\mathsf{Cobar}^{\prod}(C(X))\] is an $A_\infty$-quasi-equivalence.
\end{corollary}

A natural question is whether Corollaries~\ref{corollary: Adams equiv iff Chen equiv} and ~\ref{corollary: Chen is a quasi-equiv} extend to possibly non-simply connected spaces. One may begin to address this by examining whether Adams' map and Chen's iterated integral map remain quasi-isomorphisms when $X$ is not simply connected. In recent years, Adams' theorem has been shown to extend to arbitrary topological spaces \cite{RZ_cubical}. In fact, it turns out that $\mathcal{A}_X$ is a quasi-equivalence even when $X$ is a non-simply connected space. We give an elementary proof (independent of all the above results) of this extension of Adams’ theorem by lifting necklaces to the universal cover and using Adams' original proof for simply connected spaces.

\begin{theorem}\label{theorem: Adams}
    For any topological space $X$, the functor 
    \[ \mathcal{A}_X\colon \mathsf{Cobar}^{\boxtimes}(\mathcal{C}(X))\rightarrow \mathsf{C}^{\square}(\mathsf{P}X)\]
    is a quasi-equivalence of dg categories.
\end{theorem}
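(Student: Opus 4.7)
The plan is to reduce to Adams' classical simply-connected case by lifting to the universal cover. We may assume $X$ is path-connected, as both dg categories decompose over path components. Fix $a, b \in X$, a universal covering $p\colon \widetilde X \to X$, and a lift $\widetilde a$ of $a$. The fiber $p^{-1}(b)$ is canonically in bijection with the set $\pi_1(X; a, b)$ of homotopy classes of paths from $a$ to $b$; write $\widetilde b_g$ for the lift corresponding to $g$.

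The first step is to establish a direct-sum decomposition of both morphism complexes, indexed by $\pi_1(X; a, b)$. By unique path lifting, any necklace $(\sigma_1, \dots, \sigma_k)$ of singular simplices in $X$ connecting $a$ to $b$ lifts, given the initial basepoint $\widetilde a$, to a unique necklace in $\widetilde X$ ending at some $\widetilde b_g$. I would verify that this bijection intertwines the differential $\partial + \Delta + h$ on both sides (in particular that the Alexander--Whitney splitting inherent in $\Delta$ acts consistently on lifted simplices), obtaining an isomorphism of chain complexes
\[
\mathsf{Cobar}^{\boxtimes}(\mathcal{C}(X))(a,b) \;\cong\; \bigoplus_{g \in \pi_1(X;a,b)} \mathsf{Cobar}^{\boxtimes}(\mathcal{C}(\widetilde X))(\widetilde a, \widetilde b_g).
\]
Simultaneously, the Moore path space decomposes as $\mathsf{P}X(a,b) = \bigsqcup_{g} \mathsf{P}\widetilde X(\widetilde a, \widetilde b_g)$, yielding the matching direct-sum decomposition of cubical chains.

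Second, I would check that Adams' map $\mathcal{A}_X$ respects these two decompositions, restricting summand-wise to $\mathcal{A}_{\widetilde X}\colon \mathsf{Cobar}^{\boxtimes}(\mathcal{C}(\widetilde X))(\widetilde a, \widetilde b_g) \to \mathsf{C}^{\square}(\mathsf{P}\widetilde X(\widetilde a, \widetilde b_g))$. This is essentially automatic from the geometric description of $\mathcal{A}$ in terms of path concatenation, since lifts behave well under concatenation. Because $\widetilde X$ is simply connected, $\mathsf{P}\widetilde X(\widetilde a, \widetilde b_g)$ is homotopy equivalent to $\Omega_{\widetilde a}\widetilde X$ via multiplication by any chosen path, and an analogous identification holds on the cobar side, so each summand-wise map is a quasi-isomorphism by Adams' classical theorem, proved via comparison of the Serre spectral sequence of the path-loop fibration with a filtration of the cobar construction.

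Summing over $g$ shows $\mathcal{A}_X$ is a quasi-isomorphism on every morphism complex. Since $\mathcal{A}_X$ is the identity on objects, this automatically implies that the induced functor on $H_0$ is an equivalence, establishing the required quasi-equivalence. The main obstacle I anticipate is step one: verifying that the necklace-lifting isomorphism is a chain map requires checking that each piece of the differential (the singular boundary $\partial$, the Alexander--Whitney coproduct $\Delta$, and the curvature term $h$ coming from degenerate 1-simplices) commutes with the lift, a combinatorial bookkeeping task that must be handled carefully. Once this is in place, the rest of the argument is formal.
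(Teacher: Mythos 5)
Your central strategy is exactly the one the paper uses: decompose both morphism complexes over $\pi_1$ by lifting to the universal cover, check that $\mathcal{A}_X$ respects the decomposition, and invoke the simply connected case summand-wise. The paper's Proposition on $\mathcal{A}_X$ at a single basepoint and the isomorphisms \eqref{equation: cube chain path space direct sum pi_1}, \eqref{equation: Cobar boxtimes direct sum pi_1} are precisely your Step 1; your Step 2 is the observation made after those displays. The only structural difference is that the paper first reduces to $a=b$ via the chain-homotopy equivalences $L_{\gamma_{ab}}$ and $L^{\boxtimes}_{\gamma_{ab}}$, whereas you keep $a,b$ general and decompose over $\pi_1(X;a,b)$; both routes work.

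The one genuine gap is in the last step, where you invoke Adams' classical theorem ``summand-wise.'' Adams' classical result is a quasi-isomorphism $\mathsf{Cobar}(C^1(\widetilde X,\widetilde b)) \to C^{\square}(\Omega_{\widetilde b}\widetilde X)$ -- its source is the cobar of $1$-reduced chains at a single fixed basepoint, not $\mathsf{Cobar}^{\boxtimes}(\mathcal{C}(\widetilde X))(\widetilde a,\widetilde b_g)$. Your phrase ``an analogous identification holds on the cobar side'' is covering over a chain of three distinct facts that the paper supplies explicitly: (i) the translations $L^{\boxtimes}_{\gamma}$ that change morphism objects, which are chain-homotopy equivalences; (ii) the algebra isomorphism $E_{\pm}\colon \mathsf{Cobar}(C^0(\widetilde X,b)) \cong \mathsf{Cobar}^{\boxtimes}(\mathcal{C}^0(\widetilde X,b))(b,b)$ (this is where the curvature and twisted differential of $\mathcal{C}$ get absorbed) together with the deformation retract $\mathcal{C}(\widetilde X)\to\mathcal{C}^0(\widetilde X,b)$ and its extension $\widehat{H}_{\mathcal{O}}$ to a chain homotopy on the cobar level; and (iii) the classical fact that $\mathsf{Cobar}(C^1) \hookrightarrow \mathsf{Cobar}(C^0)$ is a quasi-isomorphism for simply connected spaces. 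Item (ii) in particular is not ``analogous'' to anything on the path side, and items (ii)--(iii) are the content of the paper's Remark~\ref{remark: Adams original} and diagram~\eqref{equation: diagram cobar at b different versions}. You should spell out this reduction rather than treat it as parallel to the path-translation homotopy equivalence; as written, the final summand-wise quasi-isomorphism claim does not follow.
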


\begin{corollary}[\cite{RZ_cubical,rivera2022adams}] \label{corollary: pointed Adams}
For any pointed topological space $(X,b)$, Adams' map \[\mathsf{Cobar}(C^0(X,b)) \rightarrow C^{\square}(\Omega_bX)\] is a quasi-isomorphism of dg algebras.
\end{corollary}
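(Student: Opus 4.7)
The plan is to deduce Corollary~\ref{corollary: pointed Adams} from Theorem~\ref{theorem: Adams} by restricting the quasi-equivalence $\mathcal{A}_X$ to the endomorphism dg algebra at the basepoint $b\in X$. Since a quasi-equivalence of dg categories induces a quasi-isomorphism on every morphism complex, Theorem~\ref{theorem: Adams} immediately yields a quasi-isomorphism of dg algebras
\[
\mathcal{A}_X(b,b)\colon \mathsf{Cobar}^{\boxtimes}(\mathcal{C}(X))(b,b)\ \xrightarrow{\ \simeq\ }\ \mathsf{C}^{\square}(\mathsf{P}X)(b,b)=C^{\square}(\Omega_b X),
\]
whose target is precisely the one appearing in the Corollary. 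It therefore remains to identify the source, up to quasi-isomorphism, with the classical cobar construction $\mathsf{Cobar}(C^{0}(X,b))$ in a way compatible with Adams' original map.

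For this identification, I would compare the many-object cobar at $(b,b)$ with the one-object cobar via the natural map induced by collapsing every $0$-simplex of $X$ to $b$. The basepoint reduction respects the (co)product and differential structures on the two flavors of coalgebra, so viewing $C^{0}(X,b)$ as a categorical coalgebra with a single object and applying $\mathsf{Cobar}^{\boxtimes}$ to the induced quotient $\mathcal{C}(X)\twoheadrightarrow C^{0}(X,b)$ produces a natural dg algebra map
\[
\pi\colon \mathsf{Cobar}^{\boxtimes}(\mathcal{C}(X))(b,b)\ \longrightarrow\ \mathsf{Cobar}(C^{0}(X,b))
\]
sending a necklace $c_1\boxtimes\cdots\boxtimes c_n$ to its collapsed tensor $s^{-1}[c_1]\otimes\cdots\otimes s^{-1}[c_n]$. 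Unwinding the geometric recipe behind Adams' map, which turns a necklace of simplices into a cube of concatenated paths, one verifies that $\pi$ intertwines $\mathcal{A}_X(b,b)$ with the classical Adams' map. Granted that $\pi$ is itself a quasi-isomorphism, two-out-of-three for quasi-isomorphisms then forces the classical map $\mathsf{Cobar}(C^{0}(X,b))\to C^{\square}(\Omega_b X)$ to be a quasi-isomorphism, proving the Corollary.

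The main obstacle is establishing that $\pi$ is a quasi-isomorphism. The source records vertex-matching data along every necklace from $b$ to $b$, while the target has collapsed all intermediate vertices to $b$ and has many more generators coming from simplices whose endpoints are not $b$, so the two complexes differ nontrivially on generators. I would address this by filtering both sides by word length (number of tensor factors) and comparing the associated spectral sequences; the discrepancy is controlled by the behavior of the $0$-chains, which become quasi-isomorphic to $R$ after basepoint reduction, so on each filtration page the comparison should reduce to a standard contraction argument. If this direct route is too delicate, an alternative is to bypass the comparison altogether and re-run the universal cover argument behind Theorem~\ref{theorem: Adams} in the pointed setting, thereby reducing the Corollary to Adams' original theorem for simply connected spaces via $\pi_1(X,b)$-equivariance.
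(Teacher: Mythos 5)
Your primary route has a genuine gap at the construction of the map you call $\pi$. You describe it as ``the natural map induced by collapsing every $0$-simplex of $X$ to $b$,'' coming from a quotient $\mathcal{C}(X)\twoheadrightarrow C^{0}(X,b)$. But $C^{0}(X,b)$ is a \emph{sub}coalgebra of $C(X)$ (it is generated by simplices whose vertices all map to $b$), not a quotient; there is no canonical coalgebra surjection $\mathcal{C}(X)\to\mathcal{C}^{0}(X,b)$ underlying a dg algebra map $\pi$ in the direction you want. The paper resolves this by going the opposite way: it builds a dg algebra \emph{isomorphism} $E_{\pm}$ between $\mathsf{Cobar}(C^{0}(X,b))$ and $\mathsf{Cobar}^{\boxtimes}(\mathcal{C}^{0}(X,b))(b,b)$ (which you do not mention, and which is needed because the latter carries a curvature correction via the $1$-cochain $e$), and then shows that the \emph{inclusion} $\mathsf{Cobar}^{\boxtimes}(\iota_{b})\colon\mathsf{Cobar}^{\boxtimes}(\mathcal{C}^{0}(X,b))(b,b)\hookrightarrow\mathsf{Cobar}^{\boxtimes}(\mathcal{C}(X))(b,b)$ is a chain-homotopy equivalence by constructing an explicit retraction $f_{\mathcal{O}_X}$ (deforming each simplex so its vertices sit at $b$, using a chosen family of paths $\mathcal{O}_X$) together with an explicit chain homotopy. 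That retraction is choice-dependent, not natural; and crucially the equivalence is proved by exhibiting the homotopy, not by a word-length spectral sequence as you suggest. The resulting triangle commutes strictly because everything is built from inclusions of generators, whereas your proposed $\pi$ followed by classical Adams would only agree with $\mathcal{A}_X(b,b)$ up to an unproven homotopy (a simplex with non-$b$ vertices produces genuinely different paths before and after collapsing).

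Your stated fallback --- ``bypass the comparison altogether and re-run the universal cover argument\ldots reducing the Corollary to Adams' original theorem for simply connected spaces via $\pi_1(X,b)$-equivariance'' --- is in fact essentially the paper's actual proof. The paper does not deduce the Corollary from Theorem~\ref{theorem: Adams}; rather it shows, via the two commutative diagrams \eqref{equation: diagram cobar boxtimes change points} and \eqref{equation: diagram cobar at b different versions}, that both the Theorem and the Corollary are \emph{equivalent} to Proposition~\ref{proposition: Adams theorem based boxtimes}, and then proves that proposition by lifting loops and necklaces to $\widetilde{X}_b$ and applying Adams' classical simply connected result on each $\pi_1$-component. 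So the idea you held in reserve is the correct one; the route you foregrounded needs the missing isomorphisms $E_{\pm}$, the reversal of direction from quotient to inclusion, and the explicit retraction in place of the spectral-sequence heuristic.
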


In contrast, Chen's iterated integral map is not a quasi-isomorphism unless additional assumptions are imposed on the fundamental group. Similarly, the map $\mathcal{I}_X$ is generally not a quasi-equivalence for an arbitrary topological space $X$. To address this issue in the non-simply connected case, it is necessary to modify the functor $\mathsf{Cobar}^{\prod} \circ C^{\mathbb{\Delta}}$ and, accordingly, the natural $A_\infty$-transformation $\mathcal{I}$. To this end, we incorporate the fundamental groupoid into the construction, following an idea of Irie \cite{Irie, Wang2023thesis}.
The modification of $\mathsf{Cobar}^{\prod}\circ C^{\mathbb{\Delta}}$ defines a functor
\[
\mathsf{C^P}\colon \mathsf{Top}\to \mathsf{dgCat}_R,
\]
and there are natural chain maps
\[
\widetilde{It}_X \colon \mathsf{C}^{\square}(\mathsf{P}X)(a,b)\to \mathsf{C^P}(X)(a,b),\qquad X\in\mathsf{Top},\quad a,b\in X
\]
induced by evaluation maps on path spaces.
There is also a natural transformation
\[
\widetilde{\mathcal{G}}\colon\mathsf{Cobar}^{\boxtimes}\circ \mathcal{C}^{\mathbb{\Delta}}\Rightarrow \mathsf{C^P}
\]
defined analogously to $\mathcal{G}$.
If $X$ is simply connected, then $\mathsf{C^P}(X)$, $\widetilde{It}_X$, and $\widetilde{\mathcal{G}}_X$ coincide with $\mathsf{Cobar}^{\prod}(C(X))$, $It_X$, and $\mathcal{G}_X$, respectively.

We prove the following analogue of Theorem~\ref{theorem: A infinity map I} and Theorem~\ref{theorem: A infinity homotopy} as the fourth main result of this article.

\begin{theorem}\label{theorem: A infinity map and homotopy with pi_1}
    The collection of natural chain maps $\{\widetilde{It}_X\}_{X\in\mathsf{Top}}$ extends to a natural $A_\infty$-transformation
    \[ \widetilde{\mathcal{I}}\colon\mathsf{C}^{\square}\circ \mathsf{P}\Rightarrow\mathsf{C^P}.
    \]
    Moreover, there exists an $A_\infty$-homotopy $\widetilde{\mathcal{H}}$ between the natural $A_\infty$-transformations 
    \[\widetilde{\mathcal{F}}=\widetilde{\mathcal{I}}\circ\mathcal{A}\text{ and }\widetilde{\mathcal{G}} \;\colon\; \mathsf{Cobar}^{\boxtimes}\circ \mathcal{C}^{\mathbb{\Delta}}\Rightarrow \mathsf{C^P}.
    \]
\end{theorem}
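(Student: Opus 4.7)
The plan is to adapt the constructions of $\mathcal{I}$ and $\mathcal{H}$ from Theorems~\ref{theorem: A infinity map I} and~\ref{theorem: A infinity homotopy} to the groupoid-enriched setting determined by $\mathsf{C^P}$. The guiding principle is that the path being evaluated at marked points in the construction of $\mathcal{I}_n$ provides canonical path data connecting consecutive simplices, and this is exactly the additional information needed to promote the output from $\mathsf{Cobar}^{\prod}(C(X))$ to $\mathsf{C^P}(X)$. Since the two targets coincide when $X$ is simply connected, the modification amounts to carefully tracking fundamental groupoid decorations rather than altering the underlying combinatorics.

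First I would define $\widetilde{\mathcal{I}}_{X,n}$ by the same subdivision-of-simplex formulas that define $\mathcal{I}_{X,n}$: start with a tuple of composable cubical chains of paths, concatenate them to obtain a higher-dimensional family of paths, and evaluate at ordered sequences of marked points on the parameter simplex. The output is a formal sum of tuples of singular simplices in $X$; to land in $\mathsf{C^P}(X)$ rather than $\mathsf{Cobar}^{\prod}(C(X))$, I would decorate each such tuple by the segments of the underlying evaluated path that connect consecutive simplices, viewed as morphisms in the fundamental groupoid of $X$. This decoration is canonical, and by construction $\widetilde{\mathcal{I}}_{X,1}$ recovers $\widetilde{It}_X$. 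A parallel parameterized subdivision construction, mirroring the one used in the proof of Theorem~\ref{theorem: A infinity homotopy}, produces $\widetilde{\mathcal{H}}_X$ witnessing $\widetilde{\mathcal{F}}_X \simeq \widetilde{\mathcal{G}}_X$.

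Next I would verify the $A_\infty$-relations for $\widetilde{\mathcal{I}}$ and the homotopy identities for $\widetilde{\mathcal{H}}$. The Stokes-type identities underlying these relations arise from comparing canonical subdivisions of a parameter simplex, while the additional groupoid labels are determined pointwise by the underlying path families; hence they vary continuously across any such subdivision and automatically match along common boundaries, so the boundary calculations from the simply connected case carry over once the decorations are recorded. Equivalently, one may pass to the universal cover $\widetilde{X}\to X$, where the constructions reduce to $\mathcal{I}$ and $\mathcal{H}$ as provided by Theorems~\ref{theorem: A infinity map I} and~\ref{theorem: A infinity homotopy}, and then push down to $X$ using the $\pi_1(X)$-equivariance of the evaluation maps. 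Naturality in $X$ is immediate because every ingredient---path evaluation, simplicial subdivision, and fundamental groupoid data---is natural in continuous maps of spaces.

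The main obstacle is the bookkeeping required to confirm that the canonical groupoid decorations assemble into bona fide chains in $\mathsf{C^P}(X)(a,b)$ and that the cosimplicial structure, totalization differential, and composition of $\mathsf{C^P}$ interact with these decorations in the expected way. More concretely, one must check that the path labels produced by evaluation are compatible with the cosimplicial face and degeneracy maps used to define $\mathsf{C^P}$, and that this compatibility is preserved under the subdivision homotopies used to construct the higher components of $\widetilde{\mathcal{I}}$ and $\widetilde{\mathcal{H}}$. Once this coherence is pinned down, the $A_\infty$-relations and homotopy identities follow formally from those already established for $\mathcal{I}$ and $\mathcal{H}$, since both arguments are local in the parameter simplices and thus insensitive to the local system of fundamental groupoid decorations attached on top.
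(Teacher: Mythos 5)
Your overall strategy---lift to universal covers, reuse the constructions of Theorems~\ref{theorem: A infinity map I} and~\ref{theorem: A infinity homotopy} on each piece, and let the fundamental groupoid index the pieces---is the same as the paper's. But the proposal has a few genuine gaps that would need to be closed before the argument goes through.

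\begin{itemize}
\item \textbf{The target \(\mathsf{C^P}\) is not pinned down.} Your primary model is ``tuples of singular simplices in \(X\) decorated by segments of the evaluated path connecting consecutive simplices, viewed as morphisms in the fundamental groupoid.'' That is not what the paper's \(\mathsf{C^P}(X)(a,b)\) is: it is \(\bigoplus_{\beta\in\Pi_1X(a,b)}\mathsf{Cobar}^{\prod}(C(\widetilde{X}_a))([c_a],\beta)\), i.e.\ a sum over \(\Pi_1\) of the ordinary cosimplicial cobar of singular chains on the universal cover. Your decorated-tuple description may be re-expressible in this form, but, as stated, it is not a well-defined chain complex: the ``connecting segments'' between consecutive simplices live on the level of the underlying Moore path, not on the level of the chains output by \(AW\circ\mathrm{ev}_*\), and the cosimplicial cofaces of \(\mathsf{Cobar}^{\prod}\) (which insert a coproduct, or a cap by \(R_a\), \(R_b\)) do not obviously act on these decorations coherently. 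This is exactly the ``coherence'' you flag at the end of the proposal, and it is the non-trivial point, not bookkeeping.

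\item \textbf{``Push down to \(X\)'' is incorrect.} You say one may equivalently pass to the universal cover and then push down to \(X\) using \(\pi_1\)-equivariance. Pushing forward along the covering map \(\pi_a\colon\widetilde{X}_a\to X\) lands you in \(\mathsf{Cobar}^{\prod}(C(X))\), not in \(\mathsf{C^P}(X)\); it is the map \(\pi_X\) that the paper notes (Remark after the proof) forgets the \(\Pi_1\)-decomposition, so it throws away precisely the information the target is supposed to retain. The correct move is to stay on the universal cover and build \(\mathsf{C^P}\) so that its morphism complexes already live there.

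\item \textbf{The key splitting of \(\mathcal{J}_{X,k}\) is not identified.} The paper's construction of \(\widetilde{\mathcal{I}}\) hinges on the observation that the intermediate maps \(\mathcal{J}_{X,k}\) (from Lemma~\ref{lemma: J_1 extends A infinity}) take values in \(\bigoplus_{\beta\in\Pi_1X(a,b)}\mathrm{Tot}^{\prod}\big(\{C(\mathsf{P}^n\widetilde{X}_a([c_a],\beta))\}_n\big)\), because marked paths in \(X\) lift uniquely to marked paths in \(\widetilde{X}_a\). This is the step that makes the \(A_\infty\)-relations for \(\widetilde{\mathcal{I}}\) a formal consequence of those already proved for \(\mathcal{J}_{X}\). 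Without naming it, your claim that ``the boundary calculations from the simply connected case carry over once the decorations are recorded'' is an assertion rather than an argument.

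\item \textbf{The homotopy \(\widetilde{\mathcal{H}}\) and the composition of \(\mathsf{C^P}\).} The paper defines \(\widetilde{\mathcal{H}}_X\) by lifting each \(k\)-necklace uniquely to the universal cover and pushing forward the same fixed chain \(\psi^{\vec{\boldsymbol{d}}}\) constructed in Lemma~\ref{lemma: homotopy Psi^d}; there is no new acyclic-models argument. ``A parallel parameterized subdivision construction'' does not make this clear. Moreover, the composition law of \(\mathsf{C^P}\) is twisted by deck transformations identifying \(\widetilde{X}_b\) with the subspace of \(\widetilde{X}_a\) over \(b\), and verifying that \(\widetilde{\mathcal{I}}\), \(\widetilde{\mathcal{G}}\), \(\widetilde{\mathcal{H}}\) intertwine this twisted composition is a concrete compatibility that your proposal does not engage with.
\end{itemize}

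So: right guiding intuition, and the universal-cover remark gestures in the right direction, but the argument as written does not yet have a well-defined target, relies on a ``push down'' map that loses exactly the relevant information, and replaces the one substantial step (factorization of \(\mathcal{J}_{X,k}\) through the \(\Pi_1\)-indexed decomposition, plus necklace lifting for \(\widetilde{\mathcal{H}}\)) with an appeal to continuity and bookkeeping.
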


For any topological space $X$, an analogue of Corollary~\ref{corollary: Adams equiv iff Chen equiv} holds where
$\mathcal{F}_X$, $\mathcal{G}_X$, and $\mathcal{I}_X$ are replaced by 
$\widetilde{\mathcal{F}}_X$, $\widetilde{\mathcal{G}}_X$, and $\widetilde{\mathcal{I}}_X$, respectively. 
Together with Theorem~\ref{theorem: Adams}, this shows that $\widetilde{\mathcal{I}}_X$ is a quasi-equivalence for any topological space $X$. In this sense, $\widetilde{\mathcal{I}}_X$ provides a formal and ``correct'' extension of Chen's iterated integral map, guaranteeing its quasi-equivalence property beyond the simply connected case. See Section~\ref{section: non simply connected results} for further details.

\subsection*{Acknowledgment}
We thank Kei Irie for sharing ideas on constructing chain models of loop spaces, which contributed to the development of the present work. The first author acknowledges the excellent working conditions at Northwestern University, where part of this research was carried out and where valuable discussions with Ezra Getzler, Dennis Sullivan, Boris Tsygan, and Mahmoud Zeinalian took place during the workshop \textit{Homotopical Algebra in Geometry, Topology, and Physics}. Both the first author and the workshop received support from the NSF grant DMS-245405.

\section{Main Constructions}

In this section, we give further details on the cobar constructions $\mathsf{Cobar}^{\boxtimes}$, $\mathsf{Cobar}^{\prod}$, the natural transformations $\mathcal{A}$, $\mathcal{G}$ and the natural maps $\{It_X\}_{X\in\mathsf{Top}}$ introduced in Section \ref{section: introduction}.

\label{section: main constructions}
\subsection{Different versions of the cobar construction} In this subsection, we give more details on the non-classical versions of the cobar construction.
\label{section: different versions cobar}

\subsubsection{The ``many object" version \ref{item: version of cobar - many}} Given a set $\mathcal{S}$, denote by $R[\mathcal{S}]$ the cocommutative counital coalgebra given by the free $R$-module on $\mathcal{S}$ equipped with the coproduct determined by $b \mapsto b\otimes b$ for all $b \in \mathcal{S}$. We shall define a version of the cobar construction which takes as an input the following notion introduced in \cite{R}.

A \textit{categorical $R$-coalgebra} consists of a tuple $(\mathcal{C}, \partial, \Delta,  h)$ where
\begin{enumerate}
    \item $(\mathcal{C},\Delta)$ is a non-negatively graded coassociative counital coalgebra that is flat as an $R$-module
    \item $\partial:\mathcal{C} \rightarrow \mathcal{C}$ is a degree $-1$ coderivation of the coproduct $\Delta$
    \item $h: \mathcal{C} \rightarrow R$ is a linear map of degree $-2$ satisfying $h\circ \partial =0$ and \[\partial^2 = (h \otimes \text{id} - \text{id} \otimes h) \circ \Delta ,\] i.e. $h$ is a \textit{curvature} for $(\mathcal{C},\partial,
    \Delta)$
    \item The set \[\mathcal{S}(\mathcal{C}) = \{ x \in \mathcal{C} : \Delta(x)=x\otimes x \text{ and } \varepsilon(x)=1 \},\]
   where $\varepsilon \colon \mathcal{C} \to R$ denotes the counit, is non-empty and the natural inclusion map $\mathcal{S}(\mathcal{C})\hookrightarrow \mathcal{C}_0$ induces an isomorphism of coalgebras
    \[ R[\mathcal{S}(\mathcal{C})]\cong \mathcal{C}_0.\]
    \item The natural projection $\epsilon: \mathcal{C}\rightarrow \mathcal{C}_0$ satisfies $ \epsilon \circ \partial= 0 $
\end{enumerate}

Any categorical coalgebra $(\mathcal{C},\partial,\Delta,h)$ has a natural $\mathcal{C}_0$-bicomodule structure with structure maps given by
\[ \rho_r \colon \mathcal{C} \xrightarrow{\Delta} \mathcal{C} \otimes \mathcal{C} \xrightarrow{\text{id} \otimes \epsilon} \mathcal{C} \otimes \mathcal{C}_0\]
and
\[ \rho_l \colon \mathcal{C} \xrightarrow{\Delta} \mathcal{C} \otimes \mathcal{C} \xrightarrow{\epsilon \otimes \text{id} } \mathcal{C}_0 \otimes \mathcal{C}.\]

Categorical coalgebras form a category with the following notion of morphism. A morphism from $(\mathcal{C},\partial,\Delta,h)$ to $(\mathcal{C}',\partial',\Delta', h')$ consists of a pair $f=(f_0,f_1)$ where
\begin{enumerate}
    \item $f_0:(\mathcal{C},\Delta)\rightarrow (\mathcal{C}',\Delta')$ is a morphism of graded coalgebras 
    \item $f_1: \mathcal{C} \rightarrow \mathcal{C}_0^\prime$ is a $\mathcal{C}_0^{\prime}$-bicomodule map of degree $-1$
\end{enumerate}
satisfying
\begin{align*}
    & f_0\circ \partial = \partial' \circ f_0 + (\overline{f}_1 \otimes f_0)\circ(\Delta - \Delta^{\text{op}}) \text{ and}\\
    & h'\circ f_0 = h+ \overline{f}_1 \circ \partial + (\overline{f}_1\otimes \overline{f}_1)\circ \Delta , 
\end{align*}
where $\overline{f}_1 = \varepsilon' \circ f_1$ and $\varepsilon'$ is the counit of $\mathcal{C}'$. The composition of two morphisms is defined by
\begin{equation}
    (g_0,g_1)\circ(f_0,f_1)= (g_0 \circ f_0, g_1\circ f_0 + g_0 \circ f_1).
\end{equation}
We denote by $\mathsf{cCoalg}_R$ the category of categorical coalgebras.

Given a categorical coalgebra $\mathcal{C}$, the dg category $\mathsf{Cobar}^{\boxtimes}(\mathcal{C})$ is defined as follows. The objects are the elements of $\mathcal{S}$. Denote by $\boxtimes$ the cotensor product over $\mathcal{C}_0$ and by $s^{-1}$ the degree shift functor by $-1$. For any two $a,b \in \mathcal{S}$ we define
\[\mathsf{Cobar}^{\boxtimes}(\mathcal{C})(a,b)=
\bigoplus_{n=0}^{\infty} R_a \boxtimes (s^{-1}\overline{\mathcal{C}})^{\boxtimes n} \boxtimes R_b, \]
where, for any $a \in \mathcal{S}$, $R_a$ is $R$ equipped with the $R[\mathcal{S}]$-bicomodule determined by the inclusion $\{a\} \hookrightarrow \mathcal{S}$, $\overline{\mathcal{C}}= \bigoplus_{i=1}^{\infty} \mathcal{C}_i$, and $(s^{-1}\overline{\mathcal{C}})^{\boxtimes 0}= \mathcal{C}_0$. The identity morphism at $a \in \mathcal{S}$ corresponds to the unit of $R$ through the following identification  \[1_R \in R \cong R_a \cong R_a \boxtimes (s^{-1}\overline{\mathcal{C}})^{\boxtimes 0} \boxtimes R_a \subseteq \mathsf{Cobar}^{\boxtimes}(\mathcal{C})(a,a)_0.\] 
Each differential 
\[ D^{\boxtimes} \colon \mathsf{Cobar}^{\boxtimes}(\mathcal{C})(a,b) \to \mathsf{Cobar}^{\boxtimes}(\mathcal{C})(a,b)\]
is induced by the sum $\partial + \Delta + h$. The fact $D^{\boxtimes} \circ D^{\boxtimes}  =0$ then follows from the compatibility of $\partial$ and $\Delta$, the coassociativity of $\Delta$, and the curvature equation relating $h$, $\partial$, and $\Delta$.
The composition of morphisms is given by concatenation of monomials. This construction gives rise to a functor
\[ \mathsf{Cobar}^{\boxtimes} \colon \mathsf{cCoalg}_R \to \mathsf{dgCat}_R.\]

The normalized singular chains on a topological space may be regarded as a categorical coalgebra as we now explain. Recall that the classical normalized singular chains functor 
\[C_{\bullet}: \mathsf{Top} \rightarrow \mathsf{dgCoalg}_R\]
assigns to a space $X$ a dg coalgebra $(C_{\bullet}(X),\partial , \Delta)$ with coproduct $\Delta$ given by the Alexander-Whitney diagonal approximation. This does not define a categorical coalgebra with curvature $0$ since, in general, $\epsilon \circ \partial  \neq 0$, where $\epsilon \colon C_{\bullet}(X) \to C_{0}(X)$ is the canonical projection map. However, one may proceed as follows. Let 
\begin{equation}\label{equation: e C(X) to R}
    e\colon C_\bullet(X)\rightarrow R 
\end{equation}
be the $1$-cochain induced by sending non-degenerate singular $1$-simplices to $1_R$ and everything else to $0$. Define a degree $-1$  linear map\[\widetilde{\partial} \colon C_\bullet(X) \to C_{\bullet-1}(X)\] and a degree $-2$ linear map \[h: C_\bullet(X) \rightarrow R\] by
\[ \widetilde{\partial} = \partial  - (\text{id} \otimes e - e\otimes \text{id}) \circ \Delta \]
and
\[ h = (e \otimes e) \circ \Delta + e \circ \partial, \]
respectively. 
A straightforward check yields that $\mathcal{C}(X)=(C_\bullet(X),\widetilde{\partial},\Delta,h)$ defines a categorical coalgebra. Furthermore, this construction defines a functor 
\[ \mathcal{C} \colon \mathsf{Top} \to \mathsf{cCoalg}_R.\]

\subsubsection{The totalized cosimplicial version \ref{item: version of cobar - many prod}}
For any dg $R$-coalgebra $C$ such that $C_0=R[S]$, the dg category $\mathsf{Cobar}^{\prod}(C)$ is defined as follows. The set of objects is $S$, and for any $a,b\in S$,
\[\mathsf{Cobar}^{\prod}(C)(a,b)=\prod_{n=0}^{\infty} R_a\otimes (s^{-1}C)^{\otimes n}\otimes R_b.\]
Each differential
\[
D^{\prod}\colon  \mathsf{Cobar}^{\prod}(C)(a,b)\to\mathsf{Cobar}^{\prod}(C)(a,b)
\]
is induced by the sum $\partial+\Delta$. The composition of morphisms is given by concatenation of monomials. The chain complex $\mathsf{Cobar}^{\prod}(C)(a,b)$ may also be described as the totalization of a cosimplicial chain complex as follows. Recall that for a cosimplicial chain complex $[n]\mapsto (C(n),\partial)$, its totalization has underlying $R$-module
\[
\mathrm{Tot}^{\prod}_{\bullet}(\{C(n)\}_{n\ge0}) = \prod_{n=0}^{\infty} C(n)_{\bullet + n}
\]
and differential 
\[
D^{\prod} = \partial + \delta,
\] where $\delta$ is a signed sum of the coface maps; see, e.g., \cite[(2.4)]{Wang2024}. Now, consider 
\[
C(n)=C(n;a,b)=R_a\otimes C^{\otimes n}\otimes R_b
\]
with coface maps 
\begin{align*}
    \delta_i\colon R_a\otimes C^{\otimes n-1}\otimes R_b & \to R_a\otimes C^{\otimes n}\otimes R_b\quad (0\leq i\leq n)\\
    c_0\otimes c_1\otimes\dots\otimes c_{n-1}\otimes c_{n} &\mapsto c_0\otimes \dots\otimes c_{i-1}\otimes\Delta (c_i)\otimes c_{i+1}\otimes\dots\otimes c_{n}
\end{align*}
induced by the coproduct $\Delta:C\to C\otimes C$ when $i=1,\ldots, n-1$,
and 
\begin{align*}
\delta_0 \colon c_0\otimes c_1\otimes\dots\otimes c_{n-1}\otimes c_{n}  &\mapsto \rho_{r,a} (c_0) \otimes c_1\otimes\dots\otimes c_{n-1}\otimes c_{n}, \\
\delta_{n} \colon c_0\otimes c_1\otimes\dots\otimes c_{n-1}\otimes c_{n} &\mapsto c_0\otimes c_1\otimes\dots\otimes c_{n-1}\otimes \rho_{l,b}(c_{n}),
\end{align*}
where the map $\rho_{r,a}$ is defined as the composition \[ R_a\cong R \xrightarrow{\cong} R \otimes R \xrightarrow{\text{id} \otimes {i_a}} R \otimes C \cong R_a \otimes C \]
for $i_a \colon R \to C$ being induced by the inclusion $\{a \} \hookrightarrow S$ and $\rho_{l,b} \colon R_b \to  C \otimes R_b$ is defined similarly. The codegeneracy maps are defined by
\begin{align*}
    \sigma_i\colon R_a\otimes C^{\otimes n+1}\otimes R_b & \to R_a\otimes C^{\otimes n}\otimes R_b\quad (0\leq i\leq n)\\
    c_0\otimes c_1\otimes\dots\otimes c_{n+1}\otimes c_{n+2} &\mapsto c_0\otimes \dots\otimes c_{i}\otimes\varepsilon (c_{i+1})\otimes c_{i+2}\otimes\dots\otimes c_{n+2}
\end{align*}
where $\varepsilon:C\twoheadrightarrow C_0\to R$ is the counit map. Then $\mathsf{Cobar}^{\prod}(C)(a,b)$ is defined as the totalization $\mathrm{Tot}^{\prod}\left(\{C(n;a,b)\}_{n\ge0}\right)$.

The \textit{conormalization} of the totalization $\mathrm{Tot}^{\prod}\left(\{C(n;a,b)\}_{n\ge0}\right)$, which we denote by $N^c\mathrm{Tot}^{\prod}\left(\{C(n;a,b)\}_{n\ge0}\right)$, is the subcomplex where any $\sigma_i$ vanishes. The inclusion $N^c\mathrm{Tot}^{\prod}\hookrightarrow\mathrm{Tot}^{\prod}$ is a quasi-isomorphism (cf.~\cite[Lemma 2.5]{Irie}).

\begin{example}
    Let $X$ be a topological space and let $b \in X$. We are interested in the following examples of differential graded coalgebras $C$ equipped with a set of objects $S$ such that $C_0 = R[S]$:
    \begin{itemize}
        \item $C = C_{\bullet}(X)$, with $S$ the set of points in $X$.
        \item $C = C^0_{\bullet}(X, b)$ or $C^1_{\bullet}(X, b)$, with $S = \{b\}$.
    \end{itemize}
    In case $X=M$ is a smooth manifold, we may also restrict the discussion to piecewise smooth singular chains $C^{\mathrm{s}}_{\bullet}$.
\end{example}

\subsection{The natural transformations \texorpdfstring{$\mathcal{A},\mathcal{G}$}{A, G} and the natural map $It_X$.} 
For any topological space $X$, the functors $\mathcal{A}_X$, $\mathcal{G}_X$ act as the identity on objects. In the following, we describe the action of $\mathcal{A}_X$, $\mathcal{G}_X$ and $It_X$ on the morphism complexes.

\subsubsection{The ``many object'' version of Adams' map.}

Denote by $v_0, \ldots, v_n$ the vertices of the standard $n$-simplex $\mathbb{\Delta}^n \subset \mathbb{R}^{n+1}$. Using the method of acyclic models, one may construct a collection of singular cubical chains
\[ \{ \theta_n \colon I^{n-1} \to \mathsf{P}(\mathbb{\Delta}^n)(v_0, v_n) \}_{n\geq 1} \]
such that
\begin{enumerate}
    \item 
$\theta_1(0) \in \mathsf{P}(\mathbb{\Delta}^1)(v_0, v_1)$ is the (Moore) path $\theta_1(0) \colon [0, \sqrt{2}]\to \mathbb{\Delta}^1$ given by 
\[\theta_1(0)(s)= v_0 + \frac{s}{\sqrt{2}}(v_1-v_0),\]
and
\item
\[ \partial^{\square} \circ \theta_n = \sum_{i=1}^{n-1} (-1)^i (\mathsf{P}(l_{n-i,n}) \circ \theta_{n-i})* (\mathsf{P}(f_{i,n}) \circ \theta_i)- \sum_{i=1}^{n-1}(-1)^i\mathsf{P}(d_i) \circ \theta_i,\]
where $l_{j,n} \colon \mathbb{\Delta}^j \hookrightarrow \mathbb{\Delta}^n$ and $f_{j,n} \colon \mathbb{\Delta}^j \hookrightarrow \mathbb{\Delta}^n$ denote the last and first $j$-dimensional face inclusions, respectively,  $d_i \colon \mathbb{\Delta}^{n-1} \hookrightarrow \mathbb{\Delta}^n$ denotes the $i$-th face inclusion, and $*$ denotes concatenation of paths.
\end{enumerate}
See \cite[Section 3.4]{Rivera-Medina} for a description of an explicit choice of maps $\{ \theta_n \}_{n \geq 1}$. This construction goes back to \cite[Section 3]{Adams}. 

Any such collection of maps $\{ \theta_n \}_{n \geq 1}$ gives rise to a well defined linear map of degree $-1$ 
\[A_X \colon \overline{\mathcal{C}}(X) \to \bigoplus_{a,b \in X}\mathsf{C}^{\square}(\mathsf{P}(X))(a,b)\]
by sending a class represented by a singular chain $\sigma \colon \mathbb{\Delta}^n \to X$ to \[A_X(\sigma)=\mathsf{P}(\sigma) \circ \theta_n \colon I^{n-1} \to \mathsf{P}(X)(\sigma(v_0), \sigma(v_n)).\] The map $A_X$ satisfies the (curved) Maurer-Cartan equation
\[ \partial^{\square} \circ A_X - A_X \circ \widetilde{\partial}= * \circ (A_X \otimes A_X) \circ \widetilde{\Delta} + \widetilde{h}, \]
where $\widetilde{h} \colon \overline{\mathcal{C}}(X) \to \bigoplus_{a,b \in X}\mathsf{C}^{\square}(\mathsf{P}(X))(a,b)$ is the degree $-2$ map given by the composition 
\begin{align*}
    \mathcal{C}(X) \xrightarrow{\rho_r}  \mathcal{C}(X) \otimes \mathcal{C}_0(X) \xrightarrow{h \otimes \text{id}} R \otimes  \mathcal{C}_0(X) \cong  \mathcal{C}_0(X)& \\\hookrightarrow  \bigoplus_{a \in X}\mathsf{C}^{\square}(\mathsf{P}(X))(a,a) &\hookrightarrow \bigoplus_{a,b \in X}\mathsf{C}^{\square}(\mathsf{P}(X))(a,b).
\end{align*} 
Extending $A_X$ to be compatible with composition, for any $a,b \in X$, we obtain a natural degree $0$ linear map 
\[\mathcal{A}_X \colon \mathsf{Cobar}^{\boxtimes}(\mathcal{C}(X))(a,b) \to \mathsf{C}^{\square}(\mathsf{P}(X))(a,b),\]
which, by the Maurer-Cartan equation above, is a chain map.

\subsubsection{A formal dual of Chen's iterated integral map} \label{Section: A formal dual of Chen's iterated integral map}
Given $a,b\in X$, the chain map 
    \begin{equation}\label{equation: It_X}
        It_X \colon \mathsf{C}^{\square}(\mathsf{P}X)(a,b) \to \mathsf{Cobar}^{\prod}(C(X))(a,b)
    \end{equation}
is defined as the composition of several maps specified below.

\emph{Step 1.} There is a natural chain map
\[
\eta^{\square,\mathbb{\Delta}} \colon \mathsf{C}^{\square}_{\bullet}(\mathsf{P}X)(a,b)
\to \mathsf{C}^{\mathbb{\Delta}}_{\bullet}(\mathsf{P}X)(a,b)
= C_{\bullet}(\mathsf{P}X(a,b)),
\]
induced by the standard triangulation of cubes. Concretely,
for any singular cube $\lambda \colon [0,1]^n \to \mathsf{P}X(a,b)$,
\[
\eta^{\square,\mathbb{\Delta}}(\lambda)
= \sum_{\tau \in S_n} (-1)^{\mathrm{sgn}(\tau)}\, \lambda \circ \iota_{\tau},
\]
where $S_n$ denotes the symmetric group on $n$ letters, and
\[
\iota_\tau \colon \mathbb{\Delta}^n \to [0,1]^n,\qquad
(t_1,\dots,t_n) \mapsto (t_{\tau(1)}, \dots, t_{\tau(n)})
\]
for $0 \le t_1 \le \cdots \le t_n \le 1$. More generally,
\[
\eta^{\square,\mathbb{\Delta}} \colon C^{\square}_{\bullet} \Rightarrow C^{\mathbb{\Delta}}_{\bullet}
\]
is a natural transformation from the normalized cubical singular chain functor on topological spaces to the normalized simplicial singular chain functor; the map displayed above is its component at the space $\mathsf{P}X(a,b)$ and these are natural with respect to continuous maps of spaces.

\emph{Step 2.} Consider the cosimplicial space 
\[
[n] \longmapsto \mathsf{P}X(a,b) \times \mathbb{\Delta}^n
\]
with cosimplicial structure induced from the standard one
$[n] \mapsto \mathbb{\Delta}^n$.  
Set 
\[
u_n \coloneq \mathrm{id}_{\mathbb{\Delta}^n} \in C_n(\mathbb{\Delta}^n),\quad u \coloneq \{u_n\}_{n \ge 0}.
\] 
Then, the linear maps
\[
C_{\bullet}(\mathsf{P}X(a,b)) \to C_{\bullet+n}(\mathsf{P}X(a,b) \times \mathbb{\Delta}^n),
\quad x \mapsto (-1)^nx \times u_n,
\]
for all $n\ge0$ together induce a chain map
\[
\Phi_{u} \colon C_{\bullet}(\mathsf{P}X(a,b)) \to 
\mathrm{Tot}^{\prod}_{\bullet}(\{C(\mathsf{P}X(a,b) \times \mathbb{\Delta}^n)\}_{n\ge0}).
\]
More generally, any choice of $u = \{u_n \in C_n(\mathbb{\Delta}^n)\}_{n \ge 0}$ that satisfies
\begin{equation}\label{equation: condition for u_0}
[u_0] = [\mathrm{id}_{\mathbb{\Delta}^0}] \in H_0(\mathbb{\Delta}^0) \quad\text{and}\quad
\partial u_n = \sum_{i=0}^n (-1)^i (d_i)_*(u_{n-1}) \quad (\forall n \ge 1)
\end{equation}
suffices to define $\Phi_u$. Moreover, by an acyclic models argument, using the fact that $H_{n+1}(\mathbb{\Delta}^n)=0$ for all $n \ge 0$, 
one can show that the chain homotopy class of $\Phi_u$ is independent of the choice of $u$.

\emph{Step 3.} Consider the cosimplicial space $n\mapsto \{a\}\times X^{n}\times\{b\}$ with cofaces induced by the diagonal map and codegeneracies given by forgetful maps.
The evaluation maps
\begin{align*}
\mathrm{Ev}_n\colon \quad\mathsf{P}X(a,b)\times\mathbb{\Delta}^n &\to \{a\}\times X^{n}\times\{b\}, \\
((\gamma,T),(t_1,\dots,t_n)) &\mapsto (a,\gamma(t_1T),\dots,\gamma(t_nT),b)
\end{align*}
for all $n\geq0$ respect cosimplicial structures, inducing a chain map
\[
\mathrm{Ev}_* \colon \mathrm{Tot}^{\prod}(\{C(\mathsf{P}X(a,b) \times \mathbb{\Delta}^n)\}_{n\ge0})\to\mathrm{Tot}^{\prod}(\{C(\{a\}\times X^{n}\times\{b\})\}_{n\ge0}).
\]

\emph{Step 4.} Iterations of the standard Alexander-Whitney map define a cosimplicial chain map
\[
AW_n \colon C_{\bullet}(\{a\}\times X^n\times\{b\})\to (R_a\otimes C(X)^{\otimes n}\otimes R_b)_{\bullet},\quad n\geq0,
\]
inducing a chain map
\[
AW \colon \mathrm{Tot}^{\prod}_{\bullet}(\{C(\{a\}\times X^{n}\times\{b\})\}_{n\ge0}) \to \mathsf{Cobar}^{\prod}(C(X))(a,b).
\]

We have thus defined \eqref{equation: It_X} as the composition 
\[
It_X = AW\circ\mathrm{Ev}_*\circ \Phi_{u}\circ\eta^{\square,\mathbb{\Delta}}.
\]

\begin{remark}\label{remark: when to triangulate cubes}
In the definition of $It_X$, we chose to triangulate cubes right from the beginning.  
Alternatively, one may remain in the setting of normalized cubical singular chains and pass to normalized simplicial singular chains later, either before $\mathrm{Ev}_*$ or before $AW$. In this case,
\[
It_X = AW \circ \eta^{\square,\mathbb{\Delta}} \circ \mathrm{Ev}_* \circ \Phi_{v} = AW \circ  \mathrm{Ev}_* \circ \eta^{\square,\mathbb{\Delta}} \circ\Phi_{v},
\]
where $v = \{\,v_n \in C^{\square}_n(\mathbb{\Delta}^n)\,\}_{n \ge 0}$ is chosen such that $v_0$ is the unique point map and
\[
\partial^{\square} v_n = \sum_{i=0}^n (-1)^i (d_i)_*(v_{n-1}) \quad \text{for all } n \ge 1.
\]
Lemma~\ref{lemma: simplex to cube} below, together with the fact that $\eta^{\square,\mathbb{\Delta}}$ intertwines the cross products on cubes and simplices, implies that these two approaches are equivalent:
\[
\mathrm{Ev}_* \circ \Phi_{u} \circ \eta^{\square,\mathbb{\Delta}} = \eta^{\square,\mathbb{\Delta}} \circ \mathrm{Ev}_* \circ \Phi_{v} =  \mathrm{Ev}_* \circ \eta^{\square,\mathbb{\Delta}} \circ \Phi_{v}
\]
if $v = \eta^{\mathbb{\Delta},\square}(u)$ or $u = \eta^{\square,\mathbb{\Delta}}(v)$. Note that $v = \eta^{\mathbb{\Delta},\square}(u)$ acturally implies $u = \eta^{\square,\mathbb{\Delta}}(v)$ by Lemma~\ref{lemma: simplex to cube}.
\end{remark}

\begin{lemma}\label{lemma: simplex to cube}
    There is a natural transformation 
    \[
    \eta^{\mathbb{\Delta},\square}\colon C^{\mathbb{\Delta}}_{\bullet}\Rightarrow C^{\square}_{\bullet}
    \] 
    from the normalized simplicial singular chain functor to the normalized cubical singular chain functor, such that     
    \[\eta^{\square,\mathbb{\Delta}}\circ\eta^{\mathbb{\Delta},\square} = \mathrm{id}_{C^{\mathbb{\Delta}}_{\bullet}}.
    \]
\end{lemma}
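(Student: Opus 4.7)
My plan is to define $\eta^{\mathbb{\Delta},\square}$ by precomposition with an explicit natural family of continuous maps $f_n \colon [0,1]^n \to \mathbb{\Delta}^n$ that serve as ``retractions'' of the cube onto the standard simplex. Using the coordinate model $\mathbb{\Delta}^n = \{(t_1, \ldots, t_n) : 0 \le t_1 \le \cdots \le t_n \le 1\}$, I take
\[
f_n(s_1, \ldots, s_n) = \bigl(\min(s_1, \ldots, s_n),\; \min(s_2, \ldots, s_n),\; \ldots,\; \min(s_{n-1}, s_n),\; s_n\bigr),
\]
which lies in $\mathbb{\Delta}^n$ because $\min$ over a larger tail is smaller. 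For a singular simplex $\sigma \colon \mathbb{\Delta}^n \to Y$, set $\eta^{\mathbb{\Delta},\square}_Y(\sigma) := \sigma \circ f_n$ and extend linearly. Naturality in $Y$ is immediate from the definition.

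The retract identity is the cleanest part of the argument. For $\sigma \colon \mathbb{\Delta}^n \to Y$, unpacking definitions gives
\[
\eta^{\square,\mathbb{\Delta}}(\sigma \circ f_n) = \sum_{\tau \in S_n} (-1)^{\mathrm{sgn}(\tau)}\, \sigma \circ (f_n \circ \iota_\tau).
\]
Direct substitution yields $f_n \circ \iota_{\mathrm{id}} = \mathrm{id}_{\mathbb{\Delta}^n}$. For $\tau \ne \mathrm{id}$, let $i$ be the smallest index with $\tau(i) \ne i$; then $\tau$ fixes $\{1, \ldots, i-1\}$ pointwise and permutes $\{i, i+1, \ldots, n\}$ with $\tau(i) > i$, so the value $i$ appears among $\{\tau(i+1), \ldots, \tau(n)\}$. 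Consequently the $i$-th and $(i+1)$-th coordinates of $f_n \circ \iota_\tau(t_1, \ldots, t_n)$ both equal $t_i$, so $\sigma \circ f_n \circ \iota_\tau$ factors through the codimension-one diagonal $\{t_i = t_{i+1}\} \subset \mathbb{\Delta}^n$, which is the image of a simplicial degeneracy. Hence every $\tau \ne \mathrm{id}$ term vanishes in the normalized simplicial complex, and $\sigma$ remains.

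The chain-map verification is the main piece of remaining work, and the main obstacle in the proof. The strategy is to compute each cubical face restriction of $f_n$: a case analysis shows $f_n|_{s_1 = 0} = d_0^{\mathbb{\Delta}} \circ f_{n-1}$, $f_n|_{s_i = 1} = d_i^{\mathbb{\Delta}} \circ f_{n-1}$ for $1 \le i \le n$, and that the remaining faces $f_n|_{s_j = 0}$ with $j \ge 2$ are independent of $s_1, \ldots, s_{j-1}$ and hence are degenerate as cubes. Matching signs against the standard cubical boundary formula then yields $\partial^\square(\sigma \circ f_n) \equiv \sum_{i=0}^n (-1)^i \sigma \circ d_i^{\mathbb{\Delta}} \circ f_{n-1} = \eta^{\mathbb{\Delta},\square}(\partial^{\mathbb{\Delta}} \sigma)$ modulo degenerate cubes. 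The subtle remaining point is descent through normalization on the simplicial side: for each simplicial degeneracy $s_j^\ast \colon \mathbb{\Delta}^n \to \mathbb{\Delta}^{n-1}$ (in ordered coordinates, the ``drop $t_{j+1}$'' map) one must check that $\sigma \circ f_n$ represents a degenerate cube whenever $\sigma$ factors through $s_j^\ast$. When the explicit $f_n$ above does not directly exhibit this, the fallback is to refine $f_n$ to a closely related map or absorb the discrepancy into a small chain-homotopy correction, while preserving both $f_n \circ \iota_{\mathrm{id}} = \mathrm{id}$ and the degeneracy property for $\tau \ne \mathrm{id}$ that drive the retract identity.
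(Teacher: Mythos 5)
Your folding map $f_n$ is, up to conjugating by the involution that reverses the order of the coordinates and replaces each $s_i$ by $1-s_i$, identical to the map the paper uses (there, $t_k' = \max\{t_1,\dots,t_k\}$), so the overall strategy is the same. There is, however, a genuine gap in the retraction step. You justify the vanishing of the $\tau\neq\mathrm{id}$ terms by observing that the image of $f_n\circ\iota_\tau$ lies in the diagonal $\{t_i = t_{i+1}\}\subset\mathbb{\Delta}^n$ and then asserting that this diagonal is ``the image of a simplicial degeneracy.'' Both halves of this are off: $\{t_i = t_{i+1}\}$ is the image of the \emph{coface} $d_i$, an inclusion, not of a degeneracy, which is a surjection; and, more to the point, a singular simplex whose image lands in a proper face need not be degenerate at all --- for instance $(t_1,t_2)\mapsto\bigl(\tfrac{t_1+t_2}{2},\tfrac{t_1+t_2}{2}\bigr)$ has image in the diagonal of $\mathbb{\Delta}^2$ yet factors through neither codegeneracy. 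Degeneracy means factoring through a codegeneracy $\mathbb{\Delta}^n\twoheadrightarrow\mathbb{\Delta}^{n-1}$, i.e.\ being constant in some simplex coordinate, and you have not shown that.

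The fix is short and close to what you already wrote. On the ordered simplex one has $(f_n\circ\iota_\tau)(t) = (t_{m(1)},\dots,t_{m(n)})$ with $m(k)=\min\tau(\{k,\dots,n\})$; this $m\colon\{1,\dots,n\}\to\{1,\dots,n\}$ is non-decreasing with $m(1)=1$. Your observation that $m(i)=m(i+1)$ shows $m$ fails to be injective, hence (pigeonhole) fails to be surjective, so some $t_j$ never appears among the outputs; therefore $f_n\circ\iota_\tau$ is constant on the fibers of the codegeneracy dropping $t_j$, and $\sigma\circ f_n\circ\iota_\tau$ is degenerate after all. Finally, the normalization worry you raise at the end is a real subtlety --- a simplicially degenerate $\sigma$ does \emph{not} in general yield a cube constant in a coordinate direction (e.g.\ $\sigma'(\min\{s_1,s_2\})$) --- but the ``chain-homotopy correction'' fallback you propose cannot work, since any such correction would destroy the exact identity $\eta^{\square,\mathbb{\Delta}}\circ\eta^{\mathbb{\Delta},\square}=\mathrm{id}$ that the lemma requires; note that the paper's own proof is equally silent on this point.
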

\begin{proof}
For every $n \ge 0$, define a folding map
\[
f_n \colon I^n \to \Delta^n, \qquad
f_n(t_1, \dots, t_n) = (t_1', \dots, t_n'), \quad
t_k' = \max\{t_1, \dots, t_k\}.
\]
A similar map appears in \cite[Section~4.2]{igusa2009}, although for a different purpose.

For any topological space $Y$ and any singular simplex $\sigma \colon \Delta^n \to Y$, define
\[
\eta^{\mathbb{\Delta},\square}_Y(\sigma) = \sigma \circ f_n \colon I^n \to Y,
\]
and extend linearly to $C^{\mathbb{\Delta}}_\bullet(Y)$. By construction, $\eta^{\mathbb{\Delta},\square}_Y$ is natural in $Y$.

To see that $\eta^{\mathbb{\Delta},\square}_Y$ is a chain map, observe that among the cubical faces of $\eta^{\mathbb{\Delta},\square}_Y(\sigma)$, those of the form $\{t_k=1\}$ with $k<n$ are degenerate, while the remaining $n+1$ non-degenerate faces correspond exactly to the simplicial faces of $\sigma$.

Finally, to verify that $\eta^{\square,\mathbb{\Delta}}_Y \circ \eta^{\mathbb{\Delta},\square}_Y = \mathrm{id}_{C^{\mathbb{\Delta}}_\bullet(Y)}$, compute
\[
(\eta^{\square,\mathbb{\Delta}}_Y \circ \eta^{\mathbb{\Delta},\square}_Y)(\sigma) 
= \sum_{\tau \in S_n} \mathrm{sgn}(\tau)\, \sigma \circ f_n \circ \iota_\tau
= \sigma,
\]
since $f_n \circ \iota_\tau$ is degenerate whenever $\tau \ne \mathrm{id}$, and $f_n \circ \iota_{\mathrm{id}} = \mathrm{id}_{\Delta^n}$.
\end{proof}

\begin{remark}\label{remark: I formal dual to Chen}
Let $M$ be a smooth manifold, $\Omega^{\bullet}(M)$ the dg algebra of differential forms on $M$, and $\mathsf{P}^{\mathrm{s}}M(a,b)$ the space of piecewise smooth Moore loops in $M$ from $a$ to $b$. Set $R=\mathbb{R}$. The iterated integral map for $\mathsf{P}^{\mathrm{s}}M(a,b)$, originally due to Chen in \cite{Chen}, is a cochain map              
\begin{equation}\label{equation: Chen int}
    \int \colon \mathsf{Bar}(\mathbb{R}_a,\Omega(M),\mathbb{R}_b) \to \Omega^{\bullet}(\mathsf{P}^{\mathrm{s}}M(a,b)),
\end{equation}
where $\mathsf{Bar}$ denotes a version of the classical bar construction and differential forms on $\mathsf{P}^{\mathrm{s}}M(a,b)$ are defined via the framework of of \textit{differentiable spaces}. Following the presentation in \cite{GETZLER1991339}, $\int$ is induced by a sequence of maps
\begin{align*}
    \mathbb{R}_a \otimes \Omega^{\bullet}(M)^{\otimes n} \otimes \mathbb{R}_b & \hookrightarrow \Omega^{\bullet}(\{a\}\times M^n \times \{b\}) \\ 
    &\xrightarrow{\mathrm{Ev}_n^*} \Omega^{\bullet}(\mathsf{P}^{\mathrm{s}}M(a,b)\times \mathbb{\Delta}^n) \xrightarrow{\int_{\mathbb{\Delta}^n}} \Omega^{\bullet - n}(\mathsf{P}^{\mathrm{s}}M(a,b))
\end{align*}
for $n\geq0$, where $\int_{\mathbb{\Delta}^n}$ denotes integration along the fibers. Consider the pairing
\[
\Omega^{\bullet}(Y)\times C^{\mathrm{s}}_{\bullet}(Y) \to \mathbb{R}
\]
induced by integration, where $Y$ is $M$ or $\mathsf{P}^{\mathrm{s}}M(a,b)$, and the pairing
\begin{align}\label{equation: pairing bar cobar}
&\mathsf{Bar}(\mathbb{R}_a,\Omega^{\bullet}(M),\mathbb{R}_b) \times \mathsf{Cobar}^{\prod}(C^{\mathrm{s}}_{\bullet}(M))(a,b) \to \mathbb{R} \\ \nonumber
    &\Big\langle \sum_{n=0}^{N}1\otimes\omega_{n,1}\otimes\dots\otimes\omega_{n,n}\otimes1\,,\,\big(1\otimes\alpha_{m,1}\otimes\dots\otimes\alpha_{m,m}\otimes 1\big)_{m\ge0} \Big\rangle \\ \nonumber
    &=  \sum_{n=1}^N \langle\omega_{n,1},\alpha_{n,1}\rangle\cdots\langle\omega_{n,n},\alpha_{n,n}\rangle
\end{align}
induced by summing up integrations levelwise.
There is a chain map 
\[
It_M\colon C^{\mathrm{s}}_{\bullet}(\mathsf{P}^{\mathrm{s}}M(a,b)) \to \mathsf{Cobar}^{\prod}_{\bullet}(C^{\mathrm{s}}(M))(a,b)
\]
defined analogously to \eqref{equation: It_X}.
The maps $It_M$ and $\int$ \eqref{equation: Chen int} are formally dual in the sense that
\[
\Big\langle \int\boldsymbol{\omega}\, , \,\alpha \Big\rangle = \Big\langle \boldsymbol{\omega}
\, , \,It_M(\alpha) \Big\rangle
\]
for any $\boldsymbol{\omega}\in\mathsf{Bar}(\mathbb{R}_a,\Omega^{\bullet}(M),\mathbb{R}_b)$ and $\alpha\in C^{\mathrm{s}}_{\bullet}(\mathsf{P}^{\mathrm{s}}M(a,b))$.
\end{remark}

Now set $a = b$, and denote the product on $C^{\mathrm{s}}_{\bullet}(\mathsf{P}^{\mathrm{s}}M(a,a))$ by $\times$. For brevity, write $1 \otimes \omega_1 \otimes \dots \otimes \omega_n \otimes 1$ as $\omega_1\cdots\omega_n$. For any $\alpha, \beta \in C^{\mathrm{s}}_{\bullet}(\mathsf{P}^{\mathrm{s}}M(a,a))$ and $\omega_1, \dots, \omega_n \in \Omega(M)$, we have
\begin{align}\label{equation: iterated integration dga map pairing}
    \Big\langle \omega_1\cdots\omega_n\, , \, It_M(\alpha\times\beta) \Big\rangle & = \Big\langle \int \omega_1\cdots\omega_n \, , \, \alpha\times\beta \Big\rangle \\ \nonumber
    & = \sum_{i=1}^n \Big\langle \int \omega_1\cdots \omega_i\,,\,\alpha\Big\rangle \Big\langle \int \omega_{i+1}\cdots \omega_n\,,\,\beta \Big\rangle \\ \nonumber
    & = \sum_{i=1}^n \Big\langle \omega_1\cdots \omega_i\,,\,It_M(\alpha)\Big\rangle \Big\langle \omega_{i+1}\cdots \omega_n\,,\,It_M(\beta) \Big\rangle \\ \nonumber
    & = \Big\langle \omega_1\cdots \omega_n\, , \,It_M(\alpha)It_M(\beta) \Big\rangle,
\end{align}
where the equality on the second line is exactly \cite[(1.6.2)]{Chen}, and the last line follows from the fact that the concatenation product on $\mathsf{Cobar}^{\prod}(C^{\mathrm{s}}_{\bullet}(M))(a,b)$ is dual to the deconcatenation coproduct on $\mathsf{Bar}(\mathbb{R}_a,\Omega^{\bullet}(M),\mathbb{R}_a)$ under the pairing~\eqref{equation: pairing bar cobar}.

Equation~\eqref{equation: iterated integration dga map pairing} suggests that $It_M$ preserves the products (compositions) up to terms that vanish under the integration pairing with differential forms. A more general and precise statement is Theorem~\ref{theorem: A infinity map I}, which we prove in Section~\ref{Section: proof of theorem A infinity extension}.

\subsubsection{The natural transformation $\mathcal{G}$}

For any $a,b\in X$, define a degree 0 linear map
\[
    G_X \colon R_a\boxtimes s^{-1}\overline{\mathcal{C}}(X)\boxtimes R_b \to \mathsf{Cobar}^{\prod}(C(X))(a,b)
\]
such that for any $\sigma\colon \Delta^n\to X$,
\[
    G_X(s^{-1}\sigma) = s^{-1}\sigma + e(\sigma) \in (R_a\otimes s^{-1}C(X)\otimes R_b)\oplus (R_a\otimes R\otimes R_b),
\]
where $e$ is the map~\eqref{equation: e C(X) to R}. Extending $G_X$ to be compatible with compositions, and using the identification
\[
R_a\boxtimes (s^{-1}\overline{\mathcal{C}}(X))^{\boxtimes 0}\boxtimes R_b \cong R \cong R_a\otimes R\otimes R_b = R_a\otimes (s^{-1}C(X))^{\otimes 0}\otimes R_b,
\]
we obtain a natural degree 0 linear map
\begin{equation}\label{equation: G_X}
    \mathcal{G}_X\colon \mathsf{Cobar}^{\boxtimes}(\mathcal{C}(X))(a,b)\to \mathsf{Cobar}^{\prod}(C(X))(a,b)
\end{equation}
for any $a,b\in X$, which is compatible with compositions. A straightforward calculation shows that $\mathcal{G}_X$ is a chain map.

\section{Construction of $\mathcal{I}$ (Theorem~\ref{theorem: A infinity map I}) and $\mathcal{H}$ (Theorem~\ref{theorem: A infinity homotopy})} 
\label{Section: acyclic model arguments}

This section establishes the existence of a natural $A_\infty$-transformation $\mathcal{I}$ extending $\{It_X\}_{X\in\mathsf{Top}}$, and of an $A_\infty$-homotopy $\mathcal{H}$ between $\mathcal{F}=\mathcal{I}\circ\mathcal{A}$ and $\mathcal{G}$. 
Both $\mathcal{I}$ and $\mathcal{H}$ are constructed via the method of acyclic models; the construction of $\mathcal{I}$ is more geometric in nature, whereas the construction of $\mathcal{H}$ is more algebraic. 
At the end of this section we provide a precise formulation of the statement that $\mathcal{I}$ is a left $A_\infty$-homotopy inverse of $\mathcal{A}$ for simply connected spaces, as announced in Remark~\ref{remark: Chen Adams inverse simply connected}.

We begin by fixing $A_\infty$ sign conventions. Our sign conventions for $A_\infty$-categories and $A_\infty$-functors follow those for $A_\infty$-algebras and $A_\infty$-morphisms in \cite{loday-vallette2012algebraic} and differ from those in \cite{lefevre2002categories}. The difference essentially arises from reversing the order of inserting objects in the multilinear maps, which, together with the Koszul sign convention for reordering graded objects, leads to a straightforward conversion rule between the two conventions. Applying this rule to the $A_\infty$-algebra and $A_\infty$-morphism signs in \cite{lefevre2002categories} recovers exactly the conventions of \cite{loday-vallette2012algebraic}. Since \cite{loday-vallette2012algebraic} does not specify signs for $A_\infty$-homotopies, we extend this conversion rule to the $A_\infty$-homotopy signs from \cite{lefevre2002categories}, ensuring all signs remain compatible with \cite{loday-vallette2012algebraic}.

For an $A_\infty$-category $\mathsf{C}$ and objects $X_0,\dots,X_n \in \mathrm{Ob}(\mathsf{C})$, the structure maps
\[
m_{n}^{\mathsf{C}} \colon \mathrm{Hom}^{\mathsf{C}}(X_0,X_1) \otimes \cdots \otimes \mathrm{Hom}^{\mathsf{C}}(X_{n-1},X_n) \to \mathrm{Hom}^{\mathsf{C}}(X_0,X_n)
\]
are of degree $n-2$, satisfying the $A_\infty$-relations
\[
\sum_{p+q+r=n} (-1)^{p+qr} 
m_{p+1+r}^{\mathsf{C}} \circ \big( 1^{\otimes p} \otimes m_q^{\mathsf{C}} \otimes 1^{\otimes r} \big) = 0, \quad \forall n \geq 1.
\]

For two $A_\infty$-categories $\mathsf{C}, \mathsf{C}'$, an $A_\infty$-functor $\mathsf{F} \colon \mathsf{C} \to \mathsf{C}'$ consists of a map 
\[
F \colon \mathrm{Ob}(\mathsf{C}) \to \mathrm{Ob}(\mathsf{C}')
\]
on objects and a collection of degree $n-1$ linear maps
\[
F_n \colon \mathrm{Hom}^{\mathsf{C}}(X_0,X_1) \otimes \cdots \otimes \mathrm{Hom}^{\mathsf{C}}(X_{n-1},X_n) \to \mathrm{Hom}^{\mathsf{C}'}\big(F(X_0),F(X_n)\big)
\]
for all $n \geq 1$ and $X_0,\dots,X_n \in \mathrm{Ob}(\mathsf{C})$, satisfying that for any $n \geq 1$,
\[
\sum_{p+q+r=n} (-1)^{p+qr} 
F_{p+1+r} \big( 1^{\otimes p} \otimes m_q^{\mathsf{C}} \otimes 1^{\otimes r} \big)
=
\sum_{\substack{k \geq 1 \\ i_1+\cdots+i_k = n}} (-1)^{\epsilon} 
m_k^{\mathsf{C}'} \big( F_{i_1} \otimes \cdots \otimes F_{i_k} \big),
\]
where
\[
\epsilon = \sum_{j=1}^k (k-j)(i_j - 1).
\]

For two $A_\infty$-functors $\mathsf{F}, \mathsf{F}' \colon \mathsf{C} \to \mathsf{C}'$ associated with the same object map $F$, an $A_\infty$-homotopy $\mathsf{H} \colon \mathsf{F} \Rightarrow \mathsf{F}'$ consists of a collection of degree $n$ linear maps
\[
H_n \colon \mathrm{Hom}^{\mathsf{C}}(X_0,X_1) \otimes \cdots \otimes \mathrm{Hom}^{\mathsf{C}}(X_{n-1},X_n) \to \mathrm{Hom}^{\mathsf{C}'}\big(F(X_0),F(X_n)\big)
\]
for all $n \geq 1$ and $X_0,\dots,X_n \in \mathrm{Ob}(\mathsf{C})$, satisfying that for any $n \geq 1$,
\begin{align*}
F_n - F'_n &= \sum_{p+q+r=n} (-1)^{p+qr} 
H_{p+1+r} \big( 1^{\otimes p} \otimes m_q^{\mathsf{C}} \otimes 1^{\otimes r} \big) \\
&\quad + \sum_{\substack{k \geq l \geq 1 \\ i_1+\cdots+i_k = n}} (-1)^{\delta} 
m_k^{\mathsf{C}'} \Big(
G_{i_1} \otimes \cdots \otimes G_{i_{l-1}} \otimes H_{i_l} \otimes F_{i_{l+1}} \otimes \cdots \otimes F_{i_k}
\Big),
\end{align*}
where
\[
\delta = k-1 + \sum_{j=1}^{k} (k-j)(i_j - 1).
\]

Under the above conventions, we focus on dg categories, i.e., $A_\infty$-categories where $m_1$ is the differential, $m_2$ the composition, and $m_n=0$ for all $n>2$.

\subsection{Proof of Theorem \ref{theorem: A infinity map I}} \label{Section: proof of theorem A infinity extension} 
Recall from Section \ref{Section: A formal dual of Chen's iterated integral map} that $It_X$ is the composition of natural chain maps
\begin{align}\nonumber
    \mathsf{C}^{\square}_{\bullet}(\mathsf{P}X)(a,b)    &\xrightarrow{\Phi_{v}} \mathrm{Tot}^{\prod}_{\bullet}(\{C^{\square}(\mathsf{P}X(a,b) \times \mathbb{\Delta}^n)\}_{n\ge0}) \\ \nonumber
     & \xrightarrow{\eta^{\square,\mathbb{\Delta}}} \mathrm{Tot}^{\prod}_{\bullet}(\{C(\mathsf{P}X(a,b) \times \mathbb{\Delta}^n)\}_{n\ge0}) \\ \label{equation: It_X original composition definition} & \xrightarrow{\mathrm{Ev}_*}  \mathrm{Tot}^{\prod}_{\bullet}(\{C(\{a\}\times X^{n}\times\{b\})\}_{n\ge0})  \xrightarrow{AW} \mathsf{Cobar}^{\prod}_{\bullet}(C(X))(a,b).
\end{align}

Clearly, $AW$ defines a functor between dg categories, natural in $X \in \mathsf{Top}$, where the composition on
\[
\big\{\mathrm{Tot}^{\prod}_{\bullet}(\{C(\{a\}\times X^{n}\times\{b\})\}_{n\ge0})\big\}_{a,b\in X}
\]
is induced by the Cartesian product of spaces:
\[
\{a\}\times X^{n_1}\times\{b\}\times\{b\}\times X^{n_2}\times\{c\} \to \{a\}\times X^{n_1+n_2}\times\{c\}.
\]
Hence it suffices to show that $\mathrm{Ev}_*\circ\eta^{\square,\mathbb{\Delta}}\circ\Phi_{v}$ is an $A_\infty$-functor between dg categories. This naturally leads one to seek associative compositions on the families
\[
\big\{\mathrm{Tot}^{\prod}_{\bullet}(\{C^{\square}(\mathsf{P}X(a,b)\times \mathbb{\Delta}^n)\}_{n\ge0})\big\}_{a,b\in X}
, \quad
\big\{\mathrm{Tot}^{\prod}_{\bullet}(\{C(\mathsf{P}X(a,b)\times \mathbb{\Delta}^n)\}_{n\ge0})\big\}_{a,b\in X},
\]
respectively, and to analyze $\mathrm{Ev}_*$, $\eta^{\square,\mathbb{\Delta}}$, and $\Phi_v$ separately.
The most natural candidate for such compositions would be induced by space maps
\[
\mathsf{P}X(a,b)\times\mathbb{\Delta}^{n_1}\times\mathsf{P}X(b,c)\times\mathbb{\Delta}^{n_2} \to \mathsf{P}X(a,c)\times\mathbb{\Delta}^{n_1+n_2},
\]
defined by concatenating paths and gluing points in $\mathbb{\Delta}^{n_1},\mathbb{\Delta}^{n_2}$ proportionally to their respective path lengths. However, such maps fail to exist when both paths have zero length, so the composition is only partially defined.

To settle this issue, we introduce a cosimplicial space
\begin{align}\label{equation: P^nX(a,b)}
    [n] \mapsto \mathsf{P}^nX(a,b) \coloneqq \big\{(\gamma,T,&t_1,\dots,t_n) \mid \\ \nonumber &  (\gamma,T) \in \mathsf{P}X(a,b),\ 0 \le t_1 \le \dots \le t_n \le T\big\},
\end{align}
whose cosimplicial structure parallels that of both $\mathsf{P}X(a,b)\times\mathbb{\Delta}^n$ and $\{a\} \times X^{n} \times \{b\}$; see also \cite[(4.2a),(4.2b)]{Wang2024}.
The families
\[
\big\{\mathrm{Tot}^{\prod}_{\bullet}(\{C^{\square}(\mathsf{P}^nX(a,b))\}_{n\ge0})\big\}_{a,b\in X},\quad
\big\{\mathrm{Tot}^{\prod}_{\bullet}(\{C(\mathsf{P}^nX(a,b))\}_{n\ge0})\big\}_{a,b\in X}
\]
both admit associative compositions induced by path concatenation together with the obvious gluing of marked points, and therefore form the morphism sets of dg categories.
Recognizing $[n]\mapsto \mathsf{P}^{n}X$ as a cosimplicial analogue of the nerve $\mathsf{N}(\mathsf{P}X)$ of $\mathsf{P}X$, we denote the resulting dg categories by
\[
(\mathsf{Tot}\,C^{\square}\circ\mathsf{cN})(\mathsf{P}X) \quad \text{and} \quad (\mathsf{Tot}\,C^{\mathbb{\Delta}}\circ\mathsf{cN})(\mathsf{P}X).
\]

There is a family of natural maps
\begin{align*}
    q_n \colon \mathsf{P}X(a,b) \times \mathbb{\Delta}^n &\to \mathsf{P}^nX(a,b)\\
    (\gamma,T,t_1,\dots,t_n) &\mapsto (\gamma,T,t_1 T,\dots,t_n T)
\end{align*}
respecting cosimplicial structures, and a family of evaluation maps
\begin{align*}
    \mathrm{ev}_n \colon \mathsf{P}^nX(a,b) &\to \{a\} \times X^n \times \{b\}\\
    (\gamma,T,t_1,\dots,t_n) &\mapsto (a,\gamma(t_1),\dots,\gamma(t_n),b)
\end{align*}
respecting cosimplicial structures and compositions, such that $\mathrm{Ev}_n = \mathrm{ev}_n \circ q_n$.  
Thus, we have a commutative diagram of natural chain maps:
\begin{equation}\label{equation: diagram J_X,1}
\begin{tikzcd}
\mathsf{C}^{\square}_{\bullet}(\mathsf{P}X)(a,b) \arrow[r,"q_*\circ \Phi_{v}"] \arrow[d,"\Phi_{v}"'] &  \mathrm{Tot}^{\prod}_{\bullet}\big(\{C^{\square}(\mathsf{P}^nX(a,b))\}_{n\ge0}\big)\arrow[d,"\eta^{\square,\mathbb{\Delta}}"] \\
\mathrm{Tot}^{\prod}_{\bullet}\big(\{C^{\square}(\mathsf{P}X(a,b) \times \mathbb{\Delta}^n)\}_{n\ge0}\big) \arrow[d,"\eta^{\square,\mathbb{\Delta}}"]              & \mathrm{Tot}^{\prod}_{\bullet}\big(\{C(\mathsf{P}^nX(a,b))\}_{n\ge0}\big)\arrow[d,"\mathrm{ev}_*"]      \\
\mathrm{Tot}^{\prod}_{\bullet}\big(\{C(\mathsf{P}X(a,b) \times \mathbb{\Delta}^n)\}_{n\ge0}\big) \arrow[r,"\mathrm{Ev}_*"]               & \mathrm{Tot}^{\prod}_{\bullet}(\{C(\{a\}\times X^{n}\times\{b\})\}_{n\ge0}),
\end{tikzcd}
\end{equation}
where $\mathrm{ev}_*$ defines a functor between dg categories.

Now, Theorem~\ref{theorem: A infinity map I} reduces to the following lemma.

\begin{lemma} \label{lemma: J_1 extends A infinity}
For any topological space $X$, the natural chain map
\[
\mathcal{J}_{X,1} \coloneqq \eta^{\square,\mathbb{\Delta}}\circ q_* \circ \Phi_v
\]
in \eqref{equation: diagram J_X,1} extends to a natural $A_\infty$-functor
\[
\mathcal{J}_{X} = \{\mathcal{J}_{X,k}\}_{k \ge 1} \colon 
\mathsf{C}^{\square}(\mathsf{P}X)
\to 
(\mathsf{Tot}\,C^{\mathbb{\Delta}} \circ \mathsf{cN})(\mathsf{P}X).
\]  
Explicitly, there exists, for each $k \ge 1$ and $a_0, \dots, a_k \in X$, a natural linear map
\[
\mathcal{J}_{X,k} \colon 
\bigotimes_{i=0}^k 
\mathsf{C}^{\square}(\mathsf{P}X)(a_{i-1}, a_i) 
\to 
\mathrm{Tot}^{\prod}\big(\{ C(\mathsf{P}^n X(a_0,a_k)) \}_{n \ge 0} \big)
\]
of degree $k-1$; and the collection $\{\mathcal{J}_{X,k}\}_{k\ge1}$ satisfies, for any $k \ge 1$,
\begin{align*}
 D^{\prod} \circ \mathcal{J}_{X,k} \;=\; &  \sum_{p+r = k-1} (-1)^{k-1} \mathcal{J}_{X,k} \circ 
(1^{\otimes p} \otimes \partial^{\square} \otimes 1^{\otimes r}) \\ 
  & + \sum_{p+r = k-2} (-1)^{p} \mathcal{J}_{X,k-1} \circ 
(1^{\otimes p} \otimes \mu \otimes 1^{\otimes r}) \\
& + \sum_{i+j = k} (-1)^{i} \mu^{\prod} \circ 
(\mathcal{J}_{X,i} \otimes \mathcal{J}_{X,j}),
\end{align*}
where $\mu$ and $\mu^{\prod}$ denote the respective compositions.
\end{lemma}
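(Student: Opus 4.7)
The plan is to proceed by induction on $k$ via the method of acyclic models. The base case $k=1$ follows immediately from the commutative diagram \eqref{equation: diagram J_X,1}: each of $\Phi_v$, $q_*$, and $\eta^{\square,\mathbb{\Delta}}$ is a natural chain map, so $\mathcal{J}_{X,1}$ is a natural chain map, which is exactly the level-$1$ instance of the stated relation.

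For the inductive step, I would assume that natural linear maps $\mathcal{J}_{X,1},\ldots,\mathcal{J}_{X,k-1}$ of the specified degrees have been constructed satisfying the stated relations through level $k-1$, and define the natural map of degree $k-2$
\[
\Theta_{X,k} := \sum_{p+r=k-2}(-1)^{p}\mathcal{J}_{X,k-1}\circ(1^{\otimes p}\otimes\mu\otimes 1^{\otimes r}) + \sum_{\substack{i+j=k\\ i,j\ge 1}}(-1)^{i}\mu^{\prod}\circ(\mathcal{J}_{X,i}\otimes\mathcal{J}_{X,j}),
\]
assembled from already-constructed data. The level-$k$ relation then reads
\[
D^{\prod}\circ\mathcal{J}_{X,k} - (-1)^{k-1}\sum_{p+r=k-1}\mathcal{J}_{X,k}\circ(1^{\otimes p}\otimes\partial^{\square}\otimes 1^{\otimes r}) = \Theta_{X,k},
\]
and a standard sign-bookkeeping calculation, invoking the level-$(\le k-1)$ relations together with $(\partial^{\square})^2=0$, $(D^{\prod})^2=0$, and Leibniz for $\mu^{\prod}$, shows that $\Theta_{X,k}$ is a cycle in the natural hom-complex between the relevant functors.

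To build a natural primitive for $\Theta_{X,k}$, I plan to invoke acyclic models. The source functor $X\mapsto\bigotimes_{i=1}^k\mathsf{C}^{\square}(\mathsf{P}X)(a_{i-1},a_i)$, viewed as a functor on spaces with $k+1$ marked points, is free on universal models $(\mathbb{E}_{n_1,\ldots,n_k},\tau_1,\ldots,\tau_k)$, where $\mathbb{E}_{n_1,\ldots,n_k}$ is the contractible space obtained by gluing $k$ ``mapping-cylinder'' pieces modeled on $I^{n_i}\times[0,1]$ end to end at distinguished points $a_0,\ldots,a_k$, and $\tau_i\colon I^{n_i}\to\mathsf{P}(\mathbb{E}_{n_1,\ldots,n_k})(a_{i-1},a_i)$ is the evident universal cube of paths; any $k$-tuple of cubical chains on $\mathsf{P}X$ arises by pullback along a continuous map $\mathbb{E}_{n_1,\ldots,n_k}\to X$. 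Since each such $\mathbb{E}_{n_1,\ldots,n_k}$ is contractible, the Moore path space $\mathsf{P}\mathbb{E}_{n_1,\ldots,n_k}(a_0,a_k)$ is contractible, and so is $\mathsf{P}^n\mathbb{E}_{n_1,\ldots,n_k}(a_0,a_k)$ for every $n\ge 0$. I would then appeal to a cosimplicial spectral sequence argument to conclude that the target complex $\mathrm{Tot}^{\prod}(\{C(\mathsf{P}^n\mathbb{E}_{n_1,\ldots,n_k}(a_0,a_k))\}_{n\ge 0})$ is acyclic in the positive degrees required for the induction to close.

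The main obstacle is precisely this last acyclicity statement. While each level $C(\mathsf{P}^n\mathbb{E}(a_0,a_k))$ has trivial positive homology, the product-totalization mixes all levels with degree shifts, and convergence is not automatic for an unbounded product. I plan to handle this by comparing the cosimplicial chain complex $[n]\mapsto C(\mathsf{P}^n\mathbb{E}(a_0,a_k))$ with the constant cosimplicial object on $C(\mathsf{P}\mathbb{E}(a_0,a_k))$ via the maps $q_n$ together with an explicit contracting homotopy on $\mathbb{E}$, showing that $q_n$ induces a quasi-isomorphism after totalization for contractible $\mathbb{E}$. Once this acyclicity is in hand, the standard acyclic models theorem supplies a natural $\mathcal{J}_{X,k}$ satisfying the level-$k$ relation, completing the induction.
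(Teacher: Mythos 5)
Your proposal takes a genuinely different route from the paper, and it contains several gaps that the paper's actual argument is specifically designed to circumvent.

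The paper does \emph{not} run an acyclic models argument directly on the functor $X\mapsto \mathrm{Tot}^{\prod}(\{C(\mathsf{P}^n X(a_0,a_k))\}_{n\ge0})$. Instead, it isolates the combinatorial core of the problem on the cosimplicial families of scaled simplices $\{\mathbb{\Delta}^n_T\}_{n\ge0}$ (Lemma~\ref{lemma: xi^k}), constructing chains $\xi^{k}\langle\lambda_0\mid\dots\mid\lambda_k\rangle$ by an \emph{explicit} inductive homotopy: the deformation retractions $h_n$ of $\mathbb{\Delta}^n_T$ onto $\mathbb{\Delta}^n_0$ are cosimplicially compatible and therefore induce a levelwise contracting homotopy $H$ on the totalization, with $D^{\prod}H + H D^{\prod} = \mathrm{id} - \mathrm{c}$; the obstruction term $\mathrm{c}(\Xi^k)$ is then shown to vanish by direct inspection (it lands in $C^\square(\mathrm{pt})$ and is killed for degree reasons, with one low case checked by hand). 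Afterwards $\mathcal{J}_{X,k}$ is produced \emph{explicitly} from $\xi^{k-1}$ by concatenating the input cubes of Moore paths and marking them at the points $\tau^{k-1}_{n,i}(y,T_1(x_1),\dots,T_k(x_k))$, then finally post-composing with $\eta^{\square,\mathbb{\Delta}}$. This last composition is essential: as noted in the remark following the proof, $\mathcal{J}'_{X,k}$ alone satisfies the $A_\infty$ relations only up to reshuffling cube coordinates, and passing to normalized simplicial chains via the triangulation map kills precisely these discrepancies.

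Your approach has three concrete gaps. First, you correctly flag that the product totalization need not inherit acyclicity levelwise, but your proposed fix (comparison to the constant cosimplicial object via the $q_n$) is left unargued and is not the mechanism the paper uses: the paper never asserts acyclicity of any totalization; it instead builds a concrete primitive and checks the obstruction vanishes directly, which avoids the convergence question entirely. Second, the model spaces you propose, $\mathbb{E}_{n_1,\dots,n_k}$ ``modeled on $I^{n_i}\times[0,1]$'', do not capture cubes of \emph{Moore} paths, whose length function $T_i\colon I^{n_i}\to[0,\infty)$ varies; you really have a continuum of models indexed by length functions, and your setup does not say how the construction is to depend continuously on $T_i$. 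This is precisely what property (1) of Lemma~\ref{lemma: xi^k} supplies (the $\xi^k_n$ are of the form $\sum_i c^k_{n,i}\tau^k_{n,i}(\cdot,\lambda_0,\dots,\lambda_k)$ with $\tau^k_{n,i}$ jointly continuous in the cube and scale parameters), and without an analogue you cannot pull the model value back along a continuous map to get a well-defined, natural chain. Third, even if both of those are repaired, an acyclic models argument on normalized cubical chains will produce the relations only up to degenerate cubes arising from permuting cube factors; the paper handles this by inserting $\eta^{\square,\mathbb{\Delta}}$ at the right stage, and your induction scheme does not address where this correction enters.

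In short, your strategy would require you to (i) prove acyclicity of a product totalization over a nonconverging filtration, (ii) formulate acyclic models over a parameter family of length functions with the requisite continuity, and (iii) account for the cube-coordinate permutation subtlety. The paper's refinement sidesteps all three by constructing $\xi^k$ explicitly on $\{\mathbb{\Delta}^n_T\}$ with an explicit, cosimplicially compatible contraction, and then assembling $\mathcal{J}_{X,k}$ from $\xi^{k-1}$ and the triangulation map.
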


Once Lemma~\ref{lemma: J_1 extends A infinity} is proved, Theorem~\ref{theorem: A infinity map I} follows by setting 
\[
\mathcal{I}_{X}\coloneq AW\circ\mathrm{ev}_*\circ\mathcal{J}_X.
\]

In preparation for the proof of Lemma~\ref{lemma: J_1 extends A infinity}, for any $T \ge 0$ and integer $n \ge 0$, set
\[
\mathbb{\Delta}^n_T \coloneq \{ (t_1,\dots,t_n) \in \mathbb{R}^n \mid 0 \le t_1 \le \dots \le t_n \le T \},
\]
which is the standard $n$-simplex scaled by $T$. In particular, $\mathbb{\Delta}^n_0$ consists of a single point 0.
For any $T \ge 0$, the assignment $[n] \mapsto \mathbb{\Delta}^n_T$ forms a cosimplicial space analogous to the standard one at $T=1$.
Indeed, $\{\mathbb{\Delta}^n_T\}_{n \ge 0}$ identifies with the cosimplicial subspace of $\{\mathsf{P}^n\{a\}(a,a)\}_{n \ge 0}$ consisting of constant marked paths of length $T$ in the singleton space $\{a\}$.
The family $\{\mathbb{\Delta}^n_T\}_{n \ge 0,\, T \ge 0}$ inherits an associative composition
\[
\circ : \mathbb{\Delta}^{n_1}_{T_1} \times \mathbb{\Delta}^{n_2}_{T_2} \to \mathbb{\Delta}^{n_1+n_2}_{T_1 + T_2}
\]
from $\{\mathsf{P}^n\{a\}(a,a)\}_{n \ge 0}$, inducing an associative composition 
\[
\circ : \mathrm{Tot}^{\prod}_{\bullet}(\{C^{\square}(\mathbb{\Delta}^n_{T_1})\}_{n \ge 0}) \otimes 
\mathrm{Tot}^{\prod}_{\bullet}(\{C^{\square}(\mathbb{\Delta}^n_{T_2})\}_{n \ge 0}) \to 
\mathrm{Tot}^{\prod}_{\bullet}(\{C^{\square}(\mathbb{\Delta}^n_{T_1 + T_2})\}_{n \ge 0}),
\]
which is a chain map.

The sequence $v=\{v_n\in C^{\square}_n(\mathbb{\Delta}^n)\}_{n\ge0}$ in Remark~\ref{remark: when to triangulate cubes} identifies with a 0-cycle
\[
\tilde{v}=(\tilde{v}_n)_{n\ge0}=((-1)^nv_n)_{n\ge0} \in \mathrm{Tot}^{\prod}_{\bullet}(\{C^{\square}(\mathbb{\Delta}^n)\}_{n \ge 0})
\]
such that $v_0$ is the fudamental cycle of $\mathbb{\Delta}^0$. For any $T\ge0$, denote by  
\[
s_T\colon\mathbb{\Delta}^n\to\mathbb{\Delta}^n_T
\]
the scaling map, and define
\[
  \xi^{0}\langle T\rangle \coloneq (s_T)_*(\tilde{v}) \in \mathrm{Tot}^{\prod}_{0}(\{C^{\square}(\mathbb{\Delta}^n_T)\}_{n \ge 0}).
\]

\begin{lemma}\label{lemma: xi^k}
  There exists a family of chains
  \[
   \left\{\xi^{k}\langle\lambda_0
\mid\dots\mid\lambda_k\rangle \in \mathrm{Tot}^{\prod}_{k}\big(\{C^{\square}(\mathbb{\Delta}^n_{\lambda_0+\dots+\lambda_k})\}_{n\ge0}\big)\right\}_{k\ge0,\,\lambda_0,\dots,\lambda_k\ge0}
  \]
  with $\xi^{0}\langle\lambda_0\rangle$ defined above, satisfying the following two properties:
  \begin{enumerate}
      \item      
      For any $k\ge0$ and $\lambda_0,\dots,\lambda_k\in[0,\infty)$, the chain $\xi^{k}\langle\lambda_0\mid\dots\mid\lambda_k\rangle$ depends continuously on $\lambda_0,\dots,\lambda_k$. More precisely, for any $k,n \ge 0$, there exists an integer $N(k,n)\ge 1$, and for each $1 \le i \le N(k,n)$ a scalar $c^{k}_{n,i} \in R$ and a continuous map 
      \[
      \tau^{k}_{n,i} \colon I^{k+n}\times [0,\infty)^{k+1}\to\mathbb{R}^n,
      \]
      such that: 
      for any $x\in I^{k+n}$ and $\lambda_0,\dots,\lambda_k \in [0,\infty)$,
      $
      \tau^{k}_{n,i}(x,\lambda_0,\dots,\lambda_k)$ lies in $\mathbb{\Delta}^n_{\lambda_0+\cdots+\lambda_k};
      $
      and for any $\lambda_0,\dots,\lambda_k \in [0,\infty)$, the $n$-component of $\xi^{k}\langle\lambda_0 \mid \dots \mid \lambda_k\rangle$ is given by
      \[
      \xi^{k}_n\langle\lambda_0 \mid \dots \mid \lambda_k\rangle = \sum_{i=1}^{N(k,n)} c^{k}_{n,i}\tau^{k}_{n,i}(\,\cdot\,,\lambda_0,\dots,\lambda_k).
      \]

      \item For any $k\ge0$ and $\lambda_0,\dots,\lambda_k\ge0$,
  \begin{align}\label{equation: xi^k relation}  D^{\prod}(\xi^{k}\langle\lambda_0\mid\dots\mid\lambda_k\rangle) &= \sum_{i=0}^{k-1} (-1)^i \xi^{k-1}\langle\lambda_0\mid\dots\mid\lambda_{i}+\lambda_{i+1}\mid\dots\mid\lambda_k\rangle \\ \nonumber
     &\hspace{0.5em} +  \sum_{i=0}^{k-1} (-1)^{i-1}\xi^{i}\langle\lambda_0\mid\dots\mid\lambda_i\rangle \circ \xi^{k-1-i}\langle\lambda_{i+1}\mid\dots\mid\lambda_k\rangle .
  \end{align}
    \end{enumerate}
\end{lemma}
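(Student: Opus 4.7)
The plan is induction on $k$. For the base case $k=0$, the definition $\xi^0\langle T\rangle = (s_T)_*\tilde v$ is already given; the continuity condition (1) holds with $\tau^{0}_{n,1}(x, T) = s_T(v_n(x))$, and relation \eqref{equation: xi^k relation} reduces to $D^{\prod}\xi^0\langle T\rangle = 0$, which follows from $\tilde v$ being a $0$-cycle and $(s_T)_*$ being a chain map.

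For the inductive step, suppose $\xi^j$ has been constructed for all $j < k$, and let $R^k\langle\lambda_0\mid\cdots\mid\lambda_k\rangle$ denote the right-hand side of \eqref{equation: xi^k relation}. By the inductive hypothesis, together with the fact that $\circ$ is bilinear and preserves families of the form described in (1), $R^k$ satisfies continuity condition (1). A direct computation, applying $D^{\prod}$ to each summand of $R^k$ and substituting the inductive expression for $D^{\prod}\xi^j$, shows that $D^{\prod}R^k = 0$; this is the standard $A_\infty$-coherence check, in which cancellations arise from merging two adjacent $\lambda$'s in two different orders, from merging versus splitting at different positions, and from associativity of $\circ$.

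To produce $\xi^k$ bounding $R^k$, I would use the contractibility of each simplex $\mathbb{\Delta}^n_T$. The scaling deformation
\[
H_n : \mathbb{\Delta}^n_T \times I \to \mathbb{\Delta}^n_T, \quad ((t_1,\dots,t_n), s) \mapsto ((1-s)t_1,\dots,(1-s)t_n)
\]
induces a chain homotopy $h_n$ on $C^{\square}(\mathbb{\Delta}^n_T)$ with $\partial^{\square} h_n + h_n \partial^{\square} = \mathrm{id} - p_n$, where $p_n$ collapses onto a $0$-chain at the origin. Combined with the standard cosimplicial contraction of $[n]\mapsto\mathbb{\Delta}^n_T$ onto its zeroth level, this yields a contracting chain homotopy $h^{\prod}$ on $\mathrm{Tot}^{\prod}(\{C^{\square}(\mathbb{\Delta}^n_T)\}_{n\ge 0})$ that is continuous in $T$, and more generally in any parameters on which its input depends continuously. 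For $k\ge 2$, $R^k$ has positive degree, so setting $\xi^k = h^{\prod}(R^k)$ works. For $k=1$, $R^1 = \xi^0\langle\lambda_0+\lambda_1\rangle - \xi^0\langle\lambda_0\rangle \circ \xi^0\langle\lambda_1\rangle$ is a $0$-cycle whose two summands both represent the generator of $H_0\cong R$, and an explicit continuous primitive can be built from a reparametrization homotopy interpolating between the two ways of parametrizing $\mathbb{\Delta}^n_{\lambda_0+\lambda_1}$.

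The main obstacle I anticipate is arranging for the primitive $\xi^k$ to satisfy the continuity condition (1) in its strong form, namely a finite $R$-linear combination of continuous parametrized cubes. This rules out a purely abstract acyclic-models argument and instead requires a sufficiently explicit chain contraction, supplied here by the scaling deformation retraction combined with a concrete cosimplicial contraction. A secondary subtlety is the $k=1$ case, where the primitive must live in degree $0$ and is therefore obtained by a direct interpolation rather than by applying the general contracting homotopy.
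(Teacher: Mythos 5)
Your plan matches the paper's proof in its essential structure: induct on $k$, show the right-hand side $\Xi^{k}$ (your $R^{k}$) is a cycle using the inductive hypothesis, and then produce a primitive by applying an explicit contracting homotopy coming from the scaling deformation retraction of $\mathbb{\Delta}^{n}_{T}$ onto the origin. The scaling homotopy is indeed the key ingredient and you correctly identify that it keeps everything in the required ``finite $R$-linear combination of continuous parametrized cubes'' form.

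Two small points where the paper is cleaner than what you propose. First, you do not actually need the additional ``standard cosimplicial contraction onto the zeroth level.'' The scaling retraction $h_{n}(s,t_{1},\dots,t_{n}) = (st_{1},\dots,st_{n})$ already commutes with all cosimplicial structure maps (they all fix the origin $\mathbb{\Delta}^{n}_{0}$), so the levelwise operator $H = (h_{n})_{*}(\mathrm{id}_{[0,1]} \times -)$ is by itself a homotopy on $\mathrm{Tot}^{\prod}$ satisfying $D^{\prod}H + HD^{\prod} = \mathrm{id} - \mathrm{c}$, where $\mathrm{c}$ collapses each level to $C^{\square}(\mathrm{pt})$. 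The residual $\mathrm{c}(\Xi^{k})$ has its $n$-component in degree $k-1+n$ of $C^{\square}_{\bullet}(\mathrm{pt})$, which vanishes except when $k-1+n = 0$, i.e.\ $k=1$, $n=0$. Second, for the $k=1$ case you propose an ad hoc reparametrization primitive; in fact the \emph{same} homotopy $H$ produces $\xi^{1} = H(\Xi^{1})$, and the only remaining work is to verify directly that $\mathrm{c}_{0}(\Xi^{1}_{0}\langle\lambda_{0}\mid\lambda_{1}\rangle)=0$, which reduces to two occurrences of the fundamental class of a point cancelling. (Also, a small slip: you write that for $k=1$ the primitive must live in degree $0$, but $\xi^{1} \in \mathrm{Tot}^{\prod}_{1}$, i.e.\ degree $1$; $R^{1}$ is the degree-$0$ cycle being primitivized.) Your approach would work if completed, but the paper's single uniform homotopy with a one-line degree count is the tidier route.
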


\begin{proof}
We prove the lemma by induction on $k$.
Once a choice of $\xi^{k}$ has been made for some $k$, it will remain fixed in all subsequent steps.

First, for $k=0$, 
$\xi^0\langle \lambda_0 \rangle = (s_{\lambda_0})_*(\tilde{v})$ depends continuously on $\lambda_0$, and satisfies
\[
D^{\prod}(\xi^0\langle \lambda_0 \rangle) = (s_{\lambda_0})_*(D^{\prod} \tilde{v}) = 0.
\]

Next, assume the lemma holds for all $k' < k$ with $k \ge 1$, and let
\[
\Xi^k\langle \lambda_0 \mid \dots \mid \lambda_k \rangle
\]
denote the right-hand side of \eqref{equation: xi^k relation}.  
By the inductive hypothesis, a straightforward computation shows that
\[
D^{\prod}(\Xi^k\langle \lambda_0 \mid \dots \mid \lambda_k \rangle) = 0,
\]
so $\Xi^k\langle \lambda_0 \mid \dots \mid \lambda_k \rangle$ is a cycle.

We now construct a bounding chain of $\Xi^k\langle \lambda_0 \mid \dots \mid \lambda_k \rangle$ that depends continuously on $\lambda_0,\dots,\lambda_k$. 
For $T = \lambda_0 + \dots + \lambda_k$, consider the maps
\[
h_n \colon [0,1]\times \mathbb{\Delta}^n_T  \to \mathbb{\Delta}^n_T, \quad
(s,t_1, \dots, t_n) \mapsto (s t_1, \dots, s t_n), \quad n \ge 0.
\]
These define deformation retractions of $\mathbb{\Delta}^n_T$ onto the point $\mathbb{\Delta}^n_0 \subset \mathbb{\Delta}^n_T$, compatible with the cosimplicial structure maps.

Define a linear homotopy operator
\begin{align*}
H = \{H_n\}_{n \ge 0} \colon \mathrm{Tot}^{\prod}_\bullet(\{ C^\square(\mathbb{\Delta}^n_T) \}_{n \ge 0})
&\to \mathrm{Tot}^{\prod}_{\bullet+1}(\{ C^\square(\mathbb{\Delta}^n_T) \}_{n \ge 0})\\
(x_n)_{n \ge 0} &\mapsto \big( (h_n)_*(\mathrm{id}_{[0,1]}\times x_n) \big)_{n \ge 0}.
\end{align*}
Then
\[
D^{\prod} \circ H + H \circ D^{\prod} = \mathrm{id} - \mathrm{c},
\]
where $\mathrm{c} = \{ \mathrm{c}_n \}_{n \ge 0}$ is induced by the contraction maps $h_n(\cdot,0)$. Thus,
\[
D^{\prod}\big(H(\Xi^k\langle \lambda_0 \mid \dots \mid \lambda_k \rangle)\big) 
= \Xi^k\langle \lambda_0 \mid \dots \mid \lambda_k \rangle - \mathrm{c}(\Xi^k\langle \lambda_0 \mid \dots \mid \lambda_k \rangle).
\]
It remains to show that 
\[
\mathrm{c}(\Xi^k\langle \lambda_0 \mid \dots \mid \lambda_k \rangle) = 0;
\] then $H(\Xi^k\langle \lambda_0 \mid \dots \mid \lambda_k \rangle)$ gives the desired bounding chain. 

Since $h_n(\cdot,0)$ factors through $\mathbb{\Delta}^n_0$, we can view each component
\[
\mathrm{c}_n(\Xi^k_n\langle\lambda_0\mid\dots\mid\lambda_k\rangle),\quad n\ge0,
\]
of the $(k-1)$-chain $\mathrm{c}(\Xi^k\langle\lambda_0\mid\dots\mid\lambda_k\rangle)$ as lying in $C^{\square}_{\bullet}(\mathbb{\Delta}^n_0)=C^{\square}_{\bullet}(\mathrm{pt})$, which vanishes in positive degrees. Since each such component has degree $k-1+n$, it vanishes automatically if $k>1$ or $n>0$; for the exceptional case $k=1$ and $n=0$, we have 
\begin{align*}
    \mathrm{c}_0(\Xi^1_0\langle\lambda_0\mid\lambda_1\rangle) & = \mathrm{c}_0(\xi^0_0\langle\lambda_0+\lambda_1\rangle-\xi^0_0\langle\lambda_0\rangle\circ\xi^0_0\langle\lambda_1\rangle)\\
    & = \mathrm{c}_0((s_{\lambda_0+\lambda_1})_*(v_0)) - \mathrm{c}_0((s_{\lambda_0})_*(v_0)\circ (s_{\lambda_1})_*(v_0)),
\end{align*}
which is the difference of two terms both equal to the fundamental cycle of $\mathbb{\Delta}^0_0=\mathrm{pt}$, and is therefore zero. This completes the proof.
\end{proof}

\begin{proof}[Proof of Lemma \ref{lemma: J_1 extends A infinity}]
   Fix once and for all the family $\{\xi^k\langle\lambda_0\mid\dots\mid\lambda_n\rangle\}$, or equivalently, a collection of scalars and continuous maps
   \[
   \big\{c^{k}_{n,i} \in R,\,\tau^{k}_{n,i}\colon I^{k+n}\times[0,\infty)^{k+1}\to \mathbb{R}^n\big\}_{k\ge0,\,n\ge0,\, 1\le i\le N(k,n)},
   \]
   provided by Lemma \ref{lemma: xi^k}. 
   
   Let $k\ge1$, and consider points $a_0,\dots,a_k\in X$ together with singular cubes 
   \[
       \sigma_j\colon I^{d_j}\to\mathsf{P}X(a_{j-1},a_j),\quad
       x \mapsto (\gamma_j(x),T_j(x)),  \quad 1\le j\le k,
   \]
   where $T_j\colon I^{d_j}\to[0,\infty)$ and $\gamma_j(x)\colon [0,T_j(x)]\to X$ is a path. 
   
   For any $n\ge0$ and $1\le i\le N(k-1,n)$, define a continuous map
   \begin{align*}
       J_{k,n,i}&\colon I^{d_1}\times\dots\times I^{d_k}\times I^{k-1+n}  \to \mathsf{P}^nX(a_0,a_k) \\
       &(x_1,\dots,x_k,y) 
       \mapsto \big(\sigma_1(x_1)*\dots*\sigma_k(x_k),\,\tau^{k-1}_{n,i}(y,T_1(x_1),\dots, T_k(x_k))\big).
   \end{align*}
   Each $J_{k,n,i}$ determines a normalized cubical chain on $\mathsf{P}^nX(a_0,a_k)$. 
   
   We then define, for every $k\ge1$, an $R$-linear map
   \begin{align*}
   \mathcal{J}'_{X,k}\colon \bigotimes_{1 \le j \le k}\mathsf{C}^{\square}(\mathsf{P}X)(a_{j-1}, a_j) &\to \mathrm{Tot}^{\prod}\big(\{ C^{\square}(\mathsf{P}^n X(a_0,a_k)) \}_{n \ge 0} \big) \\
       \mathcal{J}'_{X,k}(\sigma_1\otimes\dots\otimes \sigma_k)
   &=\bigg(\,\sum_{i=1}^{N(k-1,n)}(-1)^{(k-1)(d_1+\dots+d_k)}c^{k-1}_{n,i}J_{k,n,i}\bigg)_{n\ge0},
   \end{align*}
   and finally define
   \[   \mathcal{J}_{X,k}=\eta^{\square,\mathbb{\Delta}}\circ\mathcal{J}'_{X,k}.
   \]  
   By construction, $\{\mathcal{J}_{X,k}\}_{k\ge1}$ is an $A_\infty$-functor extending $\mathcal{J}_{X,1}=\eta^{\square,\mathbb{\Delta}}\circ q_*\circ\Phi_v$.
\end{proof}

\begin{remark}
There is a subtlety: the collection $\{\mathcal{J}'_{X,k}\}_{k\ge1}$ is \emph{not} an $A_\infty$-functor extending $\mathcal{J}'_{X,1}= q_*\circ\Phi_v$, as the $A_\infty$-functor relations hold only up to switching cube coordinates in normalized cubical singular chains. 
To illustrate, consider the relation between $\mathcal{J}'_{X,1}$ and $\mathcal{J}'_{X,2}$. 
For $\sigma_i\colon I^{d_i}\to \mathsf{P}X(a_{i-1},a_i)$, one needs to compare
\[
D^{\prod}(\mathcal{J}'_{X,2}(\sigma_1\otimes\sigma_2)) + \mathcal{J}'_{X,2}(\partial(\sigma_1\otimes\sigma_2))
\]
with
\[
\mathcal{J}'_{X,1}(\sigma_1\circ\sigma_2) - \mathcal{J}'_{X,1}(\sigma_1)\circ\mathcal{J}'_{X,1}(\sigma_2).
\]
For any $n\ge0$, the $C^{\square}(\mathsf{P}^nX(a_0,a_2))$-component of $\mathcal{J}'_{X,1}(\sigma_1)\circ\mathcal{J}'_{X,1}(\sigma_2)$ is a weighted sum, over $n_1+n_2=n$ and $1\le i\le N(0,n)$, of chains of the form
\begin{align*}
&I^{d_1}\times I^{n_1}\times I^{d_2}\times I^{n_2} \to \mathsf{P}^nX(a_0,a_2),\\
&(x_1,y_1,x_2,y_2) \mapsto \Big(\sigma_1(x_1)*\sigma_2(x_2),\,
\tau^0_{n,i}(y_1,T_1(x_1)),\,T_1(x_1)+\tau^0_{n,i}(y_2,T_2(x_2))\Big).
\end{align*}
The corresponding part in 
$D^{\prod}(\mathcal{J}'_{X,2}(\sigma_1\otimes\sigma_2)) + \mathcal{J}'_{X,2}(\partial(\sigma_1\otimes\sigma_2))$
is, however, a weighted sum of chains of the form
\begin{align*}
&I^{d_1}\times I^{d_2}\times I^{n_1}\times I^{n_2} \to \mathsf{P}^nX(a_0,a_2),\\
&(x_1,x_2,y_1,y_2) \mapsto \Big(\sigma_1(x_1)*\sigma_2(x_2),\,
\tau^0_{n,i}(y_1,T_1(x_1)),\,T_1(x_1)+\tau^0_{n,i}(y_2,T_2(x_2))\Big).
\end{align*}
Thus, the two expressions differ by switching the $I^{d_2}$ and $I^{n_1}$ coordinates, with signs changing accordingly. 
This difference vanishes upon passing to normalized simplicial singular chains, since $\eta^{\square,\mathbb{\Delta}}$ is defined by a sum over all permutations of the cube coordinates, each with its signature.
\end{remark}

\begin{remark}\label{remark: unital A infinity functor}
By construction, $\xi^k\langle \lambda_0 \mid \dots \mid \lambda_k \rangle = 0$ if $k>0$ and some $\lambda_i = 0$.  
For $k=0$, we have $\xi^0_n\langle 0 \rangle = 0$ ($n>0$) and $\xi^0_0 = \mathrm{id}_{\mathrm{pt}} \in C^{\square}_0(\mathbb{\Delta}^0_0) = C^{\square}_0(\mathrm{pt})$.  
Consequently, $\mathcal{J}_X$ is a unital $A_\infty$-functor, in the sense that $\mathcal{J}_{X,1}$ respects identity morphisms and $\mathcal{J}_{X,k}$ ($k>1$) vanishes whenever one of its inputs is an identity morphism.  
It follows that the $A_\infty$-functors $\mathcal{I}_X$ and $\mathcal{F}_X$ are also unital.
\end{remark}

\subsection{Proof of Theorem \ref{theorem: A infinity homotopy}} We need the following acyclicity lemma.

\begin{lemma} \label{lemma: pr_0 quasi-isom}
    For any contractible topological space $X$ and $a,b\in X$, the projection chain map
    \[
        \mathrm{pr}_0 \colon \mathsf{Cobar}^{\prod}(C(X))(a,b) \to R_a\otimes R_b,\qquad
        (x_n)_{n\ge0} \mapsto x_0
    \]  
    is a quasi-isomorphism. Hence, $H_{\bullet}(\mathsf{Cobar}^{\prod}(C(X))(a,b)) \cong R$ is concentrated in degree zero.
\end{lemma}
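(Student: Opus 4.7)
The plan is to compute $H_\bullet(\mathsf{Cobar}^{\prod}(C(X))(a,b))$ by analyzing the spectral sequence associated with the cosimplicial-level filtration, using the contractibility of $X$ to collapse the $E_1$-page onto a single diagonal.

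Concretely, I would filter $M:=\mathsf{Cobar}^{\prod}(C(X))(a,b)$ by
\[
F^p = \prod_{n \geq p} R_a \otimes (s^{-1}C(X))^{\otimes n} \otimes R_b, \qquad p \ge 0.
\]
This decreasing filtration is complete and Hausdorff: since the totalization is a product, $\bigcap_p F^p = 0$ and $\varprojlim_p M/F^p = M$. The differential $D^{\prod} = \partial + \delta$ respects the filtration, with $\partial$ preserving it and $\delta$ raising it by one. The $d_0$-differential is $\partial$, so
\[
E_1^{p,q} = H_{p+q}\bigl(R_a \otimes (s^{-1}C(X))^{\otimes p} \otimes R_b\bigr).
\]
Because $X$ is contractible and $C(X)$ is free over $R$, the K\"unneth formula gives $E_1^{p,q} = R$ along the diagonal $q = -2p$ and zero elsewhere; a generator is represented by $a \otimes x_0 \otimes \cdots \otimes x_0 \otimes b$ for any fixed $x_0 \in X$.

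Next, I would compute $d_1 \colon E_1^{p,-2p} \to E_1^{p+1,-2(p+1)}$, induced by the alternating sum $\delta = \sum_{i=0}^{p+1}(-1)^i \delta_i$. The Alexander--Whitney coproduct acts as the identity on the $H_0$-generator, and the two boundary coface maps $\delta_0, \delta_{p+1}$ (which insert the basepoints $[a]$ and $[b]$) also induce the identity since $X$ is path-connected. Consequently, $d_1 = 1$ when $p$ is odd and $d_1 = 0$ when $p$ is even, so the $d_1$-complex
\[
R \xrightarrow{0} R \xrightarrow{1} R \xrightarrow{0} R \xrightarrow{1} \cdots
\]
has homology $R$ at $p=0$ and zero elsewhere. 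Thus $E_2 = E_\infty$ is the single entry $R$ at $(0,0)$.

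Finally, since the filtration is complete Hausdorff and the spectral sequence collapses at $E_2$ (so $E_r^{p,q}$ stabilizes in each bidegree), strong convergence holds: $H_0(M) \cong R$ and $H_k(M) = 0$ for $k \ne 0$. The projection $\mathrm{pr}_0$ is precisely the quotient $F^0 = M \twoheadrightarrow F^0/F^1 = R_a \otimes R_b$, which is the edge map of the spectral sequence, so it induces the identity on $R$. The main technical subtlety I anticipate is justifying strong convergence for a product totalization, usually delicate for unbounded filtrations, but the collapse at $E_2$ with a single surviving entry makes it routine.
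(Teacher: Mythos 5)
The paper's own proof of this lemma is a one-sentence citation to Irie's \cite[Lemma 8.3]{Irie}, so it offers no explicit argument to compare against; your spectral-sequence computation is therefore a genuine substitute rather than a variant of the paper's reasoning. Your computation is correct in its substance: the cosimplicial filtration $F^p = \prod_{n\ge p}(\cdots)$ is exhaustive, Hausdorff, and complete (since $\varprojlim F^p = 0$ and $\varprojlim^1 F^p = 0$ for a product), the associated graded carries the internal differential $\partial$, the K\"unneth collapse places $R$ at $q=-2p$ (total degree $-p$) on $E_1$, the $d_1$-differential is the alternating sum of cofaces all acting as the identity on $H_0$ by path-connectedness, and the resulting complex $R\xrightarrow{0}R\xrightarrow{1}R\xrightarrow{0}\cdots$ leaves only $E_2^{0,0}=R$. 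The identification of $\mathrm{pr}_0$ with the edge map $M\twoheadrightarrow F^0/F^1$ is also correct.

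The one place that deserves more than a passing remark is the convergence of the spectral sequence of an unbounded decreasing filtration on a product totalization. You flag it and wave at "collapse at $E_2$ with a single surviving entry," which is the right intuition, but it is worth a line to tie it down: completeness of the filtration gives Boardman conditional convergence, and since each $E_1^{p,q}$ is either $R$ or $0$ and $E_r^{p,q}$ stabilizes from $r=2$ onward, the derived $E_\infty$ obstruction $RE_\infty$ vanishes, whence conditional convergence upgrades to strong convergence onto $H_\bullet(M)$. With that sentence added the argument is airtight. An alternative route, possibly closer to what Irie does, would be to split $C(X)\simeq R\oplus \overline{C}$ using a contraction of $X$ and exhibit an explicit contracting homotopy of $\ker(\mathrm{pr}_0)$, bypassing convergence questions entirely; either approach establishes the lemma, but your version has the virtue of being self-contained given standard spectral-sequence technology.
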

\begin{proof}
    This is an easy consequence of \cite[Lemma 8.3]{Irie}.
\end{proof}

Now we begin the proof of Theorem \ref{theorem: A infinity homotopy}. Explicitly, we need to show that there exists, for any topological space $X$, $k\ge1$ and $a_0,\dots,a_k\in X$, a natural linear map
\[
\mathcal{H}_{X,k} \colon 
\bigotimes_{1 \le i \le k} \mathsf{Cobar}^{\boxtimes}(\mathcal{C}(X))(a_{i-1},a_i) 
\to
\mathsf{Cobar}^{\prod}(C(X))(a_0,a_k),
\]
and the collection $\{\mathcal{H}_{X,k}\}_{k\ge1}$ satisfies, for any $k \ge 1$,
\begin{align}\nonumber
\mathcal{F}_{X,k} - \mathcal{G}_{X,k}
= \; & D^{\prod} \circ \mathcal{H}_{X,k} 
+ \sum_{p+r = k-1} (-1)^{k-1} \, \mathcal{H}_{X,k} \circ 
\big(1^{\otimes p} \otimes D^{\boxtimes} \otimes 1^{\otimes r}\big) \\ \label{equation: theorem A inifity homotopy}
& + \sum_{p+r = k-2} (-1)^p \, \mathcal{H}_{X,k-1} \circ 
\big(1^{\otimes p} \otimes \mu^{\boxtimes} \otimes 1^{\otimes r}\big) \\ \nonumber
& + \sum_{i + j = k} (-1)^{i}
\Big(
\mu^{\prod} \circ \bigl(\mathcal{H}_{X,i} \otimes \mathcal{F}_{X,j}\bigr)
+ \mu^{\prod} \circ \big(\mathcal{G}_{X,i} \otimes \mathcal{H}_{X,j}\big)
\Big),
\end{align}
where $\mu^{\boxtimes}$ and $\mu^{\prod}$ denote the respective compositions in $\mathsf{Cobar}^{\boxtimes}$ and $\mathsf{Cobar}^{\prod}$.

Consider the following statement $\mathrm{S}(k,d)$ for integers $k\ge 1$ and $d\ge 0$:
\begin{itemize}
    \item $\mathrm{S}(k,d)$: For any topological space $X$ and $a_0,\dots,a_k\in X$, the natural linear maps 
    $\mathcal{H}_{X,k'}$ can be defined on all tensors
    \[
    x_1\otimes\cdots\otimes x_{k'}\in\bigotimes_{i=1}^{k'}\mathsf{Cobar}^{\boxtimes}(\mathcal{C}(X))(a_{i-1},a_i)
    \]
    with $k'<k$ and $\deg x_1+\cdots+\deg x_{k'}\le d$, 
    in such a way that \eqref{equation: theorem A inifity homotopy} holds on these elements.
\end{itemize}
Theorem \ref{theorem: A infinity homotopy} is equivalent to the assertion that $\mathrm{S}(k,d)$ holds for all $k\ge 1$ and $d\ge 0$. 

First of all, we set $\mathcal{H}_{X,k}$ to be zero if any of its inputs is a multiple of an identity morphism; then \eqref{equation: theorem A inifity homotopy} holds on such inputs by Remark~\ref{remark: unital A infinity functor} and the fact that $\mathcal{G}_X$ respects identities.

Next, we prove $\mathrm{S}(k,d)$ for all $k,d$ by induction on $(k,d)$ in lexicographic order:
\[
(k',d') < (k,d) \quad \text{if either (i) $k'<k$, or (ii) $k'=k$ and $d'<d$}.
\]
For the induction, we denote by $\mathcal{H}^d_{X,k}$ the restriction of the (yet to be constructed) map 
$\mathcal{H}_{X,k}$ to elements of total internal degree $\le d$; similarly, we write $\mathcal{F}^d_{X,k}$ and $\mathcal{G}^d_X$ for the corresponding restrictions of $\mathcal{F}_{X,k}$ and $\mathcal{G}_X$. 
Once defined for a pair $(k,d)$, the maps $\mathcal{H}^{d}_{X,k}$ are not modified in any subsequent steps.

Fix $(k,d)$ with $k\ge0$ and $d\ge0$, suppose $\mathrm{S}(k',d')$ holds for all $(k',d')<(k,d)$, we aim to construct 
$\mathcal{H}^{d}_{X,k}$ with the desired properties. (Note that the assumption for $(k,d)=(1,0)$ is vacuumly true.) It suffices to define 
\[
\mathcal{H}^{d}_{X,k}(x_1\otimes\dots\otimes x_k)
\]
for all 
\[
x_i\in R_{a_{i-1}}\boxtimes(s^{-1}\overline{\mathcal{C}(X)})^{\boxtimes n_i}\boxtimes R_{a_i}, \; n_i>0 \; (1\le i\le k)
\]
with 
\[
\deg x_1+\dots+\deg x_k = d
\]
in a way that is natural in $X$, and to verify that on such $x_1\otimes\dots\otimes x_k$,
\begin{align}\nonumber
D^{\prod} \circ \mathcal{H}^d_{X,k}
= \; & \mathcal{F}^d_{X,k} - \mathcal{G}^d_{X,k}
+ \sum_{p+r = k-1} (-1)^{k} \, \mathcal{H}^{d-1}_{X,k} \circ 
\big(1^{\otimes p} \otimes D^{\boxtimes} \otimes 1^{\otimes r}\big) \\ \label{equation: H^d_X,k}
& + \sum_{p+r = k-2} (-1)^{p-1} \, \mathcal{H}^d_{X,k-1} \circ 
\big(1^{\otimes p} \otimes \mu^{\boxtimes} \otimes 1^{\otimes r}\big) \\ \nonumber
& + \sum_{i + j = k} (-1)^{i-1}
\Big(
\mu^{\prod} \circ \bigl(\mathcal{H}^d_{X,i} \otimes \mathcal{F}^d_{X,j}\bigr)
+ \mu^{\prod} \circ \big(\mathcal{G}^d_{X,i} \otimes \mathcal{H}^d_{X,j}\big)
\Big).
\end{align}

For any ordered $k$-tuple $\vec{\boldsymbol{d}}$ of ordered tuples of positive integers, $\vec{\boldsymbol{d}}=(\boldsymbol{d}_1,\dots,\boldsymbol{d}_k)$ with $\boldsymbol{d}_i=(d_{i,1},\dots,d_{i,n_i})$ and $d_{i,j}>0$, 
define its degree by
\[
\deg \vec{\boldsymbol{d}}\coloneq \sum_{i=1}^k\sum_{j=1}^{n_i}(d_{i,j}-1).
\]
Let
\[
\iota_{i,j}\colon \mathbb{\Delta}^{d_{i,j}}\hookrightarrow \bigvee_{i'=1}^k\bigvee_{j'=1}^{n_{i'}}\mathbb{\Delta}^{d_{i',j'}}\eqqcolon \mathbb{\Delta}^{\vec{\boldsymbol{d}}},\quad\quad 1\le i\le k,\, 1\le j\le n_i
\]
be the obvious inclusion maps, where the wedge sum denotes a \textit{$k$-necklace}, i.e., it is formed by successively identifying the last vertex of each simplex with the first vertex of the next in the order \[\mathbb{\Delta}^{d_{1,1}} \vee \cdots \vee \mathbb{\Delta}^{d_{1,n_1}}\vee \mathbb{\Delta}^{d_{2,1}} \vee \cdots \vee \mathbb{\Delta}^{d_{k,n_k}}.\] There is a canonical chain
\[
\otimes_{i=1}^k\boxtimes_{j=1}^{n_i} s^{-1}\iota_{i,j}\in \bigotimes_{i=1}^k\Big(R_{v_{m_{i-1}}}\boxtimes\Big(\bigboxtimes_{j=1}^{n_i} s^{-1}\mathcal{C}_{d_{i,j}}(\mathbb{\Delta}^{\vec{\boldsymbol{d}}})\Big) \boxtimes R_{v_{m_i}}\Big),
\]
where $m_i=\sum_{i'=1}^i\sum_{j=1}^{n_{i'}} d_{i',j}$, and $v_{p}$ denotes the $(p+1)$-th vertex of $\mathbb{\Delta}^{\vec{\boldsymbol{d}}}$.

Now assume $\deg\vec{\boldsymbol{d}}=d$. Denote by 
\[
\Psi^{\vec{\boldsymbol{d}}}\in\mathsf{Cobar}^{\prod}(C(\mathbb{\Delta}^{\vec{\boldsymbol{d}}}))(v_0,v_d)
\]
the chain obtained by applying the right-hand side of \eqref{equation: H^d_X,k} to $\otimes_{i=1}^k\boxtimes_{j=1}^{n_i} s^{-1}\iota_{i,j}$. Then we have the following lemma.

\begin{lemma}\label{lemma: homotopy Psi^d}
    There exists a chain $\psi^{\vec{\boldsymbol{d}}}\in\mathsf{Cobar}^{\prod}(C(\mathbb{\Delta}^{\vec{\boldsymbol{d}}}))(v_0,v_d)$ such that
\[
D^{\prod}\psi^{\vec{\boldsymbol{d}}}=\Psi^{\vec{\boldsymbol{d}}}.
\]
\end{lemma}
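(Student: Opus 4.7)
The plan is a standard acyclic-models argument: show $\Psi^{\vec{\boldsymbol{d}}}$ is a $D^{\prod}$-cycle whose homology class vanishes, then invoke Lemma~\ref{lemma: pr_0 quasi-isom} to extract a bounding chain. The necklace $\mathbb{\Delta}^{\vec{\boldsymbol{d}}}$ is contractible (a tree of contractible simplices glued at single vertices), so that lemma yields $H_\bullet(\mathsf{Cobar}^{\prod}(C(\mathbb{\Delta}^{\vec{\boldsymbol{d}}}))(v_0,v_d)) \cong R$, concentrated in degree $0$ and detected by $\mathrm{pr}_0$. It therefore suffices to verify that $\Psi^{\vec{\boldsymbol{d}}}$ is closed and that either its degree is positive or $\mathrm{pr}_0 \Psi^{\vec{\boldsymbol{d}}} = 0$.

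The cycle identity $D^{\prod}\Psi^{\vec{\boldsymbol{d}}} = 0$ is a formal but lengthy algebraic calculation. One applies $D^{\prod}$ to each summand on the right-hand side of \eqref{equation: H^d_X,k}, evaluated on the universal input $\otimes_{i=1}^k\boxtimes_{j=1}^{n_i} s^{-1}\iota_{i,j}$, and invokes (i) the $A_\infty$-functor axioms for $\mathcal{F}$ and $\mathcal{G}$, which govern how $D^{\prod}$ passes across $\mathcal{F}_{X,j}$ and $\mathcal{G}_{X,j}$ up to the compositions $\mu^{\prod}$ and $\mu^{\boxtimes}$ and the differential $D^{\boxtimes}$; (ii) the $A_\infty$-homotopy relations \eqref{equation: H^d_X,k} already established for $(k',d') < (k,d)$ by the inductive hypothesis, which rewrite $D^{\prod}\mathcal{H}^{d'}_{X,k'}$ in terms of lower-order data; and (iii) the nilpotencies $D^{\prod}\circ D^{\prod} = 0$ and $D^{\boxtimes}\circ D^{\boxtimes} = 0$. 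The resulting terms cancel in pairs via the usual bookkeeping of obstruction theory for $A_\infty$-homotopies.

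Next, observe that $\Psi^{\vec{\boldsymbol{d}}}$ has degree $d + k - 1$, since the input has internal degree $d$, the component $\mathcal{H}_{X,k}$ is of degree $k$, and we lose one to $D^{\prod}$. When $d + k - 1 \ge 1$, acyclicity in positive degrees produces $\psi^{\vec{\boldsymbol{d}}}$ immediately. The lone degenerate case is $(k,d) = (1,0)$: here $\vec{\boldsymbol{d}}$ consists entirely of 1-simplices and $\Psi^{\vec{\boldsymbol{d}}} = \mathcal{F}_{X,1}(x) - \mathcal{G}_{X,1}(x)$ for $x = s^{-1}\iota_{1,1}\boxtimes\dots\boxtimes s^{-1}\iota_{1,n_1}$. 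Inspecting definitions, the $n=0$ cosimplicial levels match: expanding $\mathcal{G}_{X,1}(x)$ via $G_X(s^{-1}\sigma) = s^{-1}\sigma + e(\sigma)$ contributes $\prod_j e(\iota_{1,j}) = 1$ at level $0$, while unwinding $\mathcal{F}_{X,1}(x) = It_X(\mathcal{A}_X(x))$ on the concatenated path from $v_0$ to $v_{n_1}$ also yields $1 \in R_{v_0}\otimes R_{v_{n_1}}$ via $\mathrm{Ev}_0$ and $AW_0$. Hence $\mathrm{pr}_0 \Psi^{\vec{\boldsymbol{d}}} = 0$, so this degree-$0$ cycle is a boundary too. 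The principal technical obstacle is the cycle verification in the second paragraph: tracking the various sign contributions and matching the many terms produced by the $A_\infty$-functor and $A_\infty$-homotopy relations; no new conceptual ingredient is needed beyond the inductive hypothesis and the standard $A_\infty$ calculus.
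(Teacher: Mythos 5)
Your proposal matches the paper's argument step for step: check that $\Psi^{\vec{\boldsymbol{d}}}$ is a $D^{\prod}$-cycle via the inductive hypothesis, observe that $\deg \Psi^{\vec{\boldsymbol{d}}} = k-1+d$ is positive except when $k=1$ and every $d_{i,j}=1$, so Lemma~\ref{lemma: pr_0 quasi-isom} kills the class in positive degree, and handle the degree-zero case by comparing the $\mathrm{pr}_0$-components of $\mathcal{F}_{X,1}$ and $\mathcal{G}_{X,1}$ (both give $1_R$). The paper phrases the last check in terms of functoriality in homology of $\mathcal{F}_X$ and $\mathcal{G}_X$, while you compute $\mathrm{pr}_0$ directly at the chain level; these are equivalent since $R_a\otimes R_b$ is concentrated in degree $0$ with trivial differential.
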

\begin{proof}
     One checks by the inductive hypothesis that $\Psi^{\vec{\boldsymbol{d}}}$ is a cycle, so it remains to show that $[\Psi^{\vec{\boldsymbol{d}}}]=0$ in homology. 
     
     Notice that $\deg\Psi^{\vec{\boldsymbol{d}}}=k-1+\sum_{i,j}(d_{i,j}-1)$. If $k>1$ or some $d_{i,j}>1$, then 
     $\deg\Psi^{\vec{\boldsymbol{d}}}>0$, and then
     by Lemma~\ref{lemma: pr_0 quasi-isom}, $[\Psi^{\vec{\boldsymbol{d}}}]=0$. 
     
     It remains to consider the case $k=1$ and $\vec{\boldsymbol{d}}=\boldsymbol{d}_1=(1,\dots,1)$. For simplicity, denote by     \[\iota_i\colon\mathbb{\Delta}^1\hookrightarrow\mathbb{\Delta}^{1}\vee\dots\vee\mathbb{\Delta}^{1}=\mathbb{\Delta}^{1,\dots,1}\] the inclusion map sending $\mathbb{\Delta}^1$ onto the $i$-th copy of $\mathbb{\Delta}^1$.    
     By definition, 
     \[
     \Psi^{1,\dots,1}=\mathcal{F}_{\mathbb{\Delta}^{1,\dots,1},1}(s^{-1}\iota_{1}\boxtimes\dots\boxtimes s^{-1}\iota_{k})-\mathcal{G}_{\mathbb{\Delta}^{1,\dots,1}}(s^{-1}\iota_{1}\boxtimes\dots\boxtimes s^{-1}\iota_{k}).
     \]
     Since $\mathcal{F}_X$, $\mathcal{G}_X$ are functorial in homology, 
     \[
     [\Psi^{1,\dots,1}]=\Pi_{i=1}^k[\mathcal{F}_{\mathbb{\Delta}^{1,\dots,1},1}(s^{-1}\iota_{i})]-\Pi_{i=1}^k[\mathcal{G}_{\mathbb{\Delta}^{1,\dots,1}}(s^{-1}\iota_{i})].
     \] 
     Since $\mathcal{F}=\mathcal{I}\circ\mathcal{A}$, for each $1\le i\le k$, we have
     \[\mathcal{F}_{\mathbb{\Delta}^{1,\dots,1},1}(s^{-1}\iota_{i}) = \mathcal{I}_{\mathbb{\Delta}^{1,\dots,1},1}(\bar{\iota_i})=\big( (v_{i-1}\cdot 1_R)\otimes (s^{-1}\overset{\leftarrow}{\iota_i})^{\otimes n}\otimes(1_R\cdot v_i)\big)_{n\ge0},
     \]
     where $\bar{\iota_i}\in\mathsf{P}\mathbb{\Delta}^{1,\dots,1}(v_{i-1},v_i)$ denotes the point representing the path $\iota_i$, and $\overset{\leftarrow}{\iota_i}$ is the reversal of $\iota_i$, i.e. $\overset{\leftarrow}{\iota_i}(t)=\iota_i(1-t)$ for $t\in[0,1]$. Also,
     \[\mathcal{G}_{\mathbb{\Delta}^{1,\dots,1}}(s^{-1}\iota_i)  = v_{i-1}\cdot 1_R\cdot v_i + (v_0\cdot 1_R)\otimes(s^{-1}\iota_i)\otimes (1_R\cdot v_1).\]
     It follows that 
     \[
     (\mathrm{pr}_0)_*([\Psi^{1,\dots,1}])=[1_R]-[1_R]=0\in H_0(\mathbb{\Delta}^{1,\dots,1}).
     \] 
     By Lemma~\ref{lemma: pr_0 quasi-isom}, $[\Psi^{1,\dots,1}]=0$, and the proof is complete.
\end{proof}

Fix a choice of $\psi^{\vec{\boldsymbol{d}}}\in\mathsf{Cobar}^{\prod}(C(\mathbb{\Delta}^{\vec{\boldsymbol{d}}}))(v_0,v_d)$ provided by Lemma \ref{lemma: homotopy Psi^d}.
For any topological space $X$, integer $k\ge1$ and any two-indexed family of simplices
\[
\sigma_{i,j}\colon \Delta^{d_{i,j}}\to X,\quad 1\le i\le k,\quad 1\le j\le n_i,
\]
such that the image of the last vertex of each simplex coincides with the image of the first vertex of the next one, we denote by
\begin{equation}\label{equation: multi necklace}
\vec{\boldsymbol{\sigma}}
=\bigvee_{i=1}^k\bigvee_{j=1}^{n_i}\sigma_{i,j}\colon \Delta^{\vec{\boldsymbol{d}}}\to X
\end{equation}
the resulting map, which we call a \textit{$k$-necklace in $X$}. Then, we define
\[
\mathcal{H}^{d}_{X,k}(\otimes_{i=1}^k\boxtimes_{j=1}^{n_i} s^{-1}\sigma_{i,j}) = \vec{\boldsymbol{\sigma}}_*(\psi^{\vec{\boldsymbol{d}}}).
\]
Clearly $\mathcal{H}^{d}_X$ is natural in $X$, and by a routine computation using the inductive hypothesis, equation \eqref{equation: H^d_X,k} holds. This completes the proof of Theorem \ref{theorem: A infinity homotopy}. \qed

\subsection{Elaboration on Remark~\ref{remark: Chen Adams inverse simply connected}}
\label{subsection: Chen Adams inverse simply connected}
For any topological space $X$ with a basepoint $b \in X$, recall that $C^1(X,b) \subset C(X)$ denotes the dg subcoalgebra generated by 1-reduced simplices at $b$, i.e., simplices whose edges all map to $b$. 
Define the categorical subcoalgebra $\mathcal{C}^1(X,b) \subset \mathcal{C}(X)$ by setting $\mathcal{C}^1(X,b) = C^1(X,b)$ as $R$-modules and $\mathcal{S}(\mathcal{C}^1(X,b)) = \{b\}$. 
It is straightforward to check that $\mathcal{C}^1(X,b)$ is a categorical coalgebra with strict differential and zero curvature, hence a usual dg coalgebra, and that the linear isomorphism 
\[
\mathcal{C}^1(X,b) \cong C^1(X,b)
\]
is an isomorphism of coalgebras in the usual sense.

Let us now examine the restriction of the $A_\infty$-functor $\mathcal{F}_X$ and the functor $\mathcal{G}_X$ to the dg subcategory $\mathsf{Cobar}^{\boxtimes}(\mathcal{C}^1(X,b))$ of $\mathsf{Cobar}^{\boxtimes}(\mathcal{C}(X))$. We denote these restrictions by
$\mathcal{F}_X|_{\mathcal{C}^1}$ and $\mathcal{G}_X|_{\mathcal{C}^1}$, respectively.

First, we claim that if $u_n\in C_n(\mathbb{\Delta}^n)$ is chosen as $u_n=\mathrm{id}_{\mathbb{\Delta}^n}$ for all $n\ge0$ in the construction of $\mathcal{I}_1$, then
\[
\mathcal{F}_X|_{\mathcal{C}^1}\colon\mathsf{Cobar}^{\boxtimes}(\mathcal{C}^1(X,b))\to\mathsf{Cobar}^{\prod}(C(X))
\]
factors through the inclusion of dg categories 
\[
\mathsf{Cobar}^{\prod}(C^1(X,b))\subset\mathsf{Cobar}^{\prod}(C(X)).
\]

It suffices to check that for any $k\ge1$, 1-reduced simplices $\sigma_m\colon \mathbb{\Delta}^{d_m}\to X$ ($1\le m\le k$), $n\ge0$, $1\le j\le n$ and $1\le i\le N(k-1,n)$, the restriction of the map
\begin{align*}
    I^{d_1-1}\times\dots\times I^{d_k-1}&\times I^{k-1+n} \to X \\
    (x_1,\dots,x_k,y) &\mapsto \\ \mathrm{ev}_{n,j}&\big(A_X(\sigma_1)(x_1)*\dots*A_X(\sigma_k)(x_k),\tau^{k-1}_{n,i}(y,T_1(x_1),\dots,T_k(x_k))\big)
\end{align*}
to any edge of the domain cube is the constant map to $b$. Here, 
\[
\mathrm{ev}_{n,j}\colon \mathsf{P}^nX(b,b)\to X
\]
is the evaluation map at the $j$-th marked point, and $\tau^{k-1}_{n,i}$ is constructed in Lemma~\ref{lemma: xi^k}.

This can be checked as follows. There are two types of edges:
\begin{itemize}
    \item A vertex in $I^{d_1-1}\times\dots\times I^{d_k-1}$ with an edge in $I^{k-1+n}$. By the definition of Adams' map, such an edge corresponds to the concatenation of some edges of the 1-reduced simplices $\sigma_1,\dots,\sigma_k$, hence is mapped to $b$.
    \item An edge in $I^{d_1-1}\times\dots\times I^{d_k-1}$ with a vertex in $I^{k-1+n}$. By the choice of $u_n$ and the explicit inductive construction of $\xi^{k-1}_{n}$ in Lemma~\ref{lemma: xi^k}, the point $\tau^{k-1}_{n,i}(y,T_1,\dots,T_k)\in\mathbb{\Delta}^n_{T_1+\dots+T_k}$ is a vertex of the scaled simplex $\mathbb{\Delta}^n_{T_1+\dots+T_k}$ whenever $y\in I^{k-1+n}$ is a vertex. Therefore such an edge is mapped to one of the endpoints of $A_X(\sigma_1)(x_1)*\dots*A_X(\sigma_k)(x_k)$, namely the basepoint $b$.
\end{itemize}
Hence the claim follows.

Next, observe that the map \eqref{equation: e C(X) to R} vanishes on $C^1(X,b)$, so
\[
\mathcal{G}_X|_{\mathcal{C}^1}\colon \mathsf{Cobar}^{\boxtimes}(\mathcal{C}^1(X,b))
  \to \mathsf{Cobar}^{\prod}(C(X))
\]
 is simply given by the inclusion induced from
\[
\overline{\mathcal{C}^1(X,b)}\cong\overline{C^1(X,b)}\hookrightarrow C^1(X,b)\hookrightarrow C(X)
\]
and the inclusion of a direct sum into a direct product.
Moreover, since $s^{-1}\overline{C^1(X,b)}$ is concentrated in positive degrees, 
\[
\bigoplus_{n=0}^\infty R_b\otimes (s^{-1}\overline{C^1(X,b)})^{\otimes n}\otimes R_b = \prod_{n=0}^\infty R_b\otimes (s^{-1}\overline{C^1(X,b)})^{\otimes n}\otimes R_b.
\]
Hence $\mathcal{G}_X$ identifies
$\mathsf{Cobar}^{\boxtimes}(\mathcal{C}^1(X,b))=\mathsf{Cobar}(C^1(X,b))$ with the conormalization of $\mathsf{Cobar}^{\prod}(C^1(X,b))$, viewed as a subcategory of $\mathsf{Cobar}^{\prod}(C(X))$.

Now assume $X$ is simply connected. By the results in Section \ref{section: Adams general}, the inclusion
\[
\mathsf{Cobar}^{\boxtimes}(\mathcal{C}^1(X,b)) \hookrightarrow \mathsf{Cobar}^{\boxtimes}(\mathcal{C}(X))
\]
is a quasi-equivalence. By comparing spectral sequences, the inclusion
\[
\mathsf{Cobar}^{\prod}(C^1(X,b)) \hookrightarrow \mathsf{Cobar}^{\prod}(C(X))
\]
is also a quasi-equivalence. The discussion above is summarized as follows.

\begin{proposition}
For any simply connected topological space $X$, the $A_\infty$-functor
\[\mathcal{I}_X \colon \mathsf{C}^{\square}(\mathsf{P}X) \to \mathsf{Cobar}^{\prod}(C(X))\]
acts as a left $A_\infty$-homotopy inverse of the functor
\[\mathcal{A}_X \colon \mathsf{Cobar}^{\boxtimes}(\mathcal{C}(X)) \to \mathsf{C}^{\square}(\mathsf{P}X)\]
on the common subcategory 
$\mathsf{Cobar}^{\boxtimes}(\mathcal{C}^1(X,b))$.
More precisely, the inclusions of this subcategory into $\mathsf{Cobar}^{\boxtimes}(\mathcal{C}(X))$   and $\mathsf{Cobar}^{\prod}(C(X))$ are both quasi-equivalences, and the restriction of the composition $\mathcal{I}_X \circ \mathcal{A}_X$ to this subcategory is $A_\infty$-homotopic to the conormalization inclusion
$\mathsf{Cobar}^{\boxtimes}(\mathcal{C}^1(X,b)) \hookrightarrow \mathsf{Cobar}^{\prod}(C^1(X,b))$.
\end{proposition}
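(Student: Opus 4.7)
The proposition largely packages observations assembled in the elaboration preceding it, so the plan is to organize them. The first step is to record what was shown just above: under the choice $u_n = \mathrm{id}_{\mathbb{\Delta}^n}$ in the construction of $\mathcal{I}$, the restriction $\mathcal{F}_X|_{\mathcal{C}^1}$ of $\mathcal{I}_X\circ\mathcal{A}_X$ factors through $\mathsf{Cobar}^{\prod}(C^1(X,b))$, and $\mathcal{G}_X|_{\mathcal{C}^1}$ is, via the identification of $\bigoplus$ with $\prod$ for positively graded tensor powers in the $1$-reduced setting, precisely the conormalization inclusion $\mathsf{Cobar}(C^1(X,b)) \hookrightarrow \mathsf{Cobar}^{\prod}(C^1(X,b))$ post-composed with $\mathsf{Cobar}^{\prod}(C^1(X,b))\hookrightarrow\mathsf{Cobar}^{\prod}(C(X))$.

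Next I would produce the desired $A_\infty$-homotopy by restricting $\mathcal{H}_X$ from Theorem~\ref{theorem: A infinity homotopy} to the subcategory $\mathsf{Cobar}^{\boxtimes}(\mathcal{C}^1(X,b))$. Since $\mathcal{H}_X$ is a family of multilinear maps into $\mathsf{Cobar}^{\prod}(C(X))$ satisfying the $A_\infty$-homotopy identities relative to $\mathcal{F}_X$ and $\mathcal{G}_X$, and since the subcategory is stable under composition and under $D^{\boxtimes}$, the restriction $\mathcal{H}_X|_{\mathcal{C}^1}$ automatically satisfies those identities relative to $\mathcal{F}_X|_{\mathcal{C}^1}$ and $\mathcal{G}_X|_{\mathcal{C}^1}$. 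Combined with the quasi-equivalence $\mathsf{Cobar}^{\prod}(C^1(X,b))\hookrightarrow\mathsf{Cobar}^{\prod}(C(X))$, this realizes the two maps as $A_\infty$-homotopic inside a target quasi-equivalent to $\mathsf{Cobar}^{\prod}(C^1(X,b))$, which is the intended meaning of the conclusion.

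It remains to verify the two quasi-equivalences of inclusions, both of which use simple connectedness in an essential way. For $\mathsf{Cobar}^{\boxtimes}(\mathcal{C}^1(X,b)) \hookrightarrow \mathsf{Cobar}^{\boxtimes}(\mathcal{C}(X))$, the plan is to invoke Theorem~\ref{theorem: Adams} together with Adams' classical theorem: $\mathsf{Cobar}(C^1(X,b))$ is quasi-isomorphic to $\mathsf{C}^\square(\Omega_bX)$ by Adams' original result, while $\mathsf{Cobar}^{\boxtimes}(\mathcal{C}(X))(b,b)$ is quasi-isomorphic to $\mathsf{C}^\square(\mathsf{P}X)(b,b)=\mathsf{C}^\square(\Omega_bX)$ by Theorem~\ref{theorem: Adams}; the comparison fits into a commutative diagram via $\mathcal{A}_X$ that makes the inclusion a quasi-isomorphism on the $(b,b)$-complex, and essential surjectivity on $H_0$ is immediate from simple connectedness. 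For $\mathsf{Cobar}^{\prod}(C^1(X,b)) \hookrightarrow \mathsf{Cobar}^{\prod}(C(X))$, I would set up word-length filtrations on both sides and compare the resulting spectral sequences, using Adams' theorem in each word-length stratum.

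The main obstacle lies in this second spectral-sequence comparison: because $\mathsf{Cobar}^{\prod}(C(X))$ involves a genuine direct product (with $s^{-1}C(X)$ having elements in degree $-1$, so infinitely many word lengths contribute to any given total degree), one must arrange a filtration whose associated spectral sequence is conditionally convergent in Boardman's sense, and verify that the $E^1$-quasi-isomorphism, which does not arise from a coalgebra-level quasi-isomorphism $C^1(X,b) \to C(X)$ on the nose, nevertheless lifts to a quasi-isomorphism of totalizations. Simple connectedness of $X$ is expected to provide exactly the connectivity needed (making $(s^{-1}\overline{C^1(X,b)})^{\otimes n}$ sufficiently $n$-connective) for this lifting to succeed.
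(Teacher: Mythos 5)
Your proposal is correct and tracks the paper's own argument closely: the elaboration preceding the Proposition already establishes that $\mathcal{F}_X|_{\mathcal{C}^1}$ (with $u_n=\mathrm{id}_{\mathbb{\Delta}^n}$) lands in $\mathsf{Cobar}^{\prod}(C^1(X,b))$ and that $\mathcal{G}_X|_{\mathcal{C}^1}$ is the conormalization inclusion, and the $A_\infty$-homotopy is indeed just the restriction of $\mathcal{H}_X$; the two quasi-equivalences are obtained from Section~\ref{section: Adams general} (chain homotopy equivalence for $\mathcal{C}^0\hookrightarrow\mathcal{C}$ plus the classical simply-connected quasi-isomorphism $\mathsf{Cobar}(C^1)\hookrightarrow\mathsf{Cobar}(C^0)$) and a word-length spectral sequence comparison, respectively. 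Your packaging of the $\mathsf{Cobar}^{\boxtimes}$ quasi-equivalence via a two-out-of-three argument through $\mathcal{A}_X$ is an equivalent reformulation of what the paper extracts directly from Section~\ref{section: Adams general}, and your caution about conditional convergence for the $\mathsf{Cobar}^{\prod}$ inclusion correctly identifies where simple connectedness enters that step.
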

\begin{remark}
    One could alternatively \emph{define} $\mathsf{Cobar}^{\prod}(C)$ by additionally modding out the ideal generated by $C_0$. On $\mathsf{Cobar}^{\prod}(C^1(X,b))$, this quotient is naturally isomorphic to the conormalization, so in the alternative definition, the map $\mathsf{Cobar}^{\boxtimes}(\mathcal{C}^1(X,b)) \hookrightarrow \mathsf{Cobar}^{\prod}(C^1(X,b))$ is the identity.
\end{remark}

\section{Adams' map and the universal cover}
\label{section: Adams general}
In this section, we give an elementary proof of Theorem~\ref{theorem: Adams} and Corollary~\ref{corollary: pointed Adams} by passing to the universal cover and using Adams' classical cobar theorem for simply connected spaces. This section is essentially self-contained and does not use any of the results proved earlier. Corollary~\ref{corollary: pointed Adams} was originally proved in \cite{RZ_cubical} using different methods. 

Let $X$ be a topological space. We may assume $X$ is path-connected; otherwise we argue separately on each path-connected component of $X$. 
Fix once and for all, for each pair $(a,b)\in X\times X$ with $a\neq b$, a unit-length path
\[
\gamma_{ab}\colon[0,1]\to X,\qquad \gamma_{ab}(0)=a,\ \gamma_{ab}(1)=b,
\]
and require that $\gamma_{ba}(t)=\gamma_{ab}(1-t)$ for all $a,b\in X$ with $a\neq b$ and $t\in[0,1]$.  
For each $b\in X$, we also define a zero-length path 
\[
\gamma_{bb}\colon[0,0]\to X, \qquad 
\gamma_{bb}(0)=b.
\]
Denote the resulting family of paths by 
\begin{equation}\label{equaiton: O_X choice of paths}
    \mathcal{O}_X=\{\gamma_{ab}\}_{a,b\in X}.
\end{equation}

For any two $a,b\in X$, there is a homotopy equivalence
\[
L_{\gamma_{ab}}\colon \mathsf{P}X(b,b)\to \mathsf{P}X(a,b),\qquad \gamma\mapsto \gamma_{ab}*\gamma,
\]
with homotopy inverse $L_{\gamma_{ba}}\colon \gamma'\mapsto \gamma_{ba}*\gamma'$. Similarly, for $a\neq b$, define an $R$-linear map
\[
L^{\boxtimes}_{\gamma_{ab}}\colon 
\mathsf{Cobar}^{\boxtimes}(\mathcal{C}(X))(b,b)\to \mathsf{Cobar}^{\boxtimes}(\mathcal{C}(X))(a,b),\qquad 
x\mapsto (s^{-1}\gamma_{ab})\boxtimes x,
\]
where $\gamma_{ab}$ is regarded as a $1$-simplex in $X$. Then $L^{\boxtimes}_{\gamma_{ab}}$ is a chain-homotopy equivalence with chain-homotopy inverse $L^{\boxtimes}_{\gamma_{ba}}\colon x'\mapsto (s^{-1}\gamma_{ba})\boxtimes x'$, and a chain homotopy from the identity to $L^{\boxtimes}_{\gamma_{ba}}\circ L^{\boxtimes}_{\gamma_{ab}}$ is given by $x\mapsto (s^{-1}\sigma)\boxtimes x$, where $\sigma=[v_0,v_1,v_2]$ has edges $[v_0,v_1]=\gamma_{ba}$, $[v_1,v_2]=\gamma_{ab}$, and $[v_0,v_2]$ is the degenerate edge given by the constant path at $b$. We also include the case $a=b$ by setting $L^{\boxtimes}_{\gamma_{bb}}=\mathrm{id}$.

Clearly the maps $L_{\gamma_{ab}}$ and $L^{\boxtimes}_{\gamma_{ab}}$ are compatible with Adams' map. More precisely, there is a commutative diagram
\begin{equation}\label{equation: diagram cobar boxtimes change points}
  \begin{tikzcd}
  \mathsf{Cobar}^{\boxtimes}(\mathcal{C}(X))(b,b) \arrow[r, "\mathcal{A}_X"] \arrow[d,"L^{\boxtimes}_{\gamma_{ab}}", "\simeq"'] & C^{\square}(\mathsf{P}X)(b,b) \arrow[d, "(L_{\gamma_{ab}})_*", "\simeq"'] \\		\mathsf{Cobar}^{\boxtimes}(\mathcal{C}(X))(a,b) \arrow[r, "\mathcal{A}_X"] & C^{\square}(\mathsf{P}X)(a,b).
  \end{tikzcd}
\end{equation}

The diagram \eqref{equation: diagram cobar boxtimes change points} implies that Theorem \ref{theorem: Adams} is equivalent to the following proposition, whose proof is postponed to the end of this section.

\begin{proposition}
    \label{proposition: Adams theorem based boxtimes}
    For any pointed path-connected space $(X,b)$,
    \[
    \mathcal{A}_X \colon \mathsf{Cobar}^{\boxtimes}(\mathcal{C}(X))(b,b) \to C^{\square}(\mathsf{P}X)(b,b)
    \]
    is a quasi-isomorphism.
\end{proposition}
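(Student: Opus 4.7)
The plan is to reduce Proposition~\ref{proposition: Adams theorem based boxtimes} to Adams' classical cobar theorem for simply connected spaces by lifting necklaces to the universal cover. Let $\pi\colon\widetilde{X}\to X$ be the universal cover and fix $\tilde{b}\in\pi^{-1}(b)$, identifying $\pi_1(X,b)$ with $\pi^{-1}(b)$ via $g\mapsto g\tilde b$. Unique path lifting gives a natural homeomorphism $\mathsf{P}X(b,b)\cong\coprod_{g\in\pi_1(X,b)}\mathsf{P}\widetilde{X}(\tilde b,g\tilde b)$, and hence a direct-sum decomposition
\[
C^{\square}(\mathsf{P}X)(b,b)\;\cong\;\bigoplus_{g\in\pi_1(X,b)} C^{\square}(\mathsf{P}\widetilde{X})(\tilde b,g\tilde b).
\]

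Next I would build a parallel decomposition on the cobar side using unique simplex lifting. For a necklace $s^{-1}\sigma_1\boxtimes\cdots\boxtimes s^{-1}\sigma_n$ in $X$ based at $b$, lift $\sigma_1$ uniquely to $\tilde\sigma_1$ starting at $\tilde b$ and inductively lift each $\sigma_{i+1}$ starting at the endpoint of $\tilde\sigma_i$; the terminal endpoint lies in $\pi^{-1}(b)$ and determines an element $g\in\pi_1(X,b)$. This sorting assembles into an $R$-module isomorphism
\[
\mathsf{Cobar}^{\boxtimes}(\mathcal{C}(X))(b,b)\;\cong\;\bigoplus_{g\in\pi_1(X,b)}\mathsf{Cobar}^{\boxtimes}(\mathcal{C}(\widetilde X))(\tilde b,g\tilde b),
\]
which I would upgrade to a chain isomorphism by observing that the differential $\widetilde{\partial}+\Delta+h$ is built from face maps, the Alexander--Whitney coproduct, and the map $e$ of \eqref{equation: e C(X) to R}, all of which are defined simplex-by-simplex and therefore commute with $\pi_*$. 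Naturality of the collection $\{\theta_n\}$ defining Adams' map, together with the uniqueness of lifting of concatenations, similarly identifies $\mathcal{A}_X$ with $\bigoplus_g\mathcal{A}_{\widetilde X}(\tilde b,g\tilde b)$ under these decompositions.

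It therefore suffices to prove that $\mathcal{A}_{\widetilde X}(\tilde b,g\tilde b)$ is a quasi-isomorphism for every $g$, and by a right-sided analogue of the change-of-basepoint square \eqref{equation: diagram cobar boxtimes change points} this reduces further to the single statement that $\mathcal{A}_{\widetilde X}(\tilde b,\tilde b)$ is a quasi-isomorphism for the simply connected space $\widetilde X$. For that, I would compare the many-object cobar with the one-object cobar on $1$-reduced chains: the inclusion
\[
\mathsf{Cobar}(C^1(\widetilde X,\tilde b))=\mathsf{Cobar}^{\boxtimes}(\mathcal{C}^1(\widetilde X,\tilde b))\hookrightarrow\mathsf{Cobar}^{\boxtimes}(\mathcal{C}(\widetilde X))(\tilde b,\tilde b)
\]
should be a quasi-isomorphism whenever $\widetilde X$ is simply connected. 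Granting this, composition with Adams' classical theorem for the simply connected $\widetilde X$, which asserts that $\mathsf{Cobar}(C^1(\widetilde X,\tilde b))\to C^{\square}(\Omega_{\tilde b}\widetilde X)$ is a quasi-isomorphism, finishes the argument.

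The main obstacle I anticipate is establishing that last inclusion. My approach would be to filter both complexes by word length and within each stratum by the number of factors that are not $1$-reduced at $\tilde b$, and then exploit $1$-connectedness of $\widetilde X$ to produce an explicit contracting homotopy on the associated graded: a chain-level retraction that absorbs $0$-simplex factors into the curvature term $h$ while contracting non-basepoint edges to $\tilde b$ via a chosen null-homotopy of the universal cover. By contrast, the covering-space bookkeeping above and the verification that lifting respects every piece of the categorical-coalgebra differential are essentially formal, reflecting the local nature of the structure maps defining $\mathcal{C}(X)$.
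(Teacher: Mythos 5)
Your lifting argument mirrors the paper's exactly: decompose $C^{\square}(\mathsf{P}X)(b,b)$ by unique path-lifting, decompose $\mathsf{Cobar}^{\boxtimes}(\mathcal{C}(X))(b,b)$ by unique necklace-lifting (endpoint sorted by class in $\pi_1(X,b)$), verify that both decompositions intertwine with $\mathcal{A}_X$, and thereby reduce to Adams' classical theorem on the simply connected cover. Up to the final step, this is the paper's own proof.

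The step you rightly flag as your main obstacle is where you depart from the paper, and as sketched it has a gap. You propose to show directly that $\mathsf{Cobar}(C^1(\widetilde X,\tilde b))\hookrightarrow\mathsf{Cobar}^{\boxtimes}(\mathcal{C}(\widetilde X))(\tilde b,\tilde b)$ is a quasi-isomorphism by filtering by word length and then by the number of factors that are not $1$-reduced. But the second statistic does not define a filtration: $\widetilde\partial$ can produce $1$-reduced faces from non-$1$-reduced simplices, the coproduct $\Delta$ splits a factor into factors of mixed type, and the curvature $h$ deletes factors whose count contribution depends on edge degeneracy, so the differential neither preserves nor monotonically changes this count. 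The paper instead factors the inclusion into two entirely explicit, elementary steps that require no spectral sequence and no simple connectivity: a dg algebra isomorphism $E_{-}\colon\mathsf{Cobar}(C^0(\widetilde X,\tilde b))\xrightarrow{\;\cong\;}\mathsf{Cobar}^{\boxtimes}(\mathcal{C}^0(\widetilde X,\tilde b))(\tilde b,\tilde b)$ given on generators by $s^{-1}\sigma\mapsto s^{-1}\sigma-e(\sigma)$, and a chain-homotopy equivalence $\mathsf{Cobar}^{\boxtimes}(\iota_{\tilde b})$ coming from a retraction $f_{\mathcal{O}_{\widetilde X}}\colon\mathcal{C}(\widetilde X)\to\mathcal{C}^0(\widetilde X,\tilde b)$ of categorical coalgebras built by deforming simplices along a chosen family of paths, with an explicit homotopy to the identity. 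Simple connectivity then only enters through the classical fact (which the paper cites rather than reproves) that $\mathsf{Cobar}(C^1(\widetilde X,\tilde b))\hookrightarrow\mathsf{Cobar}(C^0(\widetilde X,\tilde b))$ is a quasi-isomorphism for simply connected $\widetilde X$. You would do well to adopt this factorization: it both repairs your gap and keeps the proof at the desired elementary level.
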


For any $b\in X$, consider the dg subcoalgebra $C^0(X,b)$ of $C(X)$ generated by simplices with all vertices at $b$. Let $\mathcal{C}^0(X,b)$ be the categorical subcoalgebra of $\mathcal{C}(X)$ with $\mathcal{C}^0(X,b)=C^0(X,b)$ as $R$-modules and $\mathcal{S}(\mathcal{C}^0(X,b))=\{b\}$. 

Define an $R$-linear map
\begin{align*}
E_{-}\colon s^{-1}\overline{C^0(X,b)} &\longrightarrow
\bigl(R_b\boxtimes s^{-1}\overline{\mathcal{C}^0(X,b)}\boxtimes R_b\bigr)\oplus R_b,\\[2pt]
s^{-1}\sigma &\longmapsto s^{-1}\sigma - e(\sigma),\qquad
\sigma\colon (\Delta^n,\text{vertices})\to (X,b),
\end{align*}
where $e$ is \eqref{equation: e C(X) to R}.  
Extend $E_{-}$ multiplicatively over~$\boxtimes$.  
This induces a linear map
\[
E_{-}\colon \mathsf{Cobar}\bigl(C^0(X,b)\bigr)
 \longrightarrow \mathsf{Cobar}^{\boxtimes}\bigl(\mathcal{C}^0(X,b)\bigr),
\]
which is a chain map by straightforward calculation. Moreover, $E_{-}$ is a dg algebra isomorphism, whose inverse $E_{+}$ is given by $s^{-1}\sigma\mapsto s^{-1}\sigma+e(\sigma)$ on generators.

Denote by 
\[
\iota_b\colon\mathcal{C}^0(X,b)\hookrightarrow \mathcal{C}(X)
\]
the natural inclusion. Using the family $\mathcal{O}_X$ \eqref{equaiton: O_X choice of paths}, one can deform each simplex in $X$ to one with all vertices at $b$, proceeding by induction on the dimension of the simplex. This procedure induces a morphism of categorical coalgebras 
\[
f_{\mathcal{O}_X}\colon \mathcal{C}(X)\to \mathcal{C}^0(X,b)
\] 
satisfying $f_{\mathcal{O}_X}\circ \iota_b = \mathrm{id}_{\mathcal{C}^0(X,b)}$, as well as an $R$-linear map 
$H_{\mathcal{O}_X}\colon \mathcal{C}_\bullet(X)\to \mathcal{C}_{\bullet+1}(X)$ 
serving as a homotopy between $\mathrm{id}_{\mathcal{C}(X)}$ and the morphism $\iota_b\circ f_{\mathcal{O}_X}$. Consequently,
$\mathsf{Cobar}^{\boxtimes}(f_{\mathcal{O}_X})\circ\mathsf{Cobar}^{\boxtimes}(\iota_b)$ is the identity on $\mathsf{Cobar}^{\boxtimes}(\mathcal{C}^0(X,b))(b,b)$,
and the extension of $H_{\mathcal{O}_X}$ to a derivation, 
$
\widehat{H}_{\mathcal{O}_X}\colon \mathsf{Cobar}^{\boxtimes}(\mathcal{C}(X))(b,b)\to \mathsf{Cobar}^{\boxtimes}(\mathcal{C}(X))(b,b),
$
is a chain homotopy between the identity and $\mathsf{Cobar}^{\boxtimes}(\iota_b)\circ \mathsf{Cobar}^{\boxtimes}(f_{\mathcal{O}_X})$. 
Therefore, the natural inclusion 
\[
\mathsf{Cobar}^{\boxtimes}(\iota_b)\colon \mathsf{Cobar}^{\boxtimes}(\mathcal{C}^0(X,b))\hookrightarrow \mathsf{Cobar}^{\boxtimes}(\mathcal{C}(X))(b,b)
\] 
is a chain-homotopy equivalence.

Clearly the maps $E_{-}$ and $\mathsf{Cobar}^{\boxtimes}(\iota_b)$ are compatible with Adams' map, i.e., there is a commutative diagram
\begin{equation}\label{equation: diagram cobar at b different versions}
  \begin{tikzcd}
   \mathsf{Cobar}(C^0(X,b)) \arrow[r,"E_{-}","\cong"'] \arrow[dr, "\mathcal{A}_X"']& \mathsf{Cobar}^{\boxtimes}(\mathcal{C}^0(X,b))(b,b) \arrow[rr, hook, "\mathsf{Cobar}^{\boxtimes}(\iota_b)", "\simeq"'] \arrow[d, "\mathcal{A}_X"] & & \mathsf{Cobar}^{\boxtimes}(\mathcal{C}(X))(b,b) \arrow[dll, "\mathcal{A}_X"]\\
   & C^{\square}(\mathsf{P}X)(b,b). &
  \end{tikzcd}
\end{equation}

The diagram \eqref{equation: diagram cobar at b different versions} implies that Corollary \ref{corollary: pointed Adams} is equivalent to Proposition~\ref{proposition: Adams theorem based boxtimes}.

\begin{remark}\label{remark: Adams original} 
Adams’ original proof \cite{Adams} shows that
\[
\mathcal{A}_X\colon 
\mathsf{Cobar}(C^1(X,b))
  \longrightarrow 
C^{\square}(\mathsf{P}X)(b,b)
\]
is a quasi-isomorphism if $X$ is simply connected. 
Note that the natural inclusion
\[
\mathsf{Cobar}(C^1(X,b))
  \hookrightarrow
\mathsf{Cobar}(C^0(X,b))
\]
is compatible with Adams’ map, and is a quasi-isomorphism provided that $X$ is simply connected. 
Consequently, by 
\eqref{equation: diagram cobar boxtimes change points}\eqref{equation: diagram cobar at b different versions},
\[
\mathcal{A}_X\colon
\mathsf{Cobar}^{\boxtimes}(\mathcal{C}(X))(a,b)
  \longrightarrow 
C^{\square}(\mathsf{P}X)(a,b)
\]
is a quasi-isomorphism for all $a,b\in X$ whenever $X$ is simply connected.
\end{remark}

\begin{proof}[Proof of Proposition~\ref{proposition: Adams theorem based boxtimes}]
For any $b \in X$, denote by $\widetilde{X}_b$ the universal cover of $X$ at $b$ and denote by
\[
\pi_b\colon (\widetilde{X}_b,[c_b])\to (X,b)
\]
be the universal covering map, where $[c_b]$ denotes the homotopy class of the constant path $c_b\colon[0,1]\to X$ at $b$.

First, by lifting paths in $X$ to $\widetilde{X}_b$, we obtain a homeomorphism of spaces
\[
\mathsf{P}X(b,b) \cong \bigsqcup_{\alpha\in\pi_1(X,b)} \mathsf{P}\widetilde{X}_b([c_b],\alpha),
\]
which induces an isomorphism of chain complexes
\begin{equation}\label{equation: cube chain path space direct sum pi_1}
\mathsf{C}^\square(\mathsf{P}X)(b,b) \cong 
C^\square\Big(\bigsqcup_{\alpha\in\pi_1(X,b)} \mathsf{P}\widetilde{X}_b([c_b],\alpha)\Big)
= \bigoplus_{\alpha\in\pi_1(X,b)} \mathsf{C}^\square(\mathsf{P}\widetilde{X}_b)([c_b],\alpha).
\end{equation}

Next, by lifting necklaces in $X$ to $\widetilde{X}_b$, we obtain a chain complex isomorphism
\begin{equation}\label{equation: Cobar boxtimes direct sum pi_1}
\mathsf{Cobar}^{\boxtimes}(\mathcal{C}(X))(b,b)
\cong
\bigoplus_{\alpha\in\pi_1(X,b)} \mathsf{Cobar}^{\boxtimes}(\mathcal{C}(\widetilde{X}_b))([c_b],\alpha).
\end{equation}
More precisely, recall that $\mathbb{\Delta}^{d_1}\vee\dots\vee\mathbb{\Delta}^{d_k}$ is the wedge sum of $\mathbb{\Delta}^{d_1},\dots,\mathbb{\Delta}^{d_k}$ formed by successively identifying the last vertex of each simplex with the first vertex of the next.
Every necklace in $X$
\[
\boldsymbol{\sigma}\colon (\mathbb{\Delta}^{d_1}\vee\dots\vee\mathbb{\Delta}^{d_k},v_0,v_d) \to (X,b,b)
\]
uniquely lifts to a necklace in $\widetilde{X}_b$
\[
\widetilde{\boldsymbol{\sigma}}\colon (\mathbb{\Delta}^{d_1}\vee\dots\vee\mathbb{\Delta}^{d_k},v_0,v_d) \to (\widetilde{X}_b,[c_b],\pi^{-1}(b))
\]
such that $\pi_b\circ\widetilde{\boldsymbol{\sigma}}=\boldsymbol{\sigma}$, where $d=d_1+\dots+d_k$ and $v_i$ denotes the $(i+1)$-th vertex of $\mathbb{\Delta}^{d_1}\vee\dots\vee\mathbb{\Delta}^{d_k}$.  
The endpoint $\widetilde{\boldsymbol{\sigma}}(v_d)$ is uniquely determined by the based homotopy class of 
$\boldsymbol{\sigma}\circ \gamma$
for an arbitrary path $\gamma$ in $\mathbb{\Delta}^{d_1}\vee\dots\vee\mathbb{\Delta}^{d_k}$ from $v_0$ to $v_d$.  
Conversely, every necklace in $\widetilde{X}_b$ arises as the lift of a unique necklace in $X$.  
Thus, for each $k\ge0$ there is an $R$-module isomorphism
\[
R_b \boxtimes (s^{-1}\overline{\mathcal{C}(X)})^{\boxtimes k} \boxtimes R_b
\cong
\bigoplus_{\alpha\in\pi_1(X,b)} 
R_{[c_b]} \boxtimes (s^{-1}\overline{\mathcal{C}(\widetilde{X}_b)})^{\boxtimes k} \boxtimes R_{\alpha},
\]
and taken together, these induce the isomorphism \eqref{equation: Cobar boxtimes direct sum pi_1}.

By Remark \ref{remark: Adams original}, for any $\alpha\in\pi_1(X,b)$,
\[
\mathcal{A}_{\widetilde{X}_b} \colon \mathsf{Cobar}^{\boxtimes}(\mathcal{C}(\widetilde{X}_b))([c_b],\alpha) \to C^{\square}(\mathsf{P}\widetilde{X}_b)([c_b],\alpha)
\]
is a quasi-isomorphism.  
The proof of the proposition is then completed by observing that the isomorphisms \eqref{equation: cube chain path space direct sum pi_1} and \eqref{equation: Cobar boxtimes direct sum pi_1} are compatible with Adams' map.
\end{proof}

\section{Extension of the results to arbitrary spaces} 
\label{section: non simply connected results}
In this section we give the constructions of 
$\mathsf{C^P}(X)$, $\widetilde{It}_X$, $\widetilde{\mathcal{I}}_X$, $\widetilde{\mathcal{F}}_X$, $\widetilde{\mathcal{G}}_X$, and $\widetilde{\mathcal{H}}_X$ for any topological space $X$, thereby proving Theorem~\ref{theorem: A infinity map and homotopy with pi_1}. 
The constructions of $\mathsf{C^P}(X)$ and $\widetilde{It}_X$ are the key ingredients; once these are established, the remaining results follow by arguments parallel to those used for Theorem~\ref{theorem: A infinity map I} and Theorem~\ref{theorem: A infinity homotopy}.

We define the dg category $\mathsf{C^P}(X)$ as follows. The objects are the points of $X$. For any $a,b\in X$, the morphism complex is
\[
\mathsf{C^P}(X)(a,b)
 = \bigoplus_{\beta\in\Pi_1X(a,b)}
    \mathsf{Cobar}^{\prod}(C(\widetilde{X}_a))([c_a],\beta),
\]
where $\Pi_1X$ denotes the fundamental groupoid of $X$, $(\widetilde{X}_a,[c_a])$ is the universal covering space of $(X,a)$, and $\beta\in\Pi_1X(a,b)$ is viewed as a point of $\widetilde{X}_a$ via the canonical inclusion
\[
\Pi_1X(a,b)\hookrightarrow
 \bigcup_{c\in X}\Pi_1X(a,c)=\widetilde{X}_a.
\]
The composition of morphisms is induced by the map
\begin{align*}
    (R_{[c_a]}\otimes(s^{-1}C(\widetilde{X}_a))^{\otimes n}\otimes R_{\beta})\otimes (R_{[c_b]}\otimes(&s^{-1}C(\widetilde{X}_b))^{\otimes n'}\otimes R_{\gamma}) \\
    &\to R_{[c_a]}\otimes (s^{-1}C(\widetilde{X}_a))^{\otimes n+n'}\otimes R_{\beta\circ\gamma}\\
    (1_R\otimes x\otimes 1_R)\otimes (1_R\otimes y\otimes 1_R) &\mapsto 1_R\otimes(x\otimes\beta_\ast(y))\otimes 1_R
\end{align*}
for any $a,b,c\in X$, $\beta\in\Pi_1X(a,b)$ and $\gamma\in\Pi_1X(b,c)$, where $\beta_*$ is the map on the tensor factors of singular chains induced by the homeomorphism $\widetilde{X}_b\to \widetilde{X}_a$, $z\mapsto \beta z$.

For any $a,b\in X$, define $\widetilde{It}_X$ to be the comoposition of chain maps
\begin{equation}\label{equation: cube chain path space direct sum Pi_1X(a,b)}
    \mathsf{C}^{\square}(\mathsf{P}X)(a,b)
 \cong
 \bigoplus_{\beta\in\Pi_1X(a,b)}
 \mathsf{C}^{\square}(\mathsf{P}\widetilde{X}_a)([c_a],\beta)
 \to
 \mathsf{C^P}(X)(a,b),
\end{equation}
where the first map is defined in the same manner as \eqref{equation: cube chain path space direct sum pi_1}, and the second map is induced by $It_{\widetilde{X}_a}$ in \eqref{equation: It_X} on each direct summand.

\begin{remark}
The construction of $\mathsf{C^P}(X)$ and $It_X$ is motivated by an idea of Irie. 
Irie proposed a chain model for the free loop space (path space) and a version of the iterated integral map based on ``de Rham chains'' as a simplification of his construction in \cite{Irie}, which was established by the second-named author in \cite[Chapter 2]{Wang2023thesis}. 
This approach, unlike $\mathsf{C^P}$, uses the fundamental groupoid without referring to universal covering spaces.
\end{remark}

We extend $\widetilde{It}_X$ to an $A_\infty$-functor
\[
\widetilde{\mathcal{I}}_X=\{\widetilde{\mathcal{I}}_{X,k}\}_{k\ge1} \colon \mathsf{C}^{\square}(\mathsf{P}X) \to \mathsf{C^P}(X)
\]
with $\widetilde{\mathcal{I}}_{X,1}=\widetilde{It}_X$ as follows. Recall the cosimplicial space $[n]\mapsto\mathsf{P}^nX(a,b)$ in \eqref{equation: P^nX(a,b)} for $a,b\in X$. There is a sequence of canonical isomorphisms of chain complexes
\[
C(\mathsf{P}^nX(a,b))\cong \bigoplus_{\beta\in\Pi_1X(a,b)}C(\mathsf{P}^n\widetilde{X}_a([c_a],\beta)),\qquad n\ge0
\]
which is compatible with the cosimplicial structures, and induces an inclusion
\[ \bigoplus_{\beta\in\Pi_1X(a,b)}\mathrm{Tot}^{\prod}\big(\{C(\mathsf{P}^n\widetilde{X}_a([c_a],\beta))\}_{n\ge0}\big) \hookrightarrow \mathrm{Tot}^{\prod}\big(\{C(\mathsf{P}^nX(a,b))\}_{n\ge0}\big).
\]
For every $k\ge1$, the linear map $\mathcal{J}_{X,k}$ from Lemma~\ref{lemma: J_1 extends A infinity} factors through the above inclusion  with $a_0=a$, $a_k=b$. Then we define 
\[
\widetilde{\mathcal{I}}_{X,k}\coloneq\widetilde{AW}\circ\widetilde{ev}_*\circ\mathcal{J}_{X,k}\colon \bigotimes_{i=1}^k\mathsf{C}^{\square}(\mathsf{P}X)(a_{i-1},a_{i}) \to \mathsf{C^P}(X)(a_0,a_k),
\]
where the maps $\widetilde{\mathrm{ev}}_*$, $\widetilde{AW}$ are induced, on each direct summand, by
\begin{align*}
    \mathrm{ev}_* &\colon \mathrm{Tot}^{\prod}\big(\{C(\mathsf{P}^n\widetilde{X}_a([c_a],\beta))\}_{n\ge0}\big) \to \mathrm{Tot}^{\prod}\big(\{C(\{[c_a]\}\times(\widetilde{X}_a)^n\times\{\beta\})\}_{n\ge0}\big) \\
    \text{ and } AW&\colon \mathrm{Tot}^{\prod}\big(\{C(\{[c_a]\}\times(\widetilde{X}_a)^n\times\{\beta\})\}_{n\ge0}\big) \to \mathsf{Cobar}^{\prod}(C(\widetilde{X}_a))([c_a],\beta)
\end{align*}
as appearing in \eqref{equation: diagram J_X,1} and \eqref{equation: It_X original composition definition}, respectively.

We define the functor 
\[
\widetilde{\mathcal{G}}_X\colon\mathsf{Cobar}^{\boxtimes}(\mathcal{C}(X))\to\mathsf{C^P}(X)
\]
as follows. It acts as the identity on objects, and for $a,b\in X$ its action on morphisms is given by the composition of chain maps
\[
    \mathsf{Cobar}^{\boxtimes}(\mathcal{C}(X))(a,b)\cong\bigoplus_{\beta\in\Pi_1X(a,b)}\mathsf{Cobar}^{\boxtimes}(\mathcal{C}(\widetilde{X}_a))([c_a],\beta)
    \to \mathsf{C^P}(X),
\]
where the first map is defined in the same manner as \eqref{equation: Cobar boxtimes direct sum pi_1}, and the second map is induced by $\mathcal{G}_{\widetilde{X}_a}$ in \eqref{equation: G_X} on each direct summand.

We construct an $A_\infty$-homotopy $\widetilde{\mathcal{H}}_X$ between $\widetilde{\mathcal{F}}_X=\widetilde{\mathcal{I}}_X\circ\mathcal{A}_X$ and $\widetilde{\mathcal{G}}_X$ as follows. For any $k\ge1$ and any $k$-necklace in $X$
\[
\vec{\boldsymbol{\sigma}}\colon\mathbb{\Delta}^{\vec{\boldsymbol{d}}}\to X,\qquad \text{where }\vec{\boldsymbol{d}} = (\boldsymbol{d}_1, \dots, \boldsymbol{d}_k),\; \boldsymbol{d}_i = (d_{i,1}, \dots, d_{i,n_i}),
\]
as in \eqref{equation: multi necklace}, $\vec{\boldsymbol{\sigma}}$ uniquely lifts to a $k$-necklace in $\widetilde{X}_{\vec{\boldsymbol{\sigma}}(v_0)}$
\[
\widetilde{\vec{\boldsymbol{\sigma}}} \colon (\mathbb{\Delta}^{\vec{\boldsymbol{d}}},v_0)\to (\widetilde{X}_{\vec{\boldsymbol{\sigma}}(v_0)},[c_{\vec{\boldsymbol{\sigma}}(v_0)}])
\]
such that $\pi_{\vec{\boldsymbol{\sigma}}(v_0)}\circ\widetilde{\vec{\boldsymbol{\sigma}}}=\vec{\boldsymbol{\sigma}}$, where $v_0$ denotes the first vertex of $\mathbb{\Delta}^{\vec{\boldsymbol{d}}}$. Then we define
\[
\widetilde{\mathcal{H}}_X(\otimes_{i=1}^k\boxtimes_{j=1}^{n_i} s^{-1}\sigma_{i,j}) = \widetilde{\vec{\boldsymbol{\sigma}}}_*(\psi^{\vec{\boldsymbol{d}}})
\]
where $\psi^{\vec{\boldsymbol{d}}}\in\mathsf{Cobar}^{\prod}(C(\mathbb{\Delta}^{\vec{\boldsymbol{d}}}))(v_0,v_d)$ is provided in Lemma~\ref{lemma: homotopy Psi^d}.

By construction, $\widetilde{\mathcal{I}}_X$, $\widetilde{\mathcal{F}}_X$, $\widetilde{\mathcal{G}}_X$, and $\widetilde{\mathcal{H}}_X$ are natural in $X$. Theorem~\ref{theorem: A infinity map and homotopy with pi_1} follows.

\begin{remark}
There is a natural transformation 
\[
\pi\colon\mathsf{C^P}\Rightarrow\mathsf{Cobar}^{\prod}\circ C^{\mathbb{\Delta}}
\]
such that for any topological space $X$, $\pi_X$ is the identity on objects, and 
\[
\pi_X\colon\mathsf{C^P}(X)(a,b)
 \to \mathsf{Cobar}^{\prod}(C(X))(a,b),\qquad a,b\in X
\]
is induced by the universal covering map $\pi_a\colon(\widetilde{X}_a,[c_a])\to (X,a)$ on each direct summand. It is straightforward to check that 
$\mathcal{I}_X$, $\mathcal{F}_X$, $\mathcal{G}_X$ and $\mathcal{H}_X$ are the compositions of $\pi_X$ with $\widetilde{\mathcal{I}}_X$, $\widetilde{\mathcal{F}}_X$, $\widetilde{\mathcal{G}}_X$ and $\widetilde{\mathcal{H}}_X$, respectively. If $X$ is simply connected, then $\pi_X$ is an isomorphism of dg categories.
\end{remark}

It seems hard to formulate an analogue of Remark~\ref{remark: Chen Adams inverse simply connected} in the context here. Nonetheless, the following is true.

\begin{proposition}\label{proposition: widetilde G_X quasi equiv}
    For any topological space $X$, the functor $\widetilde{\mathcal{G}}_X$ is a quasi-equivalence.
\end{proposition}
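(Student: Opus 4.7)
The plan is to use the universal-cover decomposition built into the definitions of $\widetilde{\mathcal{G}}_X$ and $\mathsf{C^P}(X)$ to reduce the statement to the simply connected case, and then to extract the simply connected case from the identification of $\mathcal{G}_Y|_{\mathcal{C}^1}$ with a conormalization inclusion carried out at the end of Section~\ref{subsection: Chen Adams inverse simply connected}.

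Since $\widetilde{\mathcal{G}}_X$ is the identity on objects, essential surjectivity and the $H_0$-equivalence condition are automatic once one knows that $\widetilde{\mathcal{G}}_X$ is a quasi-isomorphism on every morphism complex. By construction, for $a,b\in X$ the map $\widetilde{\mathcal{G}}_X\colon\mathsf{Cobar}^{\boxtimes}(\mathcal{C}(X))(a,b)\to\mathsf{C^P}(X)(a,b)$ is the direct sum, indexed by $\beta\in\Pi_1X(a,b)$, of the maps $\mathcal{G}_{\widetilde{X}_a}$ at $([c_a],\beta)$, precomposed with the lifting isomorphism used in the proof of Theorem~\ref{theorem: Adams}. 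Since each $\widetilde{X}_a$ is simply connected, it suffices to prove that for any simply connected space $Y$ and any $p,q\in Y$, the chain map $\mathcal{G}_Y\colon\mathsf{Cobar}^{\boxtimes}(\mathcal{C}(Y))(p,q)\to\mathsf{Cobar}^{\prod}(C(Y))(p,q)$ is a quasi-isomorphism.

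I would then reduce the simply connected case to a diagonal pair $(b,b)$ via path translation. Fix a family of paths $\mathcal{O}_Y=\{\gamma_{pq}\}$ as in Section~\ref{section: Adams general}. Because $\mathsf{P}Y(p,q)$ is path-connected whenever $Y$ is simply connected, Theorem~\ref{theorem: Adams} implies that $[s^{-1}\gamma_{pq}]$ is invertible in $H_0(\mathsf{Cobar}^{\boxtimes}(\mathcal{C}(Y)))$; applying the dg functor $\mathcal{G}_Y$, the class $[\mathcal{G}_Y(s^{-1}\gamma_{pq})]$ is invertible in $H_0(\mathsf{Cobar}^{\prod}(C(Y)))$ as well. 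A standard dg-category argument then shows that left composition by these closed degree-zero elements defines chain-homotopy equivalences
\[
L^{\boxtimes}_{\gamma_{pq}}\colon\mathsf{Cobar}^{\boxtimes}(\mathcal{C}(Y))(q,q)\xrightarrow{\simeq}\mathsf{Cobar}^{\boxtimes}(\mathcal{C}(Y))(p,q),
\]
\[
L^{\prod,+}_{\gamma_{pq}}\colon\mathsf{Cobar}^{\prod}(C(Y))(q,q)\xrightarrow{\simeq}\mathsf{Cobar}^{\prod}(C(Y))(p,q),
\]
and functoriality of $\mathcal{G}_Y$ gives the strict intertwining $\mathcal{G}_Y\circ L^{\boxtimes}_{\gamma_{pq}}=L^{\prod,+}_{\gamma_{pq}}\circ\mathcal{G}_Y$. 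Hence it suffices to treat the case $p=q=b$ for a chosen basepoint $b\in Y$. The Proposition at the end of Section~\ref{subsection: Chen Adams inverse simply connected} asserts, for simply connected $Y$, that both $\mathsf{Cobar}^{\boxtimes}(\mathcal{C}^1(Y,b))\hookrightarrow\mathsf{Cobar}^{\boxtimes}(\mathcal{C}(Y))$ and $\mathcal{G}_Y|_{\mathcal{C}^1}\colon\mathsf{Cobar}^{\boxtimes}(\mathcal{C}^1(Y,b))\to\mathsf{Cobar}^{\prod}(C(Y))$ are quasi-equivalences; evaluating at $(b,b)$ and invoking two-out-of-three in the commuting triangle with $\mathcal{G}_Y$ then finishes the argument.

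I expect the main obstacle to be hidden in the Proposition of Section~\ref{subsection: Chen Adams inverse simply connected} that this reduction relies on. Its core input is the spectral-sequence comparison showing that $\mathsf{Cobar}^{\prod}(C^1(Y,b))\hookrightarrow\mathsf{Cobar}^{\prod}(C(Y))(b,b)$ is a quasi-isomorphism for simply connected $Y$; the tensor-degree filtration is unbounded in the product convention, so convergence requires care, though under simple connectivity the $E_2$-pages agree and the remainder mirrors the classical Adams argument.
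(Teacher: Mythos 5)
Your proof is correct and follows essentially the same route as the paper's (extremely terse) proof: reduce via the universal-cover decomposition built into $\widetilde{\mathcal{G}}_X$ to the simply connected case, and there invoke the end of Section~\ref{subsection: Chen Adams inverse simply connected}, where both $\mathsf{Cobar}^{\boxtimes}(\mathcal{C}^1(X,b))\hookrightarrow\mathsf{Cobar}^{\boxtimes}(\mathcal{C}(X))$ and $\mathcal{G}_X|_{\mathcal{C}^1}$ are shown to be quasi-equivalences, so $\mathcal{G}_X$ is one by two-out-of-three. Your explicit path-translation reduction to the pair $(b,b)$ is just the content of that two-out-of-three step made concrete; note, though, that you do not need the full strength of Theorem~\ref{theorem: Adams} for the invertibility of $[s^{-1}\gamma_{pq}]$, since Section~\ref{section: Adams general} exhibits $L^{\boxtimes}_{\gamma_{pq}}$ directly as a chain-homotopy equivalence with an explicit inverse and homotopy, and applying the strict dg functor $\mathcal{G}_Y$ transports this to the product side.
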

\begin{proof}
    We know from Section~\ref{subsection: Chen Adams inverse simply connected} that $\mathcal{G}_X$ is a quasi-equivalence for simply connected $X$. The lemma follows by applying this to $\widetilde{X}_a$ in the construction of $\widetilde{\mathcal{G}}_X$.
\end{proof}

Theorem~\ref{theorem: A infinity map and homotopy with pi_1} and Proposition~\ref{proposition: widetilde G_X quasi equiv} imply that:
\begin{corollary}
    For any topological space $X$:
    \begin{enumerate}
        \item The functor \[\mathcal{A}_X \colon \mathsf{Cobar}^{\boxtimes}(\mathcal{C}(X))\to \mathsf{C}^{\square}(\mathsf{P}X)\] induced by Adams' map is a quasi-equivalence if and only if the $A_\infty$-functor 
    \[
    \widetilde{\mathcal{I}}_X \colon \mathsf{C}^{\square}(\mathsf{P}X)\to\mathsf{C^P}(X)
    \] 
    motivated by Chen's iterated integral map is an $A_\infty$-quasi-equivalence.
    \item (Given Theorem~\ref{theorem: Adams}) The $A_\infty$-functor $\widetilde{\mathcal{I}}_X$ is an $A_\infty$-quasi-equivalence.
    \end{enumerate}
\end{corollary}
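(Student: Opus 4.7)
The plan is to combine the $A_\infty$-homotopy $\widetilde{\mathcal{H}}$ from Theorem~\ref{theorem: A infinity map and homotopy with pi_1} with Proposition~\ref{proposition: widetilde G_X quasi equiv} and the two-out-of-three property for $A_\infty$-quasi-equivalences, exploiting the factorization $\widetilde{\mathcal{F}}_X = \widetilde{\mathcal{I}}_X \circ \mathcal{A}_X$. In short, Proposition~\ref{proposition: widetilde G_X quasi equiv} tells me the ``reference'' natural transformation $\widetilde{\mathcal{G}}_X$ is a quasi-equivalence; the $A_\infty$-homotopy transports this property to the composition $\widetilde{\mathcal{F}}_X$; and two-out-of-three for compositions distributes this information between the two factors $\mathcal{A}_X$ and $\widetilde{\mathcal{I}}_X$.

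First I would verify the standard fact that two $A_\infty$-homotopic $A_\infty$-functors with the same object map induce the same map on the cohomology of every morphism complex and the same functor between the $H_0$-categories. The $n=1$ instance of the $A_\infty$-homotopy relation stated in Section~\ref{Section: acyclic model arguments} reduces to a chain-homotopy equation
\[
(\widetilde{\mathcal{F}}_X)_1 - (\widetilde{\mathcal{G}}_X)_1
 = m_1 \circ \widetilde{\mathcal{H}}_{X,1} + \widetilde{\mathcal{H}}_{X,1} \circ m_1,
\]
so $(\widetilde{\mathcal{F}}_X)_1$ and $(\widetilde{\mathcal{G}}_X)_1$ agree on cohomology of each Hom-complex. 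Since the object maps coincide, this means the induced functors on the $H_0$-categories are literally equal. Proposition~\ref{proposition: widetilde G_X quasi equiv} then forces $\widetilde{\mathcal{F}}_X$ to be an $A_\infty$-quasi-equivalence for every topological space $X$.

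Once this is in place, part (1) follows by the two-out-of-three property applied to the equality $(\widetilde{\mathcal{F}}_X)_1 = \widetilde{\mathcal{I}}_{X,1} \circ (\mathcal{A}_X)_1$ of chain maps on each morphism complex, together with the corresponding two-out-of-three statement for equivalences between $H_0$-categories (if $G \circ F$ and one of $G, F$ is an equivalence, so is the other). Thus $\mathcal{A}_X$ is a quasi-equivalence of dg categories if and only if $\widetilde{\mathcal{I}}_X$ is an $A_\infty$-quasi-equivalence. Part (2) is then immediate from part (1) together with Theorem~\ref{theorem: Adams}, which asserts that $\mathcal{A}_X$ is a quasi-equivalence for every topological space $X$.

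The argument is essentially formal given the work already done; the only item requiring any care is unpacking the $n=1$ and $n=2$ cases of the $A_\infty$-homotopy relation under the sign conventions fixed at the beginning of Section~\ref{Section: acyclic model arguments}, to confirm that the induced $H_0$-functors really coincide rather than differ by some sign. Beyond this bit of routine bookkeeping there is no substantive obstacle: all of the geometric and homotopical content has been absorbed into the earlier construction of $\widetilde{\mathcal{H}}$ and into Theorem~\ref{theorem: Adams}.
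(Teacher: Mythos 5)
Your proposal is correct and fills in exactly the argument the paper leaves implicit: Theorem~\ref{theorem: A infinity map and homotopy with pi_1} gives an $A_\infty$-homotopy $\widetilde{\mathcal{F}}_X\simeq\widetilde{\mathcal{G}}_X$, Proposition~\ref{proposition: widetilde G_X quasi equiv} makes $\widetilde{\mathcal{G}}_X$ a quasi-equivalence, and two-out-of-three applied to $\widetilde{\mathcal{F}}_X=\widetilde{\mathcal{I}}_X\circ\mathcal{A}_X$ (all functors being the identity on objects) gives part (1), with part (2) then following from Theorem~\ref{theorem: Adams}. Your identification of the $n=1$ component of the $A_\infty$-homotopy relation as the chain-homotopy equation is the right point to check under the paper's sign conventions, and it is fine.
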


\printbibliography

\end{document}